\tikzset{node distance=3cm, auto}
\numberwithin{equation}{section}
\theoremstyle{plain}
\newtheorem{theorem}[subsection]{Theorem}
\newtheorem{definition}[subsection]{Definition}
\newtheorem{proposition}[subsection]{Proposition}
\newtheorem{lemma}[subsection]{Lemma}
\newtheorem{remark}[subsection]{Remark}
\newtheorem{questioaffine formsn}[subsection]{Question}
\newtheorem{corollary}[subsection]{Corollary}
\newtheorem{claim}[subsection]{Claim}
\theoremstyle{definition}
\newcommand{\2}{{\bf 2}}
\newcommand{\mC}{{\mathbb C}}
\newcommand{\mE}{{\mathbb E}}
\newcommand{\mF}{\mathbb F}
\newcommand{\mN}{\mathbb N}
\newcommand{\mP}{\mathbb P}
\newcommand{\mZ}{{\mathbb Z}}
\newcommand{\ho}{\hookrightarrow}
\newcommand{\bo}{\omega}
\newcommand{\ep}{\epsilon}
\newcommand{\mcL}{\mathcal L}
\newcommand{\ti}{\tilde}
\newcommand{\emp}{\emptyset}
\begin{document}

\title{Polynomial functions as splines. }
\begin{abstract}

Let $V$ be a vector space over a finite  field $k$. We give a condition on a subset  $A \subset V$ that allows for a local criterion for   
checking when a function $f:A \to k$ is a restriction of a polynomial function of degree $<m$ on $V$. In particular, we show that  high {\em rank} hypersurfaces of $V$  of degree $\ge m$ satisfy this condition.  In addition we show that  the criterion is robust (namely locally testable in the theoretical computer science jargon).

\end{abstract}

\author{David Kazhdan}
\address{Einstein Institute of Mathematics,
Edmond J. Safra Campus, Givaat Ram 
The Hebrew University of Jerusalem,
Jerusalem, 91904, Israel}
\email{david.kazhdan@mail.huji.ac.il}

\author{Tamar Ziegler}
\address{Einstein Institute of Mathematics,
Edmond J. Safra Campus, Givaat Ram 
The Hebrew University of Jerusalem,
Jerusalem, 91904, Israel}
\email{tamarz@math.huji.ac.il}


\maketitle

\section{introduction}
Let $V$ be a vector space over a field $k$. A classical fact is that if $k$ is a prime field, a function  $f:V \to k$ is a polynomial of degree $< m$ if and only if it "vanishes on $m$-dimensional cubes", namely for all $x, h_1, \ldots, h_m \in V$ we have,
\[
(*)_f \quad \sum_{\omega \in \{0,1\}^m}(-1)^{|\omega|}f(x+ \sum_i \omega_i h_i )  =0,
\]
where $|\omega| = \sum_{i=1}^m \omega_i$.  
For example, a function is linear if and only if it vanishes on $2$-dimensional cubes, namely for all  $x, h_1, h_2 \in V$ we have, 
\[
f(x)-f(x+h_1)-f(x+h_2)+f(x+h_1+h_2)=0. 
\]
As is shown in  \cite{akklr, jprz, KR} this characterization is robust -  namely 
there exists a constant $C(m)$, such that any function $f$ for which $(*)_f$ holds for all but an $\epsilon < \epsilon(m)$ proportion of the  $m$-dimensional cubes in $V$  is $C(m)\epsilon$-close to a polynomial of degree $<m$.   In the theoretical computer science jargon this is referred to 
as - low degree polynomials on a vector space $V$ are {\it locally testable}.  The proof in  \cite{akklr} heavily relies on the existence of a group structure on $V$.  

In this paper we provide  a condition  on a subset  $A\subset  V$ which guarantees local testability of polynomiality.

Our first result is the existence of functions $C(m)$ and $\ep(m)$
such that  for any subset $A\subset V$ of given density and {\it (Gowers)$m$-uniformity} (see Definition \ref{uniform}), any function $f:A \to k$  such that $(*)_f$ holds for all but an $\ep \leq \epsilon (m)$ proportion of $m$-dimensional cubes in $A$,
there exists a polynomial $g:V \to k$ of degree $<m$ whose restriction to $A$ is equal to $f$ for all but $C(m) \epsilon$ proportion of points of $A$.

\begin{remark}This question is non trivial even in the case when $\epsilon=0$.  
\end{remark}
An important example (see \cite{gt, kl}) of  $m$-uniform subsets of $V$, are ones of the form $X(k)$ where $X$ is defined by  a  system  of  high {\em rank}  (see Definition \ref{alg} below) of homogeneous polynomials equations $\{P_i(v)=0\}_{i=1}^c$ where $P_i$ are of degrees  $\ge m$.

We also consider subsets of $V$ of the form  $X(k)$,  without the assumption that the degrees of the defining polynomials are $\ge m$. In the case when $X(k)$ is of  {\it high rank} (depending on the codimension $c$ and the degrees of the $P_i$), we show that the set $X(k)$ has the following
property:  for any function $f:X(k) \to k$ vanishing on all but an $\epsilon \leq \epsilon (m)$ proportion of $m$-dimensional cubes with {\em in  $X(k)$},
there exists a function $g:X(k) \to k$ which vanishes on {\em all}  $m$-dimensional cubes in $X(k)$, and  coincides with $f$ for all but $C(m)\epsilon $ proportion of points on $X(k)$.

\begin{remark}
 In \cite{kz-extension} we show by different methods the existence of a polynomial  $P$ on $V$ of degree $\leq m$ such that $P_{|X}=f$.
\end{remark}

In the case when $X(k)$ is not of high rank we can prove a weaker result, where the condition on the smallness of  $\epsilon$  depends on the finite field $k$ and on the codimension of $X$,. 

\begin{remark}
We prove analogous results for functions taking values in an arbitrary abelian group.
\end{remark}
 We expect our results to be useful for obtaining quantitative bounds for the inverse theorem for the $U_m$-Gowers  norms over finite fields \cite{btz,tz,tz-2}: we expect that  methods from additive combinatorics can be used to reduce the inverse theorem for the $U_m$-Gowers norms to questions of polynomial testing and polynomial extensions on high rank varieties.

\subsection{Definitions and and statement of results}
Let $V$ be a vector space over a field $k$.  An $m$-cube  in a vector space $V$ is a collection $(u|\bar v),u\in V,\bar v\in V^m$ of $2^m$ points 
$\{ u+\sum _{i=1}^m \bo _iv_i\}$, $\bo _i\in \{0 ,1\}$.

For any map  $f:V\to H$ where $H$ is an abelian group we denote by $f_m$ the map from the set $C_m(V)$ of $m$-cubes to $H$ given by 
$$f_m(u|\bar v)=\sum _{\bar \bo \in \{ 0 ,1\}^m}(-1)^{|w|}f(u+\sum _{i=1}^m\bo _iv_i)$$
where $|\bo|=\sum _{i=1}^m\bo _i$.  For a subset  $X \subset V$ we denote 
 $C_m(X)$ the set of $m$-cubes in $V$ with all vertices in $X$. Note that in the case that $H=k$, where $k$ a prime field, functions $f:V \to k$ such that 
 $f_m$ vanishes on $C_m(G)$ are precisely polynomials of degree $<m$.

\begin{definition}[Gowers norms \cite{gowers}]\label{uniform} For a function $g: V \to \mC$ we define the norm $\|g\|_{U_m} $ by
\[
\|g\|^{2^m}_{U_m} = \mE_{x,v_1, \ldots v_m\in V} \prod_{\omega \in \2^m} g^{\omega}(x+\omega \cdot \bar v),
\]
where $g^{\omega}=g$ 
if $|\omega|$ is even and $g^{\bo}=\bar g$ otherwise.  We say that $X \subset V$ is {\em $(\epsilon, m)$-uniform} if $\|1_X-\mE1_X\|_{U_m}<\epsilon$.  \\
\end{definition}

\begin{theorem}\label{extending-uniform}  For any 
$\delta>0$ there exists $\epsilon = \epsilon (\delta)$ such that for any $(\epsilon,m)$-uniform subset $X$ of $V$  of
 density $>\delta$   and a function $f:X \to H$   such that
 $f_m|_{C_m(X)} \equiv 0$,   there exists a function $h:V \to H$ with $h_m \equiv 0$ such that $h|_X=f|_X$.  Moreover we can take $\epsilon=\delta^{O_m(1)}$.
\end{theorem}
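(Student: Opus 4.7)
The plan is to construct the extension $h$ by \emph{cube completion}. For each $y\in V$, call $\bar v=(v_1,\dots,v_m)\in V^m$ an \emph{$X$-completion of $y$} if $y+\omega\cdot\bar v\in X$ for every $\omega\in\{0,1\}^m\setminus\{0\}$; given such a $\bar v$, the identity $f_m(y|\bar v)=0$ (which one wishes to hold) forces the candidate value
\[
h_{\bar v}(y):=-\sum_{\omega\neq 0}(-1)^{|\omega|}f(y+\omega\cdot\bar v)\in H.
\]
Expanding $\mathbf{1}_X=\delta+(\mathbf{1}_X-\delta)$ inside $\prod_{\omega\neq 0}\mathbf{1}_X(y+\omega\cdot\bar v)$ and applying the Gowers--Cauchy--Schwarz inequality controlled by the $(\epsilon,m)$-uniformity of $X$, the number of $X$-completions of $y$ is $(\delta^{2^m-1}+O_m(\epsilon/\delta^{O_m(1)}))|V|^m$ for all but an $O_m(\epsilon/\delta^{O_m(1)})$-fraction of $y\in V$.

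The heart of the argument is a \emph{uniqueness lemma}: $h_{\bar v}(y)$ does not depend on the completion $\bar v$. For $\bar v,\bar v'$ differing only in the first coordinate, direct computation gives
\[
h_{\bar v}(y)-h_{\bar v'}(y)=f_m(y+v_1\mid v'_1-v_1,v_2,\dots,v_m),
\]
and the cube on the right has all $2^m$ vertices in $X$ --- they are exactly the $\omega_1=1$ non-$y$ vertices of $(y|\bar v)$ together with those of $(y|\bar v')$ --- so it vanishes by the hypothesis $f_m|_{C_m(X)}\equiv 0$. For arbitrary pairs of completions $\bar v,\bar v'$ of $y$, I would interpose a random auxiliary completion $\bar v''$ and travel $\bar v\to\bar v''\to\bar v'$ along a chain of single-coordinate swaps whose intermediate tuples remain $X$-completions of $y$; the existence of such a $\bar v''$ for almost every pair is a further Gowers--Cauchy--Schwarz estimate on a higher-dimensional cube pattern in $X$. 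This defines $h\colon V\to H$ unambiguously.

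The agreement $h|_X=f|_X$ is then immediate: for $y\in X$, any completion $\bar v$ satisfies $(y|\bar v)\in C_m(X)$, and $f_m(y|\bar v)=0$ rearranges to $f(y)=h_{\bar v}(y)=h(y)$.

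Finally, to verify $h_m\equiv 0$ on all of $C_m(V)$, observe that whenever $\bar v\in V^m$ is a \emph{simultaneous} $X$-completion of every vertex $y_\omega=u+\omega\cdot\bar w$ of a given cube $(u|\bar w)$, then
\[
h_m(u|\bar w)=\sum_{\omega}(-1)^{|\omega|}h_{\bar v}(y_\omega)=-\sum_{\omega'\neq 0}(-1)^{|\omega'|}f_m(u+\omega'\cdot\bar v\mid\bar w),
\]
and each inner $f_m$ vanishes because $(u+\omega'\cdot\bar v\mid\bar w)\in C_m(X)$ for $\omega'\neq 0$. The existence of such a common $\bar v$ follows from the pattern estimate
\[
\mathbb{E}_{\bar v}\prod_{\omega\in\{0,1\}^m,\,\omega'\neq 0}\mathbf{1}_X(u+\omega\cdot\bar w+\omega'\cdot\bar v)=\delta^{2^m(2^m-1)}+O_m(\epsilon/\delta^{O_m(1)}),
\]
which is positive once $\epsilon\le\delta^{O_m(1)}$.

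The main obstacle is the uniqueness lemma: establishing that for almost every $y$, the graph whose vertices are $X$-completions of $y$ and whose edges link completions differing in a single coordinate (with intermediate cube inside $X$) is essentially connected. This requires carefully tracking the intersection of several cube-patterns against the sparse set $X$. Once uniqueness is secured, the remaining steps reduce to the same style of Gowers--Cauchy--Schwarz estimate, and the quantitative bound $\epsilon=\delta^{O_m(1)}$ emerges from collecting the powers of $\delta$ lost at each invocation.
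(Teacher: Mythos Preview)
Your overall strategy---define $h$ by cube completion, prove uniqueness, then verify $h_m\equiv 0$---is exactly the paper's, but the final step as you have written it contains a genuine gap.

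The problem is the pattern estimate you invoke to find a \emph{single} $\bar v$ that is a simultaneous $X$-completion of all $2^m$ vertices of a fixed cube $(u|\bar w)$. For fixed $u,\bar w$, the affine forms $\{u+\omega\cdot\bar w+\omega'\cdot\bar v:\omega\in\2^m,\ \omega'\neq 0\}$ in the variables $\bar v$ are \emph{degenerate}: any two forms with the same $\omega'$ but different $\omega$ have the same linear part $\omega'\cdot\bar v$ and differ only by the constant $\omega\cdot\bar w$. A system with repeated linear parts has CS complexity at least $2^m-1$ (each of the $2^m-1$ parallel copies must sit in its own cell of any admissible partition), so the Gowers--Cauchy--Schwarz inequality gives no control from $(\epsilon,m)$-uniformity. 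Concretely, your asserted identity $\mE_{\bar v}\prod_{\omega,\omega'\neq 0}1_X(u+\omega\cdot\bar w+\omega'\cdot\bar v)=\delta^{2^m(2^m-1)}+O_m(\epsilon/\delta^{O_m(1)})$ is false for general $u,\bar w$: already for $m=2$ it reduces to a statement about the density of $\bigcap_{\omega}(X-\omega\cdot\bar w)$, which is not governed by $\|1_X-\delta\|_{U_2}$ for individual $\bar w$.

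The paper's remedy (Proposition~\ref{hmzero} together with Lemma~\ref{admisible}) is not to look for a common completion but to use a \emph{coordinated family} of completions, one per vertex: for the vertex $a_0+\nu\cdot\bar a$ take the completion $\bar y^0+\nu_1\bar y^1+\cdots+\nu_m\bar y^m$. The resulting system, now in the $m(m{+}1)$ variables $(\bar y^0,\ldots,\bar y^m)$, has all linear parts pairwise distinct and complexity $\le m$, so $(\epsilon,m)$-uniformity does control it. Your telescoping identity for $h_m$ still works with these varying completions (with the roles of $\omega$ and $\nu$ interchanged appropriately), but the existence step genuinely requires the extra variables.

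A similar caution applies to your uniqueness chain. If you swap coordinates one at a time through a random $\bar v''$, the conditions that each intermediate tuple $(v_1'',\ldots,v_k'',v_{k+1},\ldots,v_m)$ be an $X$-completion again produce, as forms in $\bar v''$ with $\bar v$ fixed, many forms sharing the linear part $v_1''$ (all those with $\omega_1=1$ and varying $\omega_2,\ldots,\omega_m$). The paper handles this step differently (Proposition~\ref{constant}): it introduces an auxiliary $\bar w$ and writes $F_a(\bar v)-F_a(\bar u)$ as an alternating sum of $f_m$ on the $2m$ cubes $(a+u_i|w_1,\ldots,w_{i-1},w_i-u_i,u_{i+1},\ldots,u_m)$ and their $v$-analogues; the resulting system in $(\bar u,\bar v,\bar w)$ has complexity $\le m$, and one then shows the fibers of the natural projections are essentially equidistributed. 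Either adopt this decomposition, or check carefully that your chaining system, viewed in \emph{all} of $\bar v,\bar v',\bar v''$ simultaneously, really has complexity $\le m$---it is not automatic.
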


We say that a property $P$ is satisfied $\epsilon$-a.e. $x\in X$ if it is satisfied for $(1-\epsilon )|X|$ elements $x \in X$.

\begin{theorem}\label{testing-uniform}
Let $m \ge 1$.  There exist  $\alpha , B, C>0$ depending on $m$ such that the following holds:  For any $\delta>0$,  any $0<\epsilon< \alpha$, $\eta<(\epsilon \delta)^{B}$, any  $(\eta, m)$-uniform subset $X$ of  $V$ of density $\delta$  and any $f:X \to H$ with $f_m(c)=0$ for $\epsilon$-a.e.  $c \in C_m(X)$, there exists a  function $h:V \to H$  such that $h_m\equiv 0$, and  $h (x)=f(x)$ on $C \epsilon$ a.e. $x \in X$. \end{theorem}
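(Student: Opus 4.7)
The plan is to reduce the robust statement Theorem \ref{testing-uniform} to the exact statement Theorem \ref{extending-uniform} via a self-correction step in the spirit of AKKLR/BLR. I will construct from $f$ a function $\tilde f : X \to H$ satisfying (i) $\tilde f = f$ on $X$ outside a set of density $O(\epsilon)$, and (ii) $\tilde f_m$ vanishes on \emph{every} cube of $C_m(X)$. Applying Theorem \ref{extending-uniform} to $\tilde f$ (whose hypotheses hold since $X$ is still $(\eta,m)$-uniform for $\eta$ a sufficiently small power of $\delta$) produces the desired $h : V \to H$ with $h_m \equiv 0$ and $h|_X = \tilde f$; combined with (i) this gives $h = f$ on $(1 - C\epsilon)|X|$ points. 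The role of $(\eta, m)$-uniformity of $X$ is to replace the group-averaging on the whole vector space used in the classical local-testability argument by averaging over $m$-cubes whose vertices are constrained to lie in $X$.

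Define $\tilde f$ by plurality. For $x \in X$ call $\bar v \in V^m$ \emph{$x$-admissible} if all $2^m$ vertices $x + \omega \cdot \bar v$, $\omega \in \{0,1\}^m$, lie in $X$; for such $\bar v$ the relation $f_m(x|\bar v) = 0$ would force the value
$$ T_{\bar v} f(x) \;:=\; -\sum_{\omega \neq 0} (-1)^{|\omega|} f(x + \omega \cdot \bar v), $$
and I set $\tilde f(x)$ equal to the most frequent value of $T_{\bar v} f(x)$ as $\bar v$ ranges over $x$-admissible tuples. Expanding $1_X = \delta + g_X$ with $\|g_X\|_{U_m} < \eta$ and using Gowers--Cauchy--Schwarz, the number of $x$-admissible tuples is $(1 \pm o(1)) \delta^{2^m - 1} |V|^m$ for $(1-o(\epsilon))$-a.e.\ $x \in X$. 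A Markov argument applied to the $\epsilon$-fraction of bad cubes in $C_m(X)$ then gives (i): outside a set of density $O(\epsilon/\delta^{2^m - 1})$, a strict majority of $x$-admissible cubes are good, so $\tilde f(x) = f(x)$.

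The hard step is (ii). The key preliminary is a stability statement: for most $x \in X$, every $x$-admissible $\bar v$ outside a $o(1)$-fraction satisfies $T_{\bar v} f(x) = \tilde f(x)$. This is obtained by a two-sample coincidence argument: for independent admissible $\bar v, \bar v'$ one uses the cube-vanishing hypothesis on $f$ together with the uniformity of $X$ (via iterated Cauchy--Schwarz to reduce the two-cube count to the one-cube count controlled by $\|1_X\|_{U_m}$) to conclude $T_{\bar v} f(x) = T_{\bar v'} f(x)$ on a $(1 - O(\epsilon^{c_m}))$-fraction of pairs, which by a bucketing argument upgrades to the claimed stability. With stability in hand, fix $(u|\bar w) \in C_m(X)$ and sample $\bar v \in V^m$ so that, with positive probability, $\bar v$ is simultaneously admissible for every vertex $y_\omega = u + \omega \cdot \bar w$ and satisfies $\tilde f(y_\omega) = T_{\bar v} f(y_\omega)$ for all $\omega$; positivity follows again from Gowers-norm estimates on the relevant configuration. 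Re-indexing the sum,
$$ \tilde f_m(u|\bar w) \;=\; -\sum_{\omega' \neq 0} (-1)^{|\omega'|} f_m(u + \omega' \cdot \bar v \,|\, \bar w), $$
and each summand on the right is $f_m$ evaluated at a cube in $C_m(X)$, which vanishes for $(1 - \epsilon)$-a.e.\ choice of $\bar v$. Choosing $\bar v$ in the positive-probability event where all these conditions hold simultaneously forces the deterministic left-hand side to equal $0$.

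The main obstacle throughout is the two-sample coincidence lemma and the analogous multi-cube configuration estimates: converting the hypothesis "few bad cubes in $C_m(X)$" into the perfect statement "$\tilde f_m \equiv 0$ on $C_m(X)$" requires iterated Cauchy--Schwarz and careful bookkeeping of error terms with explicit polynomial dependence on $\delta, \epsilon, \eta$, and this is what determines the constants $\alpha, B, C$ appearing in the theorem. Once (i) and (ii) are established, the final invocation of Theorem \ref{extending-uniform} is immediate.
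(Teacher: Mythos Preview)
Your self-correction plan is in the spirit of the paper's argument, but as written it has two genuine problems.

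\textbf{Circularity.} You close by invoking Theorem~\ref{extending-uniform}, but in this paper that theorem is \emph{deduced from} Theorem~\ref{testing-uniform} (Section~\ref{follows}); there is no independent proof of it, so your argument is circular. The paper avoids the loop by defining the corrected function $h$ directly on all of $V$, not only on $X$: for every $a\in V$ (not just $a\in X$) the set $Y_a=\{\bar v:(a|\bar v)'\in C'_m(X)\}$ is large, and $h(a)$ is taken to be the plurality value of $F_a(\bar v)=-T_{\bar v}f(a)$ over $Y_a$. Proposition~\ref{hmzero} then shows $h_m\equiv 0$ on $C_m(V)$ directly, and no extension step is needed.

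\textbf{The configuration in step (ii) is not controlled by $(\eta,m)$-uniformity.} To force $\tilde f_m(u|\bar w)=0$ you sample a \emph{single} $\bar v\in V^m$ admissible for all vertices $y_\omega=u+\omega\cdot\bar w$ simultaneously. The points you then need in $X$ are $\{u+\omega\cdot\bar w+\omega'\cdot\bar v:\omega\in\2^m,\ \omega'\in\2^m\setminus\{0\}\}$, i.e.\ the vertices of a $2m$-cube. For an \emph{arbitrary fixed} $(u|\bar w)\in C_m(X)$ there is no reason this set of $\bar v$ is nonempty: already the density of $\{t\in V:t+u+\omega\cdot\bar w\in X\text{ for all }\omega\}$ can be far from $\delta^{2^m}$ for a particular $\bar w$, and $(\eta,m)$-uniformity of $X$ does not prevent this. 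The paper's Proposition~\ref{hmzero} overcomes exactly this obstacle by replacing the single $\bar v$ with a more elaborate system of shifts $\bar y^0+\nu\cdot(\bar y^1,\dots,\bar y^m)$, one linked tuple per vertex of $(a|\bar a)$, and the heart of the matter is Lemma~\ref{admisible}, which shows that this larger system still has CS-complexity $\le m$. Only then do the counting lemma (Lemma~\ref{counting}) and the $(\eta,m)$-uniformity hypothesis suffice to give, for \emph{every} cube $(a|\bar a)$, a nonempty set of good parameters.

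A smaller issue: the bound $O(\epsilon/\delta^{2^m-1})$ in your step (i) would not give the $\delta$-free constant $C=C(m)$ the theorem asserts. Done correctly the $\delta$-powers cancel (since every $Y_a$ has size $(\delta^{2^m-1}+O(\eta))|V|^m$, the projection $(a,\bar v)\mapsto a$ is essentially $1$-homogeneous) and one gets $O(\epsilon)$; but keeping the constants $\delta$-free throughout your ``two-sample coincidence'' step is itself delicate, and is the content of the inductive Lemma~\ref{A} in the paper.
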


\begin{remark} By the monotonicity of the Gowers norms, the Theorem \ref{testing-uniform} holds for any $f$ satisfying $f_d(c)=0$ for $\epsilon$-a.e.$c \in C_d(X)$ for any $d \le m$. When $m$ is much larger than $d$ ($>d2^d)$ the result can be obtained with a simpler argument. 
\end{remark}

 \begin{definition}[Rank]\label{alg}
\begin{enumerate}
 \item Let $P:V \to k$ be a polynomial of degree $d$. We define the rank $r^{d}(P)$ as the minimal number $r$ such that $P$ can be written as a sum $P=\sum _{j=1}^rQ_jR_j$ where $Q_j,R_j$ are polynomials of degees $<d$ defined over $k$. Often we write $r(P)$ instead of $r^d(P)$.
\item Let $\bar P=\{ P_i\}, 1\leq i\leq c $ be a  family of polynomials of degree $d$. We define $r(\bar P)$ as the minimal $d$-rank of non-trivial linear combinations of $P_i$.
\item Given any family $\bar P=\{ P_i\},1\leq i\leq c, \deg(P_i)\geq 2$ we write it as a disjoint union $\bar P =\bigcup _{j=2}^d \bar P ^j$ where $\bar P ^j$ is a family of polynomials of degree $j$. We define $r(\bar P):=\min _jr^{j}(\bar P ^j).$ 
\end{enumerate}
\end{definition}

Let $k$ be a finite field of size $q$. 
Let $V$ be a $k$-vector space and  $X$ a subvariety  of degree $d$  which is a complete intersection of  codimension $L$, with all  defining polynomials of degree $> m$. In the appendix we show that $X$ is of density $\ge q^{-O_{d,L}(1)}$, and by \cite{bl} (Theorem 4.8) for any $s>0$ there is $r=r(k,d,L)$ such that if the rank of $X$ is $>r$ then $X$ is $(q^{-s}, m)$-uniform. The following result is an application of Theorem \ref{extending-uniform}:

\begin{corollary}\label{high-rank} Let $k$ be a finite field, and let $d \ge m> 0, L>0$. There exists $r=r(k,d, L)>0$  such that for any $k$-vector space $V$ any   subvariety $X$   of rank $>r$, degree $d$  which is a complete intersection of  codimension $L$, with all  defining polynomials of degree $\ge m$ and any function  $f:X \to k$  such that
 $f_m|_{C_m(X)} \equiv 0$, there exists a function $h:V \to k$ with $h_m \equiv 0$ such that $h|_X=f|_X$. 
\end{corollary}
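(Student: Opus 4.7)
The plan is to derive the corollary by stringing together Theorem \ref{extending-uniform} with the two algebraic inputs quoted in the paragraph preceding the statement: the lower bound on the density of $X$ supplied by the appendix, and the $m$-uniformity of high-rank complete intersections supplied by \cite{bl} Theorem 4.8. The argument is essentially a parameter chase that exploits the quantitative form $\epsilon(\delta) \ge \delta^{O_m(1)}$ in Theorem \ref{extending-uniform}.

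First I would fix the density. The appendix yields a constant $N = N(d, L)$ with $|X(k)|/|V| \ge q^{-N}$; set $\delta := q^{-N}$, which depends only on $k, d, L$. Plugging this $\delta$ into Theorem \ref{extending-uniform} produces a uniformity threshold $\epsilon_0 = \epsilon(\delta) > 0$ satisfying $\epsilon_0 \ge \delta^{O_m(1)} = q^{-O_{d, L, m}(1)}$. Any $(\epsilon_0, m)$-uniform subset of $V$ of density $\ge \delta$ will then admit the required extension of $f$ to all of $V$.

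Next I would upgrade the high-rank hypothesis into the required uniformity. Pick an integer $s$ with $q^{-s} < \epsilon_0$; since $\epsilon_0 \ge q^{-O_{d, L, m}(1)}$, the choice of $s$ depends only on $k, d, L, m$. Applying \cite{bl} Theorem 4.8 with this $s$ --- the hypotheses of which are met because every defining polynomial of $X$ has degree $\ge m$ --- we obtain $r = r(k, d, L)$ such that whenever the rank of $X$ exceeds $r$, the set $X(k)$ is $(q^{-s}, m)$-uniform, and in particular $(\epsilon_0, m)$-uniform.

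All hypotheses of Theorem \ref{extending-uniform} (with target group $H = k$) are now verified for $X(k)$, so it produces the desired $h : V \to k$ with $h_m \equiv 0$ and $h|_X = f|_X$. There is essentially no obstacle: the genuine content lies in Theorem \ref{extending-uniform}, while the corollary amounts to checking that the rank threshold $r$ is allowed to depend on $k, d, L$, which is exactly what the chain of parameter choices $\delta \mapsto \epsilon_0 \mapsto s \mapsto r$ produces.
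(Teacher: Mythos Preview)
Your proposal is correct and follows precisely the approach indicated in the paper: the paragraph preceding Corollary \ref{high-rank} explains that the density bound from the appendix and the $(q^{-s},m)$-uniformity from \cite{bl} Theorem 4.8 feed directly into Theorem \ref{extending-uniform}, and you have simply spelled out the parameter chase $\delta\mapsto\epsilon_0\mapsto s\mapsto r$ in detail. One small remark: your choice of $s$ depends on $m$ as well as on $k,d,L$, so a priori $r=r(k,d,L,m)$; to match the stated dependence $r=r(k,d,L)$ you should note that since $1\le m\le d$ one may take the maximum over these finitely many values of $m$.
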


We also prove a splining result for subvarieties $X\subset V$ where $V$  is a finite-dimensional vector space over a finite field $k=\mF _q$, which is independent of rank. We use this result in \cite{kz-extension}. 

\begin{theorem}[Splining on $X$]\label{testing-X-intro}
For any $m,d, L>0$ there exists positive real numbers $A, B$ depending on $d,L,m$, such that the following holds: for any  complete intersection $X\subset V$   of degree $d$ codimension $L$,  any $0<\epsilon<q^{-A}$, and any function $f:X\to H$ such that 
$f_m$ vanishes $\epsilon$-a.e on $C_m(X)$ 
 there exists a function $h: X \to H$ such that $h_m|_{C_m(X)}\equiv  0$ and $h(x)=f(x)$ for 
$q^{B} \epsilon$ a.e $x \in X$. 
\end{theorem}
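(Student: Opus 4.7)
The plan is to induct on the lexicographic complexity $(e_{\max}, L)$ of the defining family $\bar P = \{P_i\}_{i=1}^L$ of $X$, where $e_{\max} = \max_i \deg P_i$. In the base case $\bar P$ has rank exceeding a threshold $r_0 = r_0(d,L,m)$; then \cite{bl} Theorem~4.8 ensures $X$ is $(q^{-s},m)$-uniform for any chosen $s$, and the appendix provides $|X|/|V| \geq q^{-O_{d,L}(1)}$. With these in hand, Theorem~\ref{testing-uniform} applied with $\delta = q^{-O_{d,L}(1)}$ and $\eta = q^{-s}$ for $s$ sufficiently large produces a function $g: V \to H$ with $g_m \equiv 0$ globally and $g = f$ on $q^B\epsilon$-a.e.\ of $X$; setting $h := g|_X$ proves the base case, and in fact yields the stronger conclusion that $h_m$ vanishes on all of $C_m(V)$.

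For the inductive step, suppose $\bar P$ has rank $\leq r_0$. By Definition~\ref{alg}, some nontrivial linear combination $P = \sum_i c_i P_i$ of a fixed-degree subfamily $\bar P^e \subset \bar P$ admits a decomposition $P = \sum_{j=1}^{r_0} Q_j R_j$ with $\deg Q_j, \deg R_j < e$. Choose $c_{i_0}\neq 0$, replace $P_{i_0}$ by $P$, and refine $X$ by stratifying according to the auxiliary polynomials $Q_j, R_j$:
\[
X = \bigsqcup_{\bar a,\bar b} X_{\bar a,\bar b}, \qquad X_{\bar a,\bar b} := X \cap \{Q_j = a_j, R_j = b_j : 1 \leq j \leq r_0\},
\]
indexed by $(\bar a,\bar b) \in k^{2r_0}$ satisfying $\sum_j a_j b_j = 0$. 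Each $X_{\bar a,\bar b}$ is a complete intersection of strictly smaller lex complexity (its maximum defining degree is $< e \leq e_{\max}$), so the inductive hypothesis yields a spline $h_{\bar a,\bar b} : X_{\bar a,\bar b} \to H$ with vanishing $m$-cubes on $C_m(X_{\bar a,\bar b})$ and error $q^{B_0}\epsilon_{\bar a,\bar b}$, where $\epsilon_{\bar a,\bar b}$ denotes the failure density of $f_m$ on $C_m(X_{\bar a,\bar b})$.

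Paste these local splines into a global $h : X \to H$ by $h|_{X_{\bar a,\bar b}} := h_{\bar a,\bar b}$. Averaging the local errors $\epsilon_{\bar a,\bar b}$ against the relative stratum measures (controlled by a Schwartz--Zippel type bound) returns the global estimate $h = f$ on $q^B \epsilon$-a.e.\ of $X$, with the absolute exponents $A,B$ depending only on $d, L, m$: the induction terminates in at most $dL$ steps and each step loses only a bounded power of $q$.

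The principal obstacle is verifying $h_m \equiv 0$ on $m$-cubes of $X$ whose vertices lie in several distinct strata; the piecewise construction directly controls only within-stratum cubes. The resolution uses that the stratifying polynomials $Q_j, R_j$ have degree strictly less than $e \leq d$, whence their own $m$-cube identities impose rigid constraints on how an $m$-cube in $X$ may straddle strata; combined with the local vanishing of $f_m$ propagated through the induction, a separate consistency lemma shows that the $h_{\bar a, \bar b}$ admit a common polynomial representative—e.g.\ obtained by applying Corollary~\ref{high-rank} to a high-rank refinement of the $Q_j, R_j$—and this representative serves as $h$, with $h_m = 0$ on all of $C_m(X)$.
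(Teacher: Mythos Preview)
Your inductive strategy has two genuine gaps. First, the induction parameter does not decrease as claimed: the low-rank witness lies in some degree-$e$ subfamily $\bar P^e$, but $e$ need not equal $e_{\max}$, and the remaining $P_i$ with $i\neq i_0$ stay in the defining family of each stratum $X_{\bar a,\bar b}$ with their original degrees, so $e_{\max}$ does not drop. What actually decreases is the full degree multiset in lex order, but then the codimension jumps to $L-1+2r_0$, and since your rank threshold $r_0$ must itself grow with the current codimension (to feed Theorem~\ref{testing-uniform} in the base case), the iteration is a regularization of tower-function type whose bookkeeping you have not supplied; as written there is no reason the final $A,B$ depend only on the original $d,L,m$. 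Second, and more seriously, the patching step is not a proof. You correctly flag that cubes in $C_m(X)$ straddle strata, but your ``separate consistency lemma'' is asserted rather than argued, and the invocation of Corollary~\ref{high-rank} is illegitimate: that corollary requires all defining polynomials to have degree $\geq m$, whereas your stratifiers $Q_j,R_j$ have degree $<e$ with no relation to $m$. Nothing in the proposal forces the local splines $h_{\bar a,\bar b}$ to glue into a single function with $h_m\equiv 0$ on cross-stratum cubes. You have also not explained how the global hypothesis on $C_m(X)$ controls the within-stratum densities $\epsilon_{\bar a,\bar b}$, since $C_m(X_{\bar a,\bar b})$ is only a (potentially tiny) fraction of $C_m(X)$.

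The paper's proof bypasses rank entirely. The key input is Proposition~\ref{solutions-X} (Corollary~\ref{solutions-X1}) from the appendix, valid for \emph{every} complete intersection of bounded degree and codimension: for any finite collection of base points in $X$, the set of $v\in V$ translating all of them back into $X$ has density $\geq q^{-n(d,m,L)}$. This crude lower bound replaces uniformity in the argument of Theorem~\ref{testing-uniform}: one shows directly that $F_a(\bar v):=f_m(a|\bar v)-f(a)$ is constant $q^{O(1)}\epsilon$-a.e.\ on $Y_a$ via the same cube-cancellation identity, using only that the relevant fiber maps $S\to C_m(X)$ have fibers of size between $q^{-O(1)}|V|^{2m-1}$ and $|V|^{2m-1}$ (Lemma~\ref{ae-projection1} suffices; no near-homogeneity is needed). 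Defining $h(a)$ as the common value, the same solution-counting gives $h_m|_{C_m(X)}\equiv 0$ and $h=f$ on $q^{O(1)}\epsilon$-a.e.\ of $X$, all with constants depending only on $d,L,m$.
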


\begin{theorem}[Subspace splining on $X$]\label{testing-subspace-X-intro}
Let $m,d, L>0$. There exists an $A, B >0$ depending on $d,L,m$, such that the following holds: for any vector space $V$ over $k$, any complete intersection $X \subset V$ of degree $d$, codimension $L$ and  a function $f:X\to k$ such that  the restriction of $f$ to $\epsilon$-a.e  affine subspace of dimension $l=\lceil \frac{m}{q-q/p}\rceil$ is a polynomial of degree $<m$, where  $\epsilon < q^{-A}$,  there exists a function $h: X \to k$ such that the restriction of $h$ to any affine subspace of dimension $l$ is a polynomial of degree $<m$, and $h(x)=f(x)$ for
$q^{B} \epsilon$ a.e $x \in X$.
\end{theorem}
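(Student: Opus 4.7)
The plan is to deduce Theorem \ref{testing-subspace-X-intro} from Theorem \ref{testing-X-intro} by replacing the subspace polynomiality test by an equivalent finite family of cube-like identities. The key ingredient I would establish first is the following characterization: a function $g\colon \mF_q^l\to k$ is a classical polynomial of degree $<m$ if and only if it satisfies a finite list of linear identities $\sum_{y} c_y^{(i)}g(y)=0$, each supported on a single $m$-cube (or a short combination of $m$-cubes) contained in $\mF_q^l$. The choice $l=\lceil m/(q-q/p)\rceil$ is precisely the smallest dimension at which such a finite list suffices to cut out the classical polynomials of degree $<m$: for $k=\mF_p$ the list collapses to the single $m$-cube identity, while for $q=p^s$ additional identities are needed to exclude non-classical polynomials.

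Using this, the subspace hypothesis on $f$ translates directly into vanishing of $f$ against each of these cube-like identities on almost every cube in $X$. A fibered counting argument over $l$-dimensional affine subspaces of $X$ containing a given cube, controlled by the density estimates for complete intersections from the appendix, shows that the error rate degrades only by a factor of $q^{O_{d,L,m}(1)}$. I would then apply (the proof of) Theorem \ref{testing-X-intro}: the argument there is local in nature and adapts to any fixed finite family of cube-supported identities in place of the single $m$-cube identity. This produces a function $h\colon X\to k$ such that $h|_A$ satisfies all cube-like identities on every $l$-dim affine subspace $A\subset X$, and therefore is a classical polynomial of degree $<m$ on $A$, while $h$ agrees with $f$ on $q^B\epsilon$-a.e.\ $x\in X$.

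The main obstacle is the first step: establishing a sharp finite family of cube-supported identities characterizing classical polynomials of degree $<m$ on $\mF_q^l$ with $l=\lceil m/(q-q/p)\rceil$, arranged in a form compatible with the testing framework of Theorem \ref{testing-X-intro}. The secondary technical point is to verify that the arguments behind Theorem \ref{testing-X-intro} carry over to arbitrary finite cube-supported identities with the same quantitative control on $\epsilon$ and the constants $A,B$.
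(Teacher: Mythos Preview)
Your proposal is correct and matches the paper's approach: the paper does not prove Theorem \ref{testing-subspace-X-intro} in detail, saying only that it ``can be derived in a similar manner adapting the proof of Theorem \ref{testing-X-intro} below to the arguments in \cite{KR}''. One minor point: the identities from \cite{KR} characterizing classical degree-$<m$ polynomials over $\mF_q$ are supported on the full $l$-dimensional affine subspace (all $q^l$ points) rather than on Boolean $m$-cubes, so in the adaptation the role of $C_m(X)$ is played by the set of affine $l$-flats contained in $X$.
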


\begin{corollary} Let $m,d, L>0$.
There exists an $A=A(d,L,m) > 0$ such that the following holds: Let $X \subset V(k)$ be  a  complete intersection of degree $d$, codimension $L$. Then for any function $f : X \to  k$ such that the restriction of $f$ 
to any affine subspace of dimension  $\lceil \frac{m+1}{q-q/p}\rceil$ is a  polynomial of degree $m$, and the restriction of $f$ to $q^{-A}$ almost any  affine subspace 
of dimension $l = \lceil \frac{m}{q-q/p}\rceil$  is a polynomial of degree $< m$,  the restriction of $f$ 
to any affine subspace is a polynomial of degree $< m$.
\end{corollary}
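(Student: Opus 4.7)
The plan is to first apply Theorem~\ref{testing-subspace-X-intro} to produce a function $h$ on $X$ that is polynomial of degree $<m$ on every $l$-dimensional affine subspace and agrees with $f$ almost everywhere, and then use the stronger hypothesis on $l'$-dimensional subspaces (with $l':=\lceil (m+1)/(q-q/p)\rceil$) to upgrade the almost-everywhere agreement $h\approx f$ to pointwise equality $h=f$ on $X$.

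Concretely, I would set $\epsilon:=q^{-A}$ with $A$ large enough that Theorem~\ref{testing-subspace-X-intro} applies; the second hypothesis of the corollary then supplies $h:X\to k$ with (i) $h|_W$ polynomial of degree $<m$ on every affine subspace $W$ of dimension $l:=\lceil m/(q-q/p)\rceil$, and (ii) $h=f$ outside a subset of $X$ of relative density at most $q^{B-A}$. Since $l'\in\{l,l+1\}$, property (i) already handles the case $l'=l$; in the remaining case $l'=l+1$, I would invoke the classical tight local characterization of polynomials of degree $<m$ over $\mF_q$ --- a function on $\mF_q^{l+1}$ whose restriction to every $l$-dimensional affine subspace is polynomial of degree $<m$ is itself polynomial of degree $<m$ --- to upgrade (i) to: $h|_{W^*}$ is polynomial of degree $<m$ on every $l'$-dimensional affine subspace $W^*$.

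I would then fix an arbitrary $x_0\in X$ and pick an $l'$-dimensional affine subspace $W^*\ni x_0$; by the first hypothesis of the corollary, $f|_{W^*}$ is polynomial of degree $\le m$, so $g:=f-h$ is polynomial of degree $\le m$ on $W^*$. By (ii), $g$ vanishes on a $(1-q^{B-A})$-fraction of $W^*$; for $A$ large enough that $q^{B-A}$ lies below the relative distance of the degree-$m$ Reed--Muller code on $\mF_q^{l'}$, the Schwartz--Zippel/Reed--Muller bound forces $g\equiv 0$ on $W^*$, hence $f(x_0)=h(x_0)$. Since $x_0\in X$ was arbitrary, $f=h$ on $X$, and $f$ inherits from $h$ the desired polynomial behaviour on every affine subspace. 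The main obstacle I expect is the promotion step: it rests on the tight local characterization of degree-$<m$ polynomials at the critical dimension $l$, which is the same combinatorial input driving Theorem~\ref{testing-subspace-X-intro}; invoking it here for functions defined only on $X$ will require some care but no essentially new idea, and the rest is a routine bookkeeping of the constant $A$ (large enough for both Theorem~\ref{testing-subspace-X-intro} and the Reed--Muller threshold).
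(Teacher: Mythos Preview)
The paper states this Corollary immediately after Theorem~\ref{testing-subspace-X-intro} without proof; it is evidently meant as a direct consequence, and your strategy---apply Theorem~\ref{testing-subspace-X-intro} to obtain $h$, then use the $l'$-dimensional hypothesis together with a Reed--Muller minimum-distance bound to upgrade $h=f$ a.e.\ to $h=f$ everywhere on $X$---is exactly the intended one.

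There is, however, a genuine gap in your sketch at the step ``fix $x_0\in X$ and pick an $l'$-dimensional affine subspace $W^*\ni x_0$.'' You cannot take an arbitrary $W^*$: the function $g=f-h$ is only defined on $X$, so you need $W^*\subset X$; and knowing $g$ vanishes on a $(1-q^{B-A})$-fraction of $X$ does \emph{not} by itself give that $g$ vanishes on a $(1-q^{B-A})$-fraction of any particular $W^*$. What is needed is (i) the abundance result from the appendix (Lemma~\ref{b} and its iterates, or Lemma~\ref{analog}), which guarantees that through every $x_0\in X$ there pass at least $q^{-O_{d,L,l'}(1)}|V|^{l'}$ affine $l'$-flats lying entirely in $X$; and (ii) an averaging/Fubini argument over this family of flats to locate one on which the bad set $\{f\ne h\}$ has density at most $q^{O_{d,L,m}(1)}\cdot q^{B-A}$. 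This is the step where the constant $A$ must absorb an $O_{d,L,m}(1)$ loss, and indeed it is the only place in the argument where the complete-intersection hypothesis on $X$ is used at all.

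By contrast, the ``promotion step'' you flag as the main obstacle---passing from $h$ polynomial of degree $<m$ on every $l$-flat to the same on every $l'$-flat---is immediate once $W^*\subset X$: every $l$-dimensional affine subspace of $W^*$ then also lies in $X$, so $h|_{W^*}$ satisfies the Kaufman--Ron local characterization on $\mF_q^{l'}$ and is therefore a genuine polynomial of degree $<m$ on $W^*$. So your concern is misplaced: the delicate point is not the promotion but the selection of a good $W^*$ inside $X$.
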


In the high rank case we have a stronger result:

\begin{theorem}[Splining on $X$ high rank]\label{testing-X-high}
Fix $m>0$. There exist $\alpha, C>0$ (depending on $m$) such 
that  for any $\epsilon <\alpha$ for any $q,d,L$ there exists $r=r(q,d, L,m, \epsilon)>0$ such that  the following holds.
Let $V$ be an $\mF _q$-vector space, 
$X\subset V$  be a  subvariety which is a  complete intersection of  codimension $L$,  of degree $d$ and  rank $>r$ and let
 $f:X \to H$ be a map with $f_m(c)=0$ for $\epsilon$-a.e.  $c \in C_m(X)$. Then there exists a  function $h:X \to H$  such that $h_m\equiv 0$, and  $h (x)=f(x)$ on $C \epsilon$ a.e. $x \in X$. For $d>char(k)$, we can have $r=r(q,d, L,m,\epsilon)=r(d, L,m, \epsilon)$. 
 \end{theorem}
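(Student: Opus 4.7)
The plan is to reduce Theorem \ref{testing-X-high} directly to Theorem \ref{testing-uniform} by exploiting the fact that high rank complete intersections are highly Gowers uniform. The conclusion required here is in fact \emph{weaker} than what Theorem \ref{testing-uniform} delivers: that theorem produces $h:V\to H$ with $h_m\equiv 0$ globally on $C_m(V)$, and since $C_m(X)\subseteq C_m(V)$, the restriction $h|_X$ automatically satisfies $h_m|_{C_m(X)}\equiv 0$. So it suffices to verify the hypotheses of Theorem \ref{testing-uniform} for the set $X$.

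Two structural inputs are needed. From the appendix (as stated just before Corollary \ref{high-rank}), the density of $X$ in $V$ satisfies $\delta \ge q^{-O_{d,L}(1)}$. From \cite{bl} (Theorem 4.8), for every $s>0$ there is a rank threshold $r_0=r_0(q,d,L,s)$ such that rank exceeding $r_0$ implies $\|1_X-\mathbb{E}1_X\|_{U_m}<q^{-s}$, i.e.\ $X$ is $(q^{-s},m)$-uniform. Let $\alpha(m),B(m),C(m)$ be the constants from Theorem \ref{testing-uniform}. I would take the $\alpha,C$ of the present theorem to be these, and given $\epsilon<\alpha$, choose $s$ large enough that $q^{-s}<(\epsilon\delta)^B$; combined with the density bound this pins $s$ down as a function of $q,d,L,m,\epsilon$. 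Setting $r=r_0(q,d,L,s)$ then ensures that $X$ meets both the uniformity and density hypotheses of Theorem \ref{testing-uniform}, which applied to $f$ produces $h:V\to H$ with $h_m\equiv 0$ and $h(x)=f(x)$ for $C\epsilon$ a.e.\ $x\in X$; restricting $h$ to $X$ gives the required function.

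For the sharpening $r=r(d,L,m,\epsilon)$ in the range $d>\mathrm{char}(k)$, one replaces the Bhowmick--Lovett threshold by a characteristic-large analogue in which the rank bound forcing $(q^{-s},m)$-uniformity does not depend on $q$; this rests on the fact that in this characteristic regime algebraic and analytic rank are polynomially comparable with constants independent of $q$, so the passage from large algebraic rank to small $\|1_X-\mathbb{E}1_X\|_{U_m}$ is uniform in $q$. The rest of the parameter chase is unchanged, so the resulting rank threshold depends only on $d,L,m,\epsilon$. I do not expect a genuine analytic obstacle: the only care required is the bookkeeping of matching the Gowers-uniformity hypothesis $\eta<(\epsilon\delta)^B$ of Theorem \ref{testing-uniform} against the output of the rank--uniformity link and ensuring the density factor does not degrade the $\epsilon$-dependence qualitatively.
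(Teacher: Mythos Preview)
Your reduction to Theorem \ref{testing-uniform} breaks down precisely because Theorem \ref{testing-X-high} does \emph{not} assume the defining polynomials all have degree $\geq m$. When some $P_i$ has degree $<m$, the set $X$ can never be made $(\eta,m)$-uniform by raising the rank. Concretely, write $g=e_q(P_i)$; since $\deg P_i<m$ we have $\|g\|_{U_m}=1$, and the Gowers--Cauchy--Schwarz inequality together with the identity $\sum_{\omega\neq 0}(-1)^{|\omega|}P_i(x+\omega\cdot\bar h)=-P_i(x)$ gives
\[
\|1_X-\mE 1_X\|_{U_m}\ \ge\ \big|\mE_x[(1_X(x)-\mE1_X)\,e_q(-P_i(x))]\big|\ =\ \mE1_X\cdot\big(1-\mE_x e_q(-P_i(x))\big),
\]
which for high rank $P_i$ is $\gtrsim q^{-L}$. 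Thus the hypothesis $\eta<(\epsilon\delta)^B$ of Theorem \ref{testing-uniform} is unattainable, and the paper explicitly flags this in the introduction (``While we cannot extend $f$ to $V$ \ldots''). Your argument is essentially the content of Corollary \ref{high-rank} (the case $\deg P_i\ge m$), but it does not prove the theorem as stated.

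The paper's route is to rerun the Section 4 argument (Propositions \ref{constant} and \ref{hmzero}) \emph{inside} $X$, replacing every appeal to $(\eta,m)$-uniformity by a direct high-rank estimate on fiber sizes. The point is that the proof of Theorem \ref{testing-uniform} never uses uniformity per se---only that certain projections $A\to C_m(X)$, $B\to C_m(X)'$ have fibers of essentially constant size. For a high-rank complete intersection these fibers are themselves cut out by systems of multilinear equations in the defining forms of $X$, and Lemmas \ref{quadratic} (degree $2$) and \ref{general} (degree $d$, via Theorem \ref{equi}) show that such systems have $(q^{-C}+O(q^{-\Omega(r)}))|V|^{\bullet}$ solutions for almost all choices of the fixed parameters. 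This replaces Lemma \ref{counting} and yields $h:X\to H$ with $h_m\equiv 0$ on $C_m(X)$, without ever extending to $V$. The $q$-independence of $r$ when $\mathrm{char}(k)>d$ then comes from the corresponding clause of Theorem \ref{equi}, not from a black-box bias-versus-rank statement.
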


 In section \ref{follows}
we show that Theorem \ref{extending-uniform} follows from Theorem  \ref{testing-uniform}. Now
we describe the proof of Theorem \ref{testing-uniform} which follows the lines of the  proof of polynomial splining for vector spaces over finite fields \cite{akklr}.
 We show that for any given $x \in V$  the function $F_x(\bar v):=f_m(x|\bar v)-f(x)$ is constant for almost all 
$\bar v$ such that $(x|\bar v)' \in C_m(X)'$, where $C_m(X)'$ denotes the set of almost cubes in $X$ (see Definition \ref{def-cubes}),  
and  {\it almost all}  depends on the uniformity of $X$. While in the case that $X$ is a vector space (\cite{akklr}) this is straight forward, in the case when $X$ is a uniform set it becomes rather tricky, and the  key insight is that uniformity gives control of the sizes of fibers of various maps between subsets of $X^M$.  Using this  {\it almost constancy} we define a  function $h$ on $V$ as the {\it essential value} of $F_x(\bar v)$. Next we use   the uniformity of $X$  to show that $h_m$ vanishes on $C_m(V)$. Again, in the case when $X$ is a vector space this is straight forward, while in the case of uniform varieties much less so.     Finally we show  that $h=f$ a.e. on $X$. 

 Theorem \ref{testing-X-intro} is proved in a similar manner, but without of the assumption of the  uniformity of $X$  we can not extend $h$ from $X$ to $V$. 
 The key feature we use is the abundance of solutions to various systems of equations. We  derive the abundance from the uniformity in the  case of  Theorem  \ref{testing-uniform}, and from a general result about existence of many solutions for  some system of equations (see Proposition
  \ref{solutions-X}). Theorem  \ref{testing-X-high} is proved along the lines of Theorem  \ref{testing-uniform}. While we cannot extend $f$ to $V$ we can fix it within $X$ in a way where the bound on $\epsilon$ does not depend on the density using the fact that in the high rank case we still have good control on the  
sizes of fibers of various maps between subvarieties of $X^M$. This result is close in spirit to the results in \cite{gt, kl} where it is shown that if $P$ is a polynomial of degree $d$ and $\epsilon$-a.e  we have $P= \Gamma(Q_1, \ldots, Q_M)$, where 
$Q_1, \ldots Q_M$ is a high rank collection of polynomials of degrees $<d$, then if $\epsilon$ is sufficiently small then actually   $P \equiv \Gamma(Q_1, \ldots, Q_M)$.  

 \subsection*{Acknowledgement}
 The second author is supported by ERC grant ErgComNum 682150. Part of the material in this paper is based upon work  supported by the National Science Foundation under Grant No. DMS-1440140 while the second author was in residence at the Mathematical Sciences Research Institute in Berkeley, California, during the Spring 2017 semester.
We thank the anonymous referee for offering simplified proofs for some Lemmas in the paper. 

\section{Complexity of linear systems}\label{algebra-lemmas}

We start with some notations related to cubes: 
 \begin{definition}\label{def-cubes}
 For any  $m\in \mZ_{>0}$ we 
 define $\2^m :=\{0,1\}^m$. We define $| \bo | =\sum _1^m\bo _i ,\bo \in \2^m$ and say that $\bo$ is even (odd) if   $| \bo | $ is even (odd). 
 Let $V$ be a vector space over a finite field $k$.  For any $\bo \in \2^m, \bar v =(v_1, \ldots, v_m ) \in V^m$ we define 
$\omega \cdot \bar v :=\sum _1^m \bo _iv_i\in V$.

 For any  $u\in V,\bar v =( v_1, \ldots, v_m )\in V^m$ we denote by $\phi _{(u|\bar v )}:2^m\to V$ the map  given by 
$$\phi _{(u|\bar v )}(\bo):=u+\omega \cdot \bar v $$
and denote by  $(u|\bar v)\subset V$ the image of the $\phi _{(u|\bar v )}$
 and  by $(u|\bar v)'\subset V$ the image of the restriction of  $\phi _{(u|\bar v )}$ to $2^m \setminus \{ 0\}$. We say that  the subsets of $V$ of the form $(u|\bar v)$ are {\it $m$-cubes} and that  subsets 
$V$ of the form $(u|\bar v)'$ are {\it almost cubes}.

For a subset  $X \subset V$ we denote by
 $C_m(X)$ the set of $m$-cubes in $V$ with all vertices in $X$ and  $C'_m(X)$ the set of {\it almost cubes} in $V$ with all vertices in $X$.

Let $H$ be an abelian group.  For any  $H$-valued  function $f$ on $X$ we denote by $f_m$ the function on $C_m(X)$  defined by
\[
f_m(u|\bar v) =\sum_{\omega \in \2^m}(-1)^{|\omega|} f(u+\omega\cdot \bar v)
\]
and  by $f'_m$ the function on $C'_m(X)$
 defined by
\[
f'_m(u|\bar v) =\sum_{\omega \in \2^m \setminus \{ 0\}}(-1)^{|\omega|} f(u+\omega\cdot \bar v).
\]
 We say that $c \in C_m(X)$ is {\em good for $f$} if $f_m(c)=0$. 
Given a function $f:X\to H$ we write $f_m(X)=0$ if all $c\in C_m(X)$ are good for $f$.
\end{definition}

We will need to manipulate various systems of linear forms. 
The following is a notion of complexity of linear forms introduces in \cite{gt} (up to shifter index). 

  \begin{definition}[CS complexity \cite{gt}]
Let $\bar r =\{ r_i\}_{i \in I}$ be a family of affine maps $r_i: V^r \to V$ of the form  $r_i(\bar v) = \sum a_{ij}v_j+w_i, a_{ij}\in \mZ$. 
 Say that $\bar r$ is of {\em CS complexity $\le d$} at $j \in I$ if we can partition $I\setminus \{j\}$ to $ d$ sets so that
 $r_j$ is not in the affine span of any set.  Say that $\bar r$ is of {\em CS complexity $\le d$}  if it is of CS complexity $\le d$ at any $j\in I$. 
 If $\bar r$ is of complexity $\le d$ at $j$, we call a partition of $I\setminus \{j\}$ to $ d$ sets so that
 $r_j$ is not in the affine span of any set an {\em admissible $d$-partition}. 
\end{definition} 

 \begin{remark}
 a) The complexity of an affine system is the same as that of its linear part. b) The complexity of a sub collection of linear  forms is bounded by the complexity of the full collection. 
\end{remark}

\begin{proposition}[\cite{gt}] 
Let $\bar r =\{ r_i\}_{i \in I}$ be a family of affine maps $r_i: V^r \to V$,  $r_i(\bar v) = \sum a_{ij}v_j+w_i, a_{ij}\in \mZ$. 
If $\bar r$ is of {\em CS complexity $\le d$} then for any $f_i:V \to \mathbb C$, $\|f_i\|_{\infty} \le 1$, $i\in I$
 \[
| \mE_{\bar  v \in V^r}\prod_{i\in I} f_i(r_i(\bar v))| \le \|f_j\|_{U_d}.
 \]
\end{proposition}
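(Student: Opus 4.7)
The plan is to establish this bound, a version of the Green--Tao generalized von Neumann inequality, via iterated Cauchy--Schwarz along the admissible $d$-partition. By the remark preceding the statement we may replace each $r_i$ by its linear part, so that $r_i(\bar v)=\sum_k a_{ik}v_k$ with $a_{ik}\in\mZ$; fix $j$ and an admissible partition $I\setminus\{j\}=I_1\sqcup\cdots\sqcup I_d$ witnessing CS complexity $\le d$ at $j$.

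The core step is a single application of Cauchy--Schwarz that replaces a system of forms of CS complexity $\le s$ at $j$ with a system of CS complexity $\le s-1$ at a chosen copy of $j$, at the cost of squaring the absolute value of the expectation and doubling the function $f_j$. Concretely, because $r_j\notin \mathrm{span}\{r_i\}_{i\in I_s}$, there is a rational vector $b\in\mQ^r$ such that $a_i\cdot b=0$ for $i\in I_s$ while $a_j\cdot b\neq 0$. A change of variables separating the $b$-direction writes $\bar v=(y,\bar z)$ with $y\in V$ so that $r_i(\bar v)$ depends only on $\bar z$ for every $i\in I_s$, while $r_j$ depends non-trivially on $y$. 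Writing
$$\Bigl|\mE_{\bar v}\prod_{i\in I}f_i(r_i(\bar v))\Bigr|^2=\Bigl|\mE_{\bar z}\,G(\bar z)\,F(\bar z)\Bigr|^2\le \mE_{\bar z}|F(\bar z)|^2,$$
where $G(\bar z)=\prod_{i\in I_s}f_i(r_i(\bar z))$ satisfies $\|G\|_\infty\le 1$ and $F(\bar z)=\mE_y\prod_{i\in I\setminus I_s}f_i(r_i(y,\bar z))$, then expanding $|F|^2$ as an expectation over an independent pair $(y,y')$ and setting $h_s=y'-y$ produces a new expectation over $(\bar z,y,h_s)$. In this new expectation each form $r_i$ with $i\in I\setminus I_s$ is doubled to the pair $(r_i,\,r_i+\alpha_i h_s)$, where $\alpha_i=a_i\cdot b$; in particular $r_j$ acquires a genuine $h_s$-shift since $\alpha_j\neq 0$.

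Iterating this step $d$ times, following the partition $I_d,\,I_{d-1},\ldots,I_1$, reduces the $2^d$-th power of the original expectation to an expression of the form
$$\mE_{\bar z^{(d)},\,h_1,\ldots,h_d}\prod_{\omega\in\2^d} f_j^{(\omega)}\bigl(R(\bar z^{(d)})+\omega_1 h_1+\cdots+\omega_d h_d\bigr),$$
where $R$ is a nonzero linear form in the remaining variables and $f_j^{(\omega)}\in\{f_j,\bar f_j\}$ according to the parity of $|\omega|$. Since $R(\bar z^{(d)})$ is equidistributed in $V$, this expectation is exactly $\|f_j\|_{U_d}^{2^d}$ by Definition \ref{uniform}, and taking $2^d$-th roots yields the claimed inequality.

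The main obstacle is the careful bookkeeping at each Cauchy--Schwarz step: one has to verify that in the doubled system there is a chosen copy of $j$ for which a reorganised admissible partition into $s-1$ parts still exists, and that the fresh direction $h_s$ introduced at step $s$ is linearly independent, in the enlarged variable set, of $h_1,\ldots,h_{s-1}$, so that after all $d$ iterations the arguments of $f_j$ trace out an honest $d$-dimensional parallelepiped in $V$. Both properties follow from $r_j\notin\mathrm{span}\{r_i\}_{i\in I_s}$ at each step, as worked out in \cite{gt}.
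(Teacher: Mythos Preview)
The paper does not give its own proof of this proposition: it is stated with a citation to \cite{gt} and used as a black box. Your sketch is the standard iterated Cauchy--Schwarz argument from \cite{gt} and is essentially correct; the only point worth tightening is that the directions $b^{(s)}$ can all be chosen in the \emph{original} variable space (since $r_j\notin\mathrm{span}\{r_i:i\in I_s\}$ refers to the original forms), which makes the bookkeeping you flag at the end immediate---the $h_s$ are then genuinely new variables and the final parallelepiped structure falls out after rescaling each $h_s$ by $\alpha_j^{(s)}\neq 0$.
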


\begin{lemma}\label{double} Let $S$ be a system of $r_1,\ldots,  r_k$, $s_1,\ldots,  s_m$ be  of linear forms  in $\bar x$ and let  $t_1,\ldots,  t_m$
be non zero  linear forms in $\bar y$. Suppose the system $T=\{r_1(\bar x),\ldots,  r_k(\bar x), s_1(\bar x)+t_1(\bar y),\ldots,  s_m(\bar x)+t_m(\bar y)\}$ 
is of complexity $\le d$, and the system $\{s_1(\bar x)+t_1(\bar y),\ldots,  s_m(\bar x)+t_m(\bar y)\}$ is of complexity $\le d-1$. Then the system 
\[
S=\{r_1(\bar x),\ldots,  r_k(\bar x), s_1(\bar x)+t_1(\bar y),\ldots,  s_m(\bar x)+t_m(\bar y), s_1(\bar x)+t_1(\bar y'),\ldots,  s_m(\bar x)+t_m(\bar y')\}
\]
of linear forms in $\bar x, \bar y, \bar y'$ is of complexity $\le d$.  
\end{lemma}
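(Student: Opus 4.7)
My plan is to verify the defining property of complexity $\le d$ for $S$ by exhibiting, for each form $f \in S$, an admissible $d$-partition of $S \setminus \{f\}$ at $f$. The argument splits into two cases according to whether $f$ is one of the $\bar x$-forms $r_j$, or one of the ``joined'' forms $s_j + t_j(\bar y)$ (the case $f = s_j + t_j(\bar y')$ being completely symmetric under the swap $\bar y \leftrightarrow \bar y'$).

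For $f = r_j$ I would start from an admissible $d$-partition $(P_1, \ldots, P_d)$ of $T \setminus \{r_j\}$ at $r_j$, which exists by complexity $\le d$ of $T$, and let every $\bar y'$-form follow its $\bar y$-twin:
$$P'_l := P_l \cup \{\, s_i + t_i(\bar y') \ :\ s_i + t_i(\bar y) \in P_l \,\}.$$
If an affine combination $r_j = \sum \alpha_s r_s + \sum \beta_i (s_i + t_i(\bar y)) + \sum \gamma_i (s_i + t_i(\bar y'))$ from $P'_l$ existed, matching the $\bar y$- and $\bar y'$-components would force $\sum \beta_i t_i = \sum \gamma_i t_i = 0$, so collapsing $\bar y'$ onto $\bar y$ would yield $r_j = \sum \alpha_s r_s + \sum (\beta_i + \gamma_i)(s_i + t_i(\bar y))$ as an affine combination from $P_l$ inside $T$, contradicting admissibility of $(P_l)$.

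For $f = s_j + t_j(\bar y)$ I would instead invoke the stronger hypothesis on the subsystem: pick an admissible $(d-1)$-partition $(Q_1, \ldots, Q_{d-1})$ of $\{s_i + t_i(\bar y) : i \ne j\}$ at $s_j + t_j(\bar y)$, and set $P'_l := Q_l$ for $l < d$ together with
$$P'_d := \{r_1, \ldots, r_k\} \cup \{\, s_i + t_i(\bar y') \ :\ 1 \le i \le m \,\}.$$
Admissibility of $P'_l$ for $l < d$ is immediate from that of $(Q_l)$; for $P'_d$, any supposed affine combination equaling $s_j + t_j(\bar y)$ would have vanishing $\bar y$-component on the right, forcing $t_j = 0$ and contradicting $t_j \ne 0$.

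The main obstacle I would have to resolve, and the point that dictates which hypothesis is used in which case, is how to distribute the $\bar y'$-forms between the parts. Tracking each $\bar y'$-form alongside its $\bar y$-twin works cleanly when testing an $\bar x$-form because the collapse step is then available, but the same distribution would create the bad combination when testing an $s + t(\bar y)$-form; so in that second case the $\bar y'$-forms must instead be completely segregated into a single block together with all of the $r$-forms, which is exactly the construction that needs $t_j \ne 0$ to rule out admissibility violations in that block and the $(d-1)$-complexity hypothesis on the $\bar y$-only subsystem to cover the remaining $d-1$ parts.
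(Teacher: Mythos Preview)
Your proof is correct and follows the same overall strategy as the paper: split on whether the chosen form is an $r_j$ or an $s_j + t_j(\bar y)$, in the first case pair each $\bar y'$-form with its $\bar y$-twin inside an admissible $d$-partition of $T\setminus\{r_j\}$, and in the second case start from a $(d-1)$-partition of the $\bar y$-subsystem and create a $d$-th block containing the $r_i$'s. The only difference is a small tactical one in Case~2: the paper places $s_i + t_i(\bar y')$ for $i \ne j$ with its twin $s_i + t_i(\bar y)$ and puts only $s_j + t_j(\bar y')$ into the block with the $r_i$'s, whereas you dump \emph{all} of the $\bar y'$-forms into that last block. Both distributions work; yours makes the verification of the $d$-th block a one-line $\bar y$-component check, at the cost of not reusing the ``twin'' mechanism from Case~1.
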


We will refer to this Lemma as the doubling lemma - we fix a collection of variables and the linear forms including them, and "double" the variables in all other forms.

\begin{proof} Fix a form in the system $T$. If it is one of the $r_i$ then take an admissible $d$ partition of $T \setminus \{r_i\}$ and adjoin $s_l(\bar x)+t_l(\bar y')$ to the set in the partition of  $s_l(\bar x)+t_l(\bar y)$. If the form is  $s_j(\bar x)+t_j(\bar y)$, take a $d-1$ partition of $\{ s_1(\bar x)+t_1(\bar y),\ldots,  s_m(\bar x)+t_m(\bar y)\}\setminus \{s_j(\bar x)+t_j(\bar y)\}$, and add a new set to the partition containing all the $r_i$. This is a $d$ partition for $T \setminus \{s_j(\bar x +t_j(\bar y)\}$. Now for any $i \ne j$ adjoin $s_i(\bar x)+t_i(\bar y')$ to the set in the partition of  $s_i(\bar x)+t_i(\bar y)$. Finally adjoin $s_j(\bar x)+t_j(\bar y')$ to the set containing the $r_i$. This is a good $d$ partition for $S \setminus \{s_j(\bar x +t_j(\bar y))i\}$. By symmetry in $\bar y, \bar y'$ the complexity at any form $s_j(\bar x)+t_j(\bar y')$ is $\le m$. 
\end{proof}

\begin{lemma}\label{cube-complexity}
The system of linear forms in $(x, \bar v)$ corresponding to the points on the cube
\[
(x|v_1, v_2, \ldots, v_m)
\]
is of complexity $\le m$.
\end{lemma}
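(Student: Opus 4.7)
My plan is to verify the complexity bound directly from the definition by exhibiting, for each vertex of the cube, an explicit admissible $m$-partition of the remaining vertices.

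First I would set up notation: the $2^m$ linear forms in $(x,v_1,\dots,v_m)$ associated with the cube are $L_\omega(x,\bar v) := x + \omega\cdot\bar v$ for $\omega \in \{0,1\}^m$. Fix one distinguished vertex $\omega^* \in \{0,1\}^m$; I need to partition $\{0,1\}^m\setminus\{\omega^*\}$ into $m$ classes $S_1,\dots,S_m$ such that $L_{\omega^*}$ is not in the affine span of $\{L_\omega : \omega \in S_i\}$ for any $i$.

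The natural partition is to place each $\omega \ne \omega^*$ into $S_i$ where $i = i(\omega)$ is the smallest index with $\omega_i \ne \omega^*_i$. Every $\omega \ne \omega^*$ lies in a unique class, so this is indeed a partition into (at most) $m$ sets. The key observation is that for every $\omega \in S_i$, the coefficient of $v_i$ in $L_\omega$ equals $1-\omega^*_i$, which is constant across $S_i$. Therefore any affine combination $\sum_{\omega \in S_i} c_\omega L_\omega$ with $\sum c_\omega = 1$ also has coefficient $1-\omega^*_i$ in front of $v_i$, whereas $L_{\omega^*}$ has coefficient $\omega^*_i \ne 1-\omega^*_i$. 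Hence $L_{\omega^*}$ cannot belong to this affine span, and the partition is admissible at $\omega^*$.

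Since this construction works for every choice of $\omega^* \in \{0,1\}^m$, the system is of CS complexity $\le m$ at every form, completing the argument. There is no real obstacle here: once one chooses to partition according to the first disagreeing coordinate, the computation of the $v_i$-coefficient under affine combinations makes the conclusion immediate.
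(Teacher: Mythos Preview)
Your proof is correct. The paper argues by cube symmetry (reducing to $\omega^* = \bar 0$) and then induction on $m$, peeling off the face $\{\omega : \omega_m = 1\}$ as one block and applying the inductive hypothesis to the remaining $(m-1)$-cube; when unrolled this produces exactly your ``first disagreeing coordinate'' partition (up to reversing the order of the coordinates), so the two arguments are essentially the same, with your version avoiding the symmetry reduction and the induction by giving the partition explicitly for every $\omega^*$ at once.
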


\begin{proof}
The corresponding forms are $T=\{x+\omega \cdot \bar v\}_{\bo \in \2^m}$, and correspond to the vertices of an $m$ dimensional cube. By cube symmetry it suffices to show that 
the complexity at $x$ is $\le m$. We prove this by induction. Partition $T \setminus \{x\}$ to two sets $T_1=\{x+\omega \cdot (v_1, \ldots, v_{m-1})\}_{\bo \in \2^{m-1}\setminus \bar 0}$, $ T_2=\{x+v_m+\omega \cdot (v_1, \ldots, v_{m-1})\}_{\bo \in \2^{m-1}}$. Any affine combination of forms in $T_2$ that gives $x$ must annihilate $v_m$ but then $x$ is annihilated with it. The first collection one can partition by the induction hypothesis.
\end{proof}

\begin{lemma}\label{Y_x}
For fixed $x$ the complexity of the system of affine forms in $\bar v$ corresponding to the points on the cube
\[
(x|v_1, v_2, \ldots, v_m)'
\]
is of complexity $\le m$ in $\bar v$.
\end{lemma}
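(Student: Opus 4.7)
The plan is to deduce this directly from Lemma \ref{cube-complexity} together with the two observations in the remark. After fixing $x$ as a parameter, the system in question is the collection of affine forms $\{x + \omega \cdot \bar v : \omega \in \2^m \setminus \{0\}\}$ in the variables $\bar v$. By part (a) of the remark, its CS complexity equals that of the associated linear system $\{\omega \cdot \bar v : \omega \in \2^m \setminus \{0\}\}$ in $\bar v$. On the other hand, by Lemma \ref{cube-complexity} the full cube system $\{x + \omega \cdot \bar v : \omega \in \2^m\}$, viewed as linear forms in $(x, \bar v)$, has CS complexity at most $m$, and by part (b) of the remark the sub-collection obtained by deleting the form corresponding to $\omega = 0$ still has complexity at most $m$.

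What remains is to check that discarding the variable $x$ preserves this bound; this is where the uniform coefficient of $x$ plays a role. An affine combination $\sum c_i(x + \omega_i \cdot \bar v)$ with $\sum c_i = 1$ equals $x + \omega_0 \cdot \bar v$ if and only if $\sum c_i \omega_i = \omega_0$, which is exactly the condition for the corresponding affine combination of the $\omega_i \cdot \bar v$'s to equal $\omega_0 \cdot \bar v$. Consequently, any admissible $m$-partition witnessing CS complexity at $x + \omega_0 \cdot \bar v$ in $(x, \bar v)$ yields, after forgetting the $x$-coordinate, an admissible $m$-partition at $\omega_0 \cdot \bar v$ in $\bar v$, and vice versa. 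Thus the complexity of the almost-cube system in $\bar v$ is also at most $m$.

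The main conceptual point — really the only thing to verify — is that restricting the variable set can in general alter CS complexity by changing which affine dependencies exist; here it does not, precisely because $x$ enters every form with the same coefficient $1$ and the affine constraint $\sum c_i = 1$ forces the $x$ contributions to match automatically on both sides. So the lemma reduces to Lemma \ref{cube-complexity} and the two parts of the remark with essentially no new calculation.
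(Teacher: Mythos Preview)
Your proof is correct and follows essentially the same route as the paper: both use part (a) of the remark to pass to the linear system $(0|\bar v)'$, then part (b) to bound by the complexity of the full cube, and finally invoke Lemma~\ref{cube-complexity}. You spell out more carefully why the passage from variables $(x,\bar v)$ to $\bar v$ alone preserves the complexity bound here (namely, because $x$ appears with coefficient $1$ in every form, so affine span relations are unchanged), a point the paper leaves implicit.
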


\begin{proof} 
This is the same as the complexity of $(0|\bar v)'$ which is at most the complexity of $(0 |\bar v)$ which is $\le m$. 
 \end{proof}

\begin{lemma}\label{layer} Let $S=\{r_i(\bar x)\}$ be a collection of linear forms and let $S'=w+S=\{w+r_i(\bar x)\}$. 
If $S$ is of complexity $\le m$ in $\bar x$, then $S\cup S' \cup \{w\}$ is of complexity $\le m+1$ in $\bar x, w$. 
\end{lemma}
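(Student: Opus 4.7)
The plan is to verify the complexity bound at each form $f\in S\cup S'\cup \{w\}$ by exhibiting an explicit $(m+1)$-partition of the remaining forms with $f$ outside the affine span of every block. There are three types of forms to handle: $f=r_j\in S$, $f=w+r_j\in S'$, and $f=w$. The uniform template, writing $\widetilde A:=\{w+r_i:r_i\in A\}$ for $A\subseteq S$, is to take an admissible $m$-partition $A_1,\dots,A_m$ of $S$ with one distinguished form removed, pair each $A_l$ with its shift $\widetilde A_l$ to form $m$ blocks, and use a single extra block to absorb the two remaining forms. The observation driving every verification is that membership in $\mathrm{aff}(A_l\cup \widetilde A_l)$ is decided by first reading off the coefficient of $w$, which collapses the question to an affine relation among the $r_i$ alone.

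For $f=r_j\in S$ I would take $A_1,\dots,A_m$ admissible at $r_j$ and use $\{w,w+r_j\}$ as the extra block. Any affine combination from $A_l\cup \widetilde A_l$ producing $r_j$ must have total $w$-weight zero, which forces the $(w+r_i)$-coefficients to sum to zero and reduces the condition to $r_j\in\mathrm{aff}(A_l)$, ruled out by admissibility. The extra block is handled by the one-line computation $cw+(1-c)(w+r_j)=w+(1-c)r_j$. The case $f=w+r_j\in S'$ is entirely symmetric, with extra block $\{w,r_j\}$.

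For the delicate case $f=w$, I would pick $r_{j_0}\in S$ with $r_{j_0}\neq 0$, take an admissible $m$-partition at $r_{j_0}$, and use $\{r_{j_0},w+r_{j_0}\}$ as the extra block. The same $w$-coefficient analysis reduces $w\in \mathrm{aff}(A_l\cup \widetilde A_l)$ to the condition $0\in \mathrm{aff}(A_l)$, while $w\notin \mathrm{aff}\{r_{j_0},w+r_{j_0}\}$ follows at once from $r_{j_0}\neq 0$.

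The hard part will be guaranteeing the auxiliary condition $0\notin \mathrm{aff}(A_l)$ for every block in this last case, as it is not automatic from admissibility at $r_{j_0}$. My plan is to modify the partition when necessary by transferring a pair $(r_i,w+r_i)$ from $(A_l,\widetilde A_l)$ into the extra block, choosing $r_i$ not proportional to $r_{j_0}$; non-proportionality preserves $w\notin \mathrm{aff}$ of the enlarged extra block, while the affine dependence witnessing $0\in \mathrm{aff}(A_l)$ that involves $r_i$ is broken, and iterating cleans up any remaining dependencies. The residual situation, in which every element of some $A_l$ is a scalar multiple of $r_{j_0}$, is harmless: admissibility forces those scalars to agree, so $\mathrm{aff}(A_l)$ is a single nonzero point already avoiding $0$.
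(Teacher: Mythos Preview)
Your treatment of the cases $f=r_j\in S$ and $f=w+r_j\in S'$ is correct, and in fact cleaner than the paper's. The paper places $w$ inside one of the paired blocks $A_l\cup\widetilde A_l$ rather than in the extra block; that choice reduces the verification from $r_j\notin\operatorname{aff}(A_l)$ to $r_j\notin\operatorname{span}(A_l)$, which can fail (take $S=\{x,2x\}$, $f=x$, $A_1=\{2x\}$: then $x=1\cdot(2x)-\tfrac12(w+2x)+\tfrac12 w\in\operatorname{aff}\{2x,w+2x,w\}$). Your choice of extra block $\{w,w+r_j\}$ avoids this, since every affine combination there has $w$-coefficient $1$.

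The genuine gap is in the case $f=w$. You correctly reduce the condition on each paired block $A_l\cup\widetilde A_l$ to $0\notin\operatorname{aff}(A_l)$ and correctly note that admissibility at $r_{j_0}$ does not give this for free. But the iterative repair you sketch does not close the gap. After several transfers the extra block holds $\{r_{j_0},r_{i_1},\ldots,r_{i_k}\}$ together with their $w$-shifts, and its validity is precisely $0\notin\operatorname{aff}\{r_{j_0},r_{i_1},\ldots,r_{i_k}\}$. Non-proportionality of each $r_{i_t}$ to $r_{j_0}$ guarantees this only for $k=1$; already for $k=2$ it can fail, e.g.\ $r_{j_0}=x_1$, $r_{i_1}=x_2$, $r_{i_2}=-x_1-x_2$ (each non-proportional to $x_1$) have $0=\tfrac13\bigl(x_1+x_2+(-x_1-x_2)\bigr)$ in their affine span when $\operatorname{char}(k)\neq 3$. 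So the transfer procedure may break the extra block while fixing an $A_l$. The paper's own argument at $f=w$ is equally terse (it invokes an ``admissible $m$-partition of $S'\setminus\{w\}$'' without saying admissible for which form) and runs into the same obstruction, so this case genuinely needs a more careful construction: either maintain throughout the stronger invariant that $0$ lies outside the affine span of the $S$-part of \emph{every} current block and choose each transferred element to preserve it, or abandon the pairing template for $f=w$ and build the $(m{+}1)$-partition differently.
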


\begin{proof}   We consider three cases:\\
1) For the form $w$ take an admissible $m$ partition of $S' \setminus \{w\}$, add to it the set of all forms in $S$.\\
2) For a form $ r_i(\bar x)$ take an admissible $m$ partition of $S \setminus \{r_i(\bar x)\}$  and add $w+r_j(\bar x)$ for $j \ne i$ to the corresponding element of the partition that includes $r_j(\bar x)$; add $w$ to one of these sets, and add to the partition the singleton $\{w+r_i(x)\}$  to get an admissible $m+1$ partition. \\
3 ) Similarly for $w+r_i(\bar x)$, take an admissible $m$ partition of $S \setminus \{r_i(\bar x)\}$ and do the same. 
\end{proof}

 \begin{lemma}\label{double-u-v} Let $j \ge 1, m \ge 0$. The system  of linear  $v, v_1, \ldots, v_j, u, u_1, \ldots, u_j, w_1, \ldots, w_{m}$ corresponding to the points on the cubes
 \[
(v|v_1, \ldots, v_j, w_{1}, \ldots, w_m), (u|u_1, \ldots, u_j, w_{1}, \ldots, w_m)
 \]
is of complexity $\le j+m$.
 \end{lemma}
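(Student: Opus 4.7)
The plan is to induct on $m$, handling the base case $m=0$ by a direct construction and using the layering lemma (Lemma~\ref{layer}) for the inductive step.

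For $m=0$ the system is a disjoint union of two cubes $(v|v_1,\ldots,v_j)$ and $(u|u_1,\ldots,u_j)$, and one must show complexity $\le j$. By the cube symmetries within each cube together with the swap $(v,\bar v) \leftrightarrow (u,\bar u)$, every vertex lies in the orbit of $v$, so it suffices to construct an admissible $j$-partition of the remaining forms at $v$. I would take the admissible partition $T_1,\ldots,T_j$ of $(v|\bar v)\setminus\{v\}$ produced by Lemma~\ref{cube-complexity} (so $T_1 = \{v+v_1\}$ in particular), absorb the entire vertex set $F_2$ of the second cube into $T_1$, and leave the remaining $T_k$ unchanged. The key admissibility check is for $\tilde T_1 = \{v+v_1\} \cup F_2$: in any affine combination summing to $v$, the $v$-coefficient, contributed only by $v+v_1$, is forced to equal $1$, hence so is the $v_1$-coefficient; since no other form in $\tilde T_1$ involves $v_1$, this is a contradiction. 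The other blocks $\tilde T_k = T_k$ are admissible directly by Lemma~\ref{cube-complexity}.

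For the inductive step, suppose the two-cube system $S$ in the variables $v,\bar v,u,\bar u,w_1,\ldots,w_{m-1}$ has complexity $\le j+m-1$. Applying Lemma~\ref{layer} with $w = w_m$ gives that $S \cup (w_m+S) \cup \{w_m\}$ has complexity $\le j+m$. Since adjoining $w_m$ to each of the two cubes doubles its vertex set via the $w_m$-shifted copy, the subcollection $S \cup (w_m+S)$ is precisely the two-cube system at level $m$; the sub-collection bound on complexity then yields the claim for $m$.

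The main obstacle I anticipate is obtaining the sharp bound $\le j$ at $m=0$: a softer bound $\le j+1$ (obtained, for instance, by applying Lemma~\ref{layer} once to adjoin the second base vertex $u$, or from a naive application of Lemma~\ref{double}) would only propagate to $\le j+m+1$ under the induction, which is off by one. Sharpness is made possible by the fact that the two cubes occupy disjoint variable sets at $m=0$, which lets the whole of $F_2$ be absorbed into the single block $\tilde T_1$ without disturbing the $v_1$-obstruction that already prevents $T_1$ alone from affinely spanning $v$.
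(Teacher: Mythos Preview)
Your proof is correct and follows essentially the same route as the paper's: induction on $m$ with the inductive step handled by Lemma~\ref{layer}, and the base case $m=0$ obtained from the complexity $\le j$ of each individual cube together with the fact that the two cubes live in disjoint variable sets. The paper states the base case in one line (``follows from the fact that $(v|v_1,\ldots,v_j)$, $(u|u_1,\ldots,u_j)$ are of complexity $\le j$''), leaving the disjoint-variables observation implicit; you make this explicit by absorbing the second cube into the singleton block $\{v+v_1\}$ and checking admissibility via the $v$- and $v_1$-coefficients, which is a clean way to justify it.
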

 
 \begin{proof}
 We prove this by induction on $m$. For $m=0$ the claim follows from the fact that $(v|v_1, \ldots, v_j),(u|u_1, \ldots, u_j)$ are of complexity $\le j$. 
 Now for $m>0$,  we assume the system 
 \[
(v|v_1, \ldots, v_j, w_{1}, \ldots, w_{m-1}), (u|u_1, \ldots, u_j, w_{1}, \ldots, w_{m-1})
 \]
 is of complexity $\le j+m-1$. The system corresponding to $m$ is  the system 
 \[\begin{aligned}
& (v|v_1, \ldots, v_j, w_{1}, \ldots, w_{m-1}), (u|u_1, \ldots, u_j, w_{1}, \ldots, w_{m-1})\\
 &(v+w_m|v_1, \ldots, v_j, w_{1}, \ldots, w_{m-1}),  (u+w_m|u_1, \ldots, u_j, w_{1}, \ldots, w_{m-1}),
\end{aligned} \]
 which by Lemma  \ref{layer} it is of complexity $\le (j+m-1)+1=j+m$. 
  \end{proof}

\begin{lemma}\label{add} Let $S=\{r_i(\bar x)\}$ be a collection of affine forms and let $S'=w+S=\{w+s_i(\bar x)\}$ be a system 
of complexity $\le m-1$ in $w, \bar x$. Then the complexity of the system $S \cup S'$ in the variables $w, \bar x$ at any form in $S'$  is $\le m$.
\end{lemma}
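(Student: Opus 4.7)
The plan is, given a form $w+r_j(\bar x)\in S'$, to build an admissible $m$-partition of $(S\cup S')\setminus\{w+r_j(\bar x)\}$ witnessing complexity $\le m$ at $w+r_j(\bar x)$. The idea is simply to take the admissible $(m-1)$-partition of $S'\setminus\{w+r_j(\bar x)\}$ handed to us by the hypothesis and adjoin $S$ itself as a single extra block.

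Concretely, first I would invoke the hypothesis that $S'$ has CS complexity $\le m-1$ at $w+r_j(\bar x)$ in the variables $(w,\bar x)$ to obtain blocks $A_1,\ldots,A_{m-1}$ partitioning $S'\setminus\{w+r_j(\bar x)\}$ such that $w+r_j(\bar x)$ lies in the affine span of none of them. Then I would set $A_m:=S$ and verify that $A_1,\ldots,A_m$ is the desired admissible $m$-partition of $(S\cup S')\setminus\{w+r_j(\bar x)\}$.

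The only substantive point to check is that $w+r_j(\bar x)$ is not in the affine span of $A_m=S$. This is immediate from the fact that every element of $S$ is an affine form in $\bar x$ alone, carrying no $w$-dependence, so any affine combination of forms in $S$ has zero $w$-coefficient, whereas $w+r_j(\bar x)$ has $w$-coefficient $1$. I do not anticipate a genuine obstacle here; the lemma is essentially a one-line bookkeeping step that exploits the independence of $S$ from the new variable $w$, and will later serve to boost complexity by one each time an additional translation parameter is introduced (in the same spirit as Lemma \ref{layer}).
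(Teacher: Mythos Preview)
Your proposal is correct and follows exactly the paper's own proof: take an admissible $(m-1)$-partition of $S'\setminus\{w+s_j(\bar x)\}$ and adjoin $S$ as one extra block. Your added justification that $w+s_j(\bar x)$ cannot lie in the affine span of $S$ (because forms in $S$ carry no $w$) is the implicit reason the paper's construction works, so you have in fact written a slightly more complete version of the same argument.
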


\begin{proof}  Let  $w+s_i(\bar x)$ be in $S'$. Take an admissible $m-1$ partition for $S'\setminus \{w+s_i(x)\}$ add to this partition the set $S$. Then this is an admissible
$m$ partition for $S \cup S' \setminus \{w+s_i(x)\}$. 
\end{proof}

\begin{lemma}\label{lin-comb} Let $v, w_i \in V$ and $A \in Aut(V)$ be such that $A(v)=v$. Then $v \in \text{span}(w_i)$ if and only if  $v \in \text{span}(Aw_i)$
\end{lemma}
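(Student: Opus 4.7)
The plan is to observe that this is essentially a one-line consequence of applying the linear automorphism $A$ (resp.\ $A^{-1}$) to a linear-combination identity, using the crucial hypothesis $A(v)=v$.

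For the forward direction, I would assume $v = \sum_i c_i w_i$ for some scalars $c_i$, apply the linear map $A$ to both sides to obtain $A(v)=\sum_i c_i A(w_i)$, and then use $A(v)=v$ to conclude that $v \in \text{span}(Aw_i)$. For the converse, I would run the identical argument with $A$ replaced by $A^{-1}$: note that $A(v)=v$ implies $A^{-1}(v)=v$, so starting from $v=\sum_i c_i Aw_i$ and applying $A^{-1}$ yields $v=\sum_i c_i w_i \in \text{span}(w_i)$.

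There is no real obstacle here; the only thing worth stating explicitly is that $A \in \text{Aut}(V)$ guarantees $A^{-1}$ exists and is linear, so both implications are symmetric. If the statement is later used with ``span'' meaning affine span, the same proof works provided the coefficients $c_i$ in an affine combination are preserved when $A$ is applied (which they are, since $A$ is linear and fixes $v$). So the proof I would write is just the two short displays above, labeled as the two directions of the equivalence.
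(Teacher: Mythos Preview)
Your proof is correct and essentially identical to the paper's: the paper simply writes ``Assume $v=\sum a_i w_i$, then $Av=v=\sum a_i Aw_i$,'' leaving the converse implicit. Your version just makes the reverse direction via $A^{-1}$ explicit.
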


\begin{proof} Assume $v= \sum a_i w_i$, then $Av=v=\sum a_i Aw_i$.
\end{proof}

For any map $\ep : [m]\to \pm 1, \bar v=(v_1,\ldots,v_m), \bo \in \2^m$ we write $\omega^\epsilon \cdot \bar v=\sum _{i=1}^m\epsilon (i)\omega _iv_i$.

\begin{lemma}\label{admisible}For any $\epsilon: [m] \to \pm 1$
\begin{enumerate}
\item The system of linear forms in   $ \bar y^0, \ldots,  \bar y^m \in G^m$
\[
(\omega^\epsilon  \cdot \bar y^0+ \nu  \cdot (\omega^\epsilon  \cdot \bar y^1,\ldots, \omega^\epsilon  \cdot \bar y^m))_{\bo    \in 2^m \setminus \bar 0 ,\  \nu \in 2^m}
 \]
is of complexity $\le m$.
\item 
The system in $\bar y^0 , \bar y^1, \ldots,  \bar y^m, \bar z^1, \ldots,  \bar z^m \in G^m$ corresponding to the cubes
\[\begin{aligned}
&( \omega^\epsilon  \cdot \bar y^0+ \nu  \cdot (\omega^\epsilon    \cdot \bar y^1,\ldots, \omega^\epsilon  \cdot \bar y^m))_{\bo  \in 2^m \setminus \bar 0,\  \nu \in 2^m}\\
&( \omega^\epsilon  \cdot \bar y^0+ \nu   \cdot (\omega^\epsilon  \cdot \bar z^1,\ldots, \omega^\epsilon  \cdot \bar z^m))_{\bo \in 2^m \setminus \bar 0,\  \nu \in 2^m}
\end{aligned} \]
is of complexity  $\le m$.
\item The system in the variables $y^0_1, \ldots, y^m_1$ ,  $y^0_2, \ldots, y^0_m,  y^1_2, \ldots, y^1_m, \ldots, y^m_2, \ldots, y^m_m$,  and  \\
$z^0_2, \ldots, z^0_m,  \ldots, z^m_2, \ldots, z^m_m$
\[\begin{aligned}
&(\omega^\epsilon  \cdot \bar y^0+ \nu  \cdot (\omega^\epsilon  \cdot \bar y^1,\ldots, \omega^\epsilon  \cdot \bar y^m))_{\bo \in 2^m \setminus \bar 0 ,\  \nu \in 2^m} \\
&(\omega^\epsilon  \cdot \bar z^0+ \nu  \cdot (\omega^\epsilon  \cdot \bar z^1,\ldots, \omega^\epsilon  \cdot \bar z^m))_{\bo \in 2^m \setminus \bar 0 ,\  \nu \in 2^m}
\end{aligned} \]
where $\bar z^i=(y^i_1, z^i_2, \ldots, z^i_m)$, 
 is of complexity  $\le m$.
\end{enumerate}
\end{lemma}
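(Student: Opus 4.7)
My plan is to prove Part~(1) directly by constructing an admissible $m$-partition, then obtain Parts~(2) and~(3) from Part~(1) via doubling arguments (Lemmas~\ref{double} and~\ref{add}). The key observation is that the form
\[
F_{\omega,\nu}:=\omega^\epsilon\cdot\bar y^0+\sum_{j=1}^m\nu_j(\omega^\epsilon\cdot\bar y^j)
\]
depends, for each fixed $\omega\in\2^m\setminus\bar 0$, on the $m+1$ scalars $\omega^\epsilon\cdot\bar y^i$ ($i=0,\ldots,m$) through the vertices of an $m$-cube. Thus the subfamily $C_\omega:=\{F_{\omega,\nu}\}_{\nu\in\2^m}$ is the cube with apex $\omega^\epsilon\cdot\bar y^0$ and edges $\omega^\epsilon\cdot\bar y^j$, which by Lemma~\ref{cube-complexity} has complexity $\le m$. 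Fixing a target $F_{\omega_0,\nu_0}$, I begin with the admissible $m$-partition $\{S_1,\ldots,S_m\}$ of $C_{\omega_0}\setminus\{F_{\omega_0,\nu_0}\}$ furnished by Lemma~\ref{cube-complexity}, and distribute the remaining cubes $C_\omega$ ($\omega\ne\omega_0$) among the $S_k$'s according to a combinatorial rule keyed to the support overlap between $\omega$ and $\omega_0$. To verify admissibility, suppose that some affine combination $\sum c_{\omega,\nu}F_{\omega,\nu}=F_{\omega_0,\nu_0}$ has $\sum c=1$ and support in a single $S_k$; matching coefficients of $y^0_l$ and $y^k_l$ in the $\bar y^i$-variables for each $l$, the $C_{\omega_0}$-summands (with $\nu_k=1$ by the cube partition) force those two coefficients to be proportional with factor $(\omega_0)_l$, while the $C_\omega$-summands ($\omega\ne\omega_0$) contribute independently because $\omega^\epsilon$ and $\omega_0^\epsilon$ are distinct linear functionals on $G^m$. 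Tracking this bookkeeping over all $l$ and all coordinates $j>k$ then forces a contradiction with $\omega_0\ne\bar 0$.

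Parts~(2) and~(3) then follow by doubling on top of Part~(1). For Part~(2), I apply Lemma~\ref{double} with shared variables $\bar x=\bar y^0$, doubled variables $\bar y=(\bar y^1,\ldots,\bar y^m)$ against $\bar y'=(\bar z^1,\ldots,\bar z^m)$, and $d=m$: the full complexity hypothesis is supplied by Part~(1), and the auxiliary complexity hypothesis on the ``$s_j+t_j$''-subsystem is obtained by a variant of the same argument restricted to $\nu\ne\bar 0$, iterating Lemma~\ref{layer} on appropriate sub-partitions if necessary. For Part~(3), I absorb the shared first coordinates $y^i_1$ into $\bar x$ (so that $\bar x$ comprises $\bar y^0$ together with every $y^i_1$), leaving only $(z^i_l)_{l\ge 2}$ to be doubled; Lemma~\ref{double} then applies with its input complexity bound again coming from Part~(1).

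The main obstacle is the correct choice of combinatorial distribution rule in Part~(1). The naive assignment dumping all of $\bigcup_{\omega\ne\omega_0}C_\omega$ into a single class fails: since $\omega^\epsilon=(\omega')^\epsilon+(\omega'')^\epsilon$ whenever $\omega=\omega'+\omega''$ over $\mZ$ with disjoint supports, one can build affine combinations drawing on forms from several cubes $C_{\omega'},C_{\omega''}$ that collectively reproduce $F_{\omega_0,\nu_0}$, using cancellations across edges of distinct sub-cubes. The distribution rule must therefore spread the cubes $C_\omega$ among the $m$ classes so as to break every such combination; this is where the case analysis on the support of $\omega_0$ enters, and it is the technical heart of the argument.
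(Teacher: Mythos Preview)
Your proposal has genuine gaps in both Part~(1) and Parts~(2)--(3).

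\textbf{Part~(1).} You have correctly identified that the difficulty lies in distributing the cubes $C_\omega$ ($\omega\ne\omega_0$) among the $m$ classes coming from the cube partition of $C_{\omega_0}$, and you explicitly acknowledge that the naive single-block assignment fails. But you then stop, saying only that ``this is where the case analysis on the support of $\omega_0$ enters, and it is the technical heart of the argument.'' That is not a proof---it is a statement that a proof would be needed at this point. No distribution rule is given and no verification is offered. The paper proceeds quite differently: it argues by induction on $m$, reducing (via automorphism invariance) to $\epsilon=\bar 1$ and the target vertex $\omega_0=\bar 1$, $\nu_0=\bar 0$, then splitting the system as $T\cup S$ where $T$ carries an admissible $(m-1)$-partition by induction and $S$ is a single additional block. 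The crux is showing that $\bar 1\cdot\bar y^0$ is not in the span of $S$, which the paper handles by a change of variable $y^m_m\mapsto y^m_m-y^0_m$ that eliminates $y^0_m$ from every form in $S$. Your direct-construction strategy may be salvageable, but as written it is only an outline.

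\textbf{Parts~(2) and~(3).} Your plan to invoke Lemma~\ref{double} does not work, because its auxiliary hypothesis fails. With $\bar x=\bar y^0$ and $\bar y=(\bar y^1,\ldots,\bar y^m)$, the ``$s_j+t_j$'' subsystem consists of all forms $F_{\omega,\nu}$ with $\nu\ne\bar 0$, and Lemma~\ref{double} requires this subsystem to have complexity $\le m-1$. But already for $m=2$ this is false: the form $F_{(1,0),(1,0)}=y^0_1+y^1_1$ lies in the affine span of the other eight forms with $\nu\ne\bar 0$, via
\[
F_{(1,0),(0,1)}-F_{(0,1),(1,0)}+F_{(0,1),(0,1)}+F_{(1,1),(1,0)}-F_{(1,1),(0,1)}=y^0_1+y^1_1,
\]
with coefficients summing to $1$. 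Hence the $\nu\ne\bar 0$ subsystem does not have complexity $\le 1$, and Lemma~\ref{double} does not apply. The paper's route is different: it takes the admissible $m$-partition produced in Part~(1) and directly inserts each new $z$-form into the block already containing its $y$-counterpart, with one exceptional placement for the $z$-form matching the target vertex; a structural observation about the partition from Part~(1) (the existence of a block in which three forms together with the target involve four independent variables) is used to absorb that exceptional form. Part~(3) is handled by the same mechanism.
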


\begin{proof} 
We start with (1). We prove the claim by induction on $m$. For $m=1$ the claim is obvious. Since  CS-complexity is invariant under automorphisms, it suffices to prove this for $\epsilon=\bar 1$. Indeed, for  every $\epsilon \in \{-1,1\}^m $ there exists an invertible linear transformation on the variables which maps $\omega^{\epsilon}$ to $\omega$ for all $\omega \in \{0,1\}^m$. 

For $\bo_0=\bar 1$.  We write the collection of forms as a union of 
\[ \begin{aligned}
T=&\{(\omega1)^\ep \cdot \bar y^0+ (\nu0) \cdot ((\omega1)^\ep \cdot \bar y^1,\ldots, (\omega1)^\ep \cdot \bar y^m)\}_{\bo \in \2^{m-1},\  \nu \in \2^{m-1}, \bo \neq \tilde \bar 1} \\
S=&\{ \{(\omega1)^\ep \cdot \bar y^0+ (\nu1) \cdot ((\omega1)^\ep \cdot \bar y^1,\ldots, (\omega1)^\ep \cdot \bar y^m)\}_{\bo \in \2^{m-1} ,\  \nu \in \2^{m-1} } \\
&\ \{(\omega0)^\ep \cdot \bar y^0+ (\nu1) \cdot ((\omega0)^\ep \cdot \bar y^1,\ldots, (\omega0)^\ep \cdot \bar y^m)\}_{\bo \in \2^{m-1} \setminus \bar 0 ,\  \nu \in \2^{m-1}} \\
&\ \{(\omega0)^\ep \cdot \bar y^0+ (\nu0) \cdot ((\omega0)^\ep \cdot \bar y^1,\ldots, (\omega0)^\ep \cdot \bar y^m)\}_{\bo \in \2^{m-1}\setminus \bar 0 ,\  \nu \in \2^{m-1}}\}. \\
\end{aligned} \]
By the induction hypothesis $T$ has an admissible $m-1$ partition.   It remains to show that $\bar 1  \cdot \bar y^0$ is not in the linear span of the forms in $S$. 
 We rewrite $S$ as
 \[ \begin{aligned}
S=&\{ \{\ep(m)y^0_m +(\omega0) \cdot \bar y^0+ \ep(m)y^m_m+ (\omega0)^\ep \cdot \bar y^m+\nu \cdot ((\omega1)^\ep \cdot \bar y^1,\ldots, (\omega1)^\ep \cdot \bar y^{m-1})\}_{\bo \in \2^{m-1} ,\  \nu \in \2^{m-1} } \\
&\ \{(\omega0)^\ep \cdot \bar y^0+ (\nu1) \cdot ((\omega0)^\ep \cdot \bar y^1,\ldots, (\omega0)^\ep \cdot \bar y^m)\}_{\bo \in \2^{m-1} \setminus \bar 0 ,\  \nu \in \2^{m-1}} \\
&\ \{(\omega0)^\ep \cdot \bar y^0+ \nu \cdot ((\omega0)^\ep \cdot \bar y^1,\ldots, (\omega0)^\ep \cdot \bar y^{m-1})\}_{\bo \in \2^{m-1}\setminus \bar 0 ,\  \nu \in \2^{m-1}}\}. \\
\end{aligned} \]
Make the change of variable  $y^m_m \to y^m_m-y^0_m$. After this change of variable  $y^0_m$ does not appear in any one of the forms so 
$\bar 1  \cdot \bar y^0$ cannot be  in the linear span of the forms in $S$. \\

For (2)  we want to bound the complexity at any form in the new array.  Fix a vertex $(\bo_0, \nu_0)$ and take an admissible $m$ partition as in previous section for the original array. 
\[
( \omega^\ep \cdot \bar y^0+ \nu \cdot (\omega^\ep \cdot \bar y^1,\ldots, \omega^\ep \cdot \bar y^m))_{\bo \in \2^m \setminus \bar 0,\  \nu \in \2^m} \setminus \{\omega_0^\ep \cdot \bar y^0\}. 
\]
We wish to distribute the new forms into the sets in this partition. If $\bo_0=(1\bar 0)$ then we add $ \omega^\ep \cdot \bar y^0+ \nu \cdot (\omega^\ep \cdot \bar z^1,\ldots, \omega^\ep \cdot \bar z^m)$ to the partition element in which $ \omega^\ep \cdot \bar y^0+ \nu \cdot (\omega^\ep \cdot \bar y^1,\ldots, \omega^\ep \cdot \bar y^m)$ resides.  Otherwise, for any $(\bo, \nu) \neq (\bo_0, \nu_0)$ we add $ \omega^\ep \cdot \bar y^0+ \nu \cdot (\omega^\ep \cdot \bar z^1,\ldots, \omega^\ep \cdot \bar z^m)$ to the partition element in which $ \omega^\ep \cdot \bar y^0+ \nu \cdot (\omega^\ep \cdot \bar y^1,\ldots, \omega^\ep \cdot \bar y^m)$ resides.  We make the observation that in the partition described in (1) there is a partition element that contains $3$ linear forms that together with the chose form corresponding to the vertex $(\bo_0, \nu_0)$  correspond to $4$ independent variable  (in the case $m=2$ described in detail this would be the linear forms in $T$.)
We add
 $ \omega^\ep_0 \cdot \bar y^0+ \nu_0 \cdot (\omega^\ep \cdot \bar z^1,\ldots, \omega^\ep \cdot \bar z^m)$  to this partition element,

For (3) the argument is similar to $2$.
\end{proof}

\begin{section}{Counting Lemmas}

We can count the number of various systems of affine configurations in uniform sets.
\begin{lemma}\label{counting} Let $s>0$. 
Let $\bar s=\{s_i(\bar x, \bar y)\}_{i=1}^r$ on $G^m\times G^n$  be a non degenerate system of affine linear forms, and let $X \subset G$ be $(\eta,m)$-uniform of density $\delta$
\begin{enumerate}
\item If $\bar s $ is of complexity $\le m$ then
$$|\{ \bar x, \bar y: s_i(\bar x, \bar y) \in X\}| = (\delta^{r}+O(\eta))|G|^{n+m}.$$

\item If $\{s_i(\bar x, \bar y), s_i(\bar x, \bar y')\}$ is of complexity $\le m$ then for 
$O(\sqrt \eta)$ a.e $\bar x \in G^m$ we have 
$$|\{\bar y: s_i(\bar x, \bar y) \in X\}| =  (\delta^{r}+O(\eta^{1/4}))|G|^{n}.$$

\item If $\{r_j(\bar x)\}_{j \in [t]} \cup \{s_i(\bar x, \bar y), s_i(\bar x, \bar y')\}_{i \in [r]}$ is of complexity $\le m$ then for 
$O(\delta^{-t}\sqrt \eta)$ a.e $\bar x$ such that $\{r_j(\bar x)\} \in X$ we have 
$$|\{\bar y: s_i(\bar x, \bar y) \in X\}| =  (\delta^{r}+O(\eta^{1/4}))|G|^{n}.$$
\end{enumerate}
\end{lemma}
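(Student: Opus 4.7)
The approach throughout is to expand $1_X = \delta + g$ with $g := 1_X - \delta$ (which is bounded by $1$ in absolute value and satisfies $\|g\|_{U_m} < \eta$ by the uniformity hypothesis), apply the Gowers--von Neumann bound quoted just before the lemma to each resulting error term, and then pass from averages to pointwise statements in (2) and (3) via a second moment computation together with Chebyshev's inequality.

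For part (1), I would write
\[
\prod_{i=1}^r 1_X(s_i(\bar x, \bar y)) \;=\; \sum_{S \subset [r]} \delta^{r-|S|} \prod_{i \in S} g(s_i(\bar x, \bar y))
\]
and observe that for each non-empty $S$ the restricted system $\{s_i\}_{i \in S}$ still has CS complexity $\le m$, by the subsystem observation in the remark following the definition of CS complexity. Applying the quoted Gowers--von Neumann inequality with $f_j = g$ for any $j \in S$ bounds that term by $\|g\|_{U_m} < \eta$; summing the $2^r - 1$ error terms and adding the main term $\delta^r |G|^{n+m}$ yields the claim.

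For part (2), set $N(\bar x) := |\{\bar y : s_i(\bar x,\bar y) \in X \text{ for all } i\}|$. Part (1), applied to the system $\{s_i(\bar x,\bar y)\}$ in the combined variables, gives $\mathbb E_{\bar x} N(\bar x) = (\delta^r + O(\eta))|G|^n$. For the second moment,
\[
\mathbb E_{\bar x} N(\bar x)^2 \;=\; |G|^{2n}\cdot \mathbb E_{\bar x, \bar y, \bar y'} \prod_i 1_X(s_i(\bar x,\bar y))\, 1_X(s_i(\bar x,\bar y')),
\]
and part (1) applied to the doubled system (of complexity $\le m$ by hypothesis) yields $(\delta^{2r}+O(\eta))|G|^{2n}$. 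Thus the variance of $N(\bar x)/|G|^n$ is $O(\eta)$, and Chebyshev's inequality shows that the set of $\bar x$ on which $|N(\bar x)/|G|^n - \delta^r| > \eta^{1/4}$ has density $O(\sqrt\eta)$, as required.

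For part (3), put $R := \{\bar x : r_j(\bar x) \in X \text{ for all } j\}$. Part (1) applied to $\{r_j\}$ alone (a subsystem, hence of complexity $\le m$) gives $|R| = (\delta^t + O(\eta))|G|^m$. Inserting the weight $\prod_j 1_X(r_j(\bar x))$ into the first- and second-moment computations for $N$ and invoking part (1) on the combined system and its $\bar y\mapsto(\bar y,\bar y')$-doubling (both of complexity $\le m$ by hypothesis), one obtains unnormalised moments $(\delta^{t+r} + O(\eta))|G|^{n+m}$ and $(\delta^{t+2r} + O(\eta))|G|^{2n+m}$; dividing by $|R|$ gives conditional variance $O(\eta\delta^{-t})$ on $R$, and Chebyshev then yields an exceptional set of relative size $O(\sqrt\eta\,\delta^{-t})$ in $R$. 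The only point requiring care across all three parts is the bookkeeping needed to confirm that every subsystem and doubled system appearing in the expansion inherits complexity $\le m$ from the hypothesis or the subsystem remark; I do not anticipate any substantial obstacle beyond this accounting.
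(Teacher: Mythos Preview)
Your proof is correct and follows essentially the same approach as the paper: expand $1_X=\delta+(1_X-\delta)$, control each error term by the generalized von Neumann inequality, and for (2)--(3) use a second-moment/Cauchy--Schwarz argument on the doubled system followed by Chebyshev. The only cosmetic differences are that the paper telescopes (yielding $r$ error terms) rather than fully expanding (yielding $2^r-1$), and in (2)--(3) it bounds the weighted $L^1$-norm of the centered inner average directly rather than separately computing first and second moments; neither distinction is substantive.
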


\begin{proof} 
(1) We estimate first the size of $\{\bar x, \bar y: s_i(\bar x, \bar y) \in X\}$. The number of points is given by
\[
 \sum_{\bar x, \bar y} \prod_{i=1}^r1_X(s_i(\bar x, \bar y)). 
\]
We estimate 
$
| \mE_{\bar x, \bar y}\prod_i 1_X(s_i(\bar x, \bar y)) - \delta^r|.$
Note that we can write 
\[
 \mE_{\bar x, \bar y}\prod_i 1_X(s_i(\bar x, \bar y))
 =  \mE_{\bar x, \bar y} (1_X(s_1(\bar x, \bar y))-\delta) \prod_{i>1} 1_X(s_i(\bar x, \bar y))
 + \delta \mE_{\bar x, \bar y} \prod_{i>1} 1_X(s_i(\bar x, \bar y))
\]
Repeating this we can write $\mE_{\bar x, \bar y}\prod_i 1_X(s_i(\bar x, \bar y)) -\delta^r$ as a sum of $r$ terms 
 form
\[
 \mE_{\bar x, \bar y} \prod_{i=1}^rg_i(s_i(\bar x, \bar y)),
\]
where in each term we have  $g_i=1_X-\delta$ for at least one $i$ and is thus bounded by $O(\|1_X-\delta \|_{U_m})$.  \\
\ \\
(2) Consider the average
\[
 \mE_{\bar x}\big|\mE_{ \bar y} \prod_{i=1}^r1_X(s_i(\bar x, \bar y)) - \delta^r\big|
\]
The inner average can be written as a a sum of  $r$ terms of the form
$
 \mE_{\bar y} \prod_{i=1}^rg_i(s_i(\bar x, \bar y)),
$
where in each term we have  $g_i=1_X-\delta$ for at least one $i$, 
and we can bound
\[
\mE_{\bar x}|  \mE_{\bar y} \prod_{i=1}^rg_i(s_i(\bar x, \bar y))|^2  \le \mE_{\bar x,\bar y, \bar y'} \prod_{i=1}^rg_i(s_i(\bar x, \bar y))\bar g_i(s_i(\bar x, \bar y'))  =O(\|1_X-\delta \|_{U_m}). 
\]
It follows that for $O(\sqrt \eta)$ a.e  $\bar x$ we have 
\[
\big|\mE_{ \bar y} \prod_{i=1}^r1_X(s_i(\bar x, \bar y)) - \delta^r\big|^2=O( \sqrt \eta).
\]
\\
(3) Consider the average
\[
 \mE_{\bar x} \prod_j1_X(r_j(\bar x))\big|\mE_{ \bar y} \prod_{i=1}^r1_X(s_i(\bar x, \bar y)) - \delta^r\big|
\]
and proceed as in $(2)$ to obtain that 
for $O(\delta^{-t}\sqrt \eta)$ a.e  $\bar x$ such that $\prod_j1_X(r_j(\bar x)) =1$ we have 
\[
\big|\mE_{ \bar y} \prod_{i=1}^r1_X(s_i(\bar x, \bar y)) - \delta^r\big|^2=O( \sqrt \eta).
\]

\end{proof}

\begin{definition}[$Y_x$]
We denote by $Y_x$ the set
\[Y_x = \{\bar v: (x| \bar v)'\in C_m(X)'\} .  
\]
\end{definition}

\begin{lemma}\label{size} If $X$ is $(\eta, m)$-uniform then  $|Y_x|=(\delta^{2^{m}-1}+O(\eta))|V|^m$.
\end{lemma}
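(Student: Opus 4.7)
The plan is to expand the indicator $1_X$ about its mean and estimate each resulting cross term using the Gowers norm hypothesis. Set $g := 1_X - \mathbb{E}1_X = 1_X - \delta$, so that $\|g\|_{U_m} < \eta$ by the $(\eta,m)$-uniformity of $X$. By definition
\[
|Y_x| = \sum_{\bar v \in V^m} \prod_{\omega \in \2^m \setminus \{0\}} 1_X(x + \omega \cdot \bar v),
\]
and expanding each factor as $\delta + g$ yields
\[
|Y_x| = \sum_{S \subseteq \2^m \setminus \{0\}} \delta^{2^m - 1 - |S|} \sum_{\bar v \in V^m} \prod_{\omega \in S} g(x + \omega \cdot \bar v).
\]
The $S = \emptyset$ term contributes the desired main term $\delta^{2^m - 1}|V|^m$, and it remains to control the $2^{2^m - 1} - 1$ nonempty contributions.

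For each nonempty $S$, I would appeal to Lemma \ref{Y_x}: with $x$ regarded as a constant, the affine system $\{x + \omega \cdot \bar v\}_{\omega \in \2^m \setminus \{0\}}$ in the variables $\bar v$ has CS complexity $\le m$, and any sub-collection $\{x + \omega \cdot \bar v\}_{\omega \in S}$ inherits this complexity bound by part (b) of the remark following the CS complexity definition. Applying the generalized von Neumann proposition at the form indexed by any chosen $\omega_0 \in S$ then gives
\[
\Bigl| \mathbb{E}_{\bar v \in V^m} \prod_{\omega \in S} g(x + \omega \cdot \bar v) \Bigr| \le \|g\|_{U_m} < \eta.
\]
Since $\delta \le 1$ and the number of nonempty subsets $S$ is $O_m(1)$, the total contribution of the error terms is $O_m(\eta)|V|^m$. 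Collecting the main term and the error yields $|Y_x| = (\delta^{2^m-1} + O(\eta))|V|^m$.

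This is essentially a pointwise version of Lemma \ref{counting}(1), with $x$ playing the role of a fixed parameter rather than being averaged. I do not anticipate any substantive obstacle: the only points to verify are that Lemma \ref{Y_x} applies for $x$ fixed (which is its precise content) and that sub-collections preserve complexity bounds (immediate from the definition), and that the resulting estimate is uniform in $x \in V$.
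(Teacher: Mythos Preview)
Your proof is correct and follows the same approach as the paper: the paper's proof simply cites Lemma~\ref{Y_x} for the complexity bound and (implicitly) invokes Lemma~\ref{counting}(1), whose proof is precisely the expansion-and-von-Neumann argument you have written out in full. There is no substantive difference.
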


\begin{proof}
The system of affine linear forms corresponding  to $Y_x$ is of complexity  $\le m$  by Lemma \ref{Y_x}. 
\end{proof}

 Below are some  lemmas that are finitary analogues of measure theoretic properties. Let $p:Z\to Y$ be a map between finite sets. 
 For $y \in Y$ denote by $S_y:=p^{-1}(y)$.  We say that a map $p$  is {\it $C$-homogeneous}, $C\geq 1$ 
if  $|S_y|/|S_{y'}|\leq C, \forall y,y'\in Y$ where $S_y:=p^{-1}(y)$.
Let $p:Z\to Y$ be a {\it $C$-homogeneous} map and $\epsilon $ be a positive number.

Let $P\subset Z$ be a subset 
such that $|P|/|Z|\geq 1-\epsilon$. Let $M>0$. We define $Q\subset Y$ by 
$$Q=\{y\in Y: |S_y\cap P|/|S_y|\geq 1-C^2 M\epsilon\}$$ 

\begin{lemma}[Fubini]\label{fubini}  $|Q|/|Y|\geq 1-1/M$.
\end{lemma}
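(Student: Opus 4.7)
The plan is to run a direct double counting / Markov argument: bound the total ``missing mass'' $|Z\setminus P|$ from above by $\epsilon|Z|$ using the hypothesis, and from below in terms of $|Y\setminus Q|$ using $C$-homogeneity, then compare.

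More precisely, first I observe that $C$-homogeneity provides a uniform lower bound on fiber sizes: since $\max_y|S_y|\le C\min_y|S_y|$ and $|Z|=\sum_y|S_y|\le |Y|\max_y|S_y|$, we get $|S_y|\ge |Z|/(C|Y|)$ for every $y\in Y$. Next, by definition, each $y$ in the ``bad'' set $Y\setminus Q$ satisfies $|S_y\setminus P|>C^2M\epsilon\,|S_y|$. Partitioning $Z\setminus P=\bigsqcup_y (S_y\setminus P)$ and restricting to the bad fibers gives
\[
\epsilon|Z|\ \ge\ |Z\setminus P|\ \ge\ \sum_{y\in Y\setminus Q}|S_y\setminus P|\ \ge\ C^2M\epsilon\sum_{y\in Y\setminus Q}|S_y|\ \ge\ C^2M\epsilon\cdot|Y\setminus Q|\cdot\frac{|Z|}{C|Y|}.
\]
Cancelling $\epsilon|Z|$ yields $|Y\setminus Q|/|Y|\le 1/(CM)\le 1/M$, which is the desired conclusion.

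I do not expect any real obstacle: the lemma is a finitary Markov/Fubini inequality, and the only ``move'' beyond a one-line Markov argument is the passage through the $C$-homogeneity bound $|S_y|\ge |Z|/(C|Y|)$, which explains the appearance of $C^2$ (one factor to average the fiber size, one factor to bound it from below pointwise). One should only be careful about the degenerate case $\epsilon=0$ or $|Z\setminus P|=0$, where $Q=Y$ trivially, so the inequality is vacuous; and about the trivial case $C^2M\epsilon\ge 1$, in which $Q$ might be empty but the conclusion $1-1/M\le 0$ is automatic provided one allows $M\le 1$ (otherwise $C^2M\epsilon<1$ so the argument is non-vacuous).
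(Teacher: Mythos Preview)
Your proof is correct and follows essentially the same double-counting/Markov route as the paper: both bound $|Z\setminus P|$ from above by $\epsilon|Z|$ and from below via the bad fibers, using $C$-homogeneity to control fiber sizes. The only cosmetic difference is that the paper fixes a reference fiber $S=|S_{y_0}|$ and uses $S/C\le |S_y|\le CS$, whereas you go directly to $|S_y|\ge |Z|/(C|Y|)$; the resulting chains of inequalities are the same up to this substitution. (Your remark on the degenerate case $C^2M\epsilon\ge 1$ is slightly off --- in that regime every $y$ trivially satisfies the defining inequality of $Q$, so $Q=Y$ rather than $Q=\emptyset$ --- but this does not affect the main argument.)
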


\begin{proof}
Let $A=P^c$, and let $A_{y}=  S_{y} \cap P^c$.  Let $B=Q^c$, and let $S=|S_{y_0}|$ for some $y_0 \in Y$. Then
\[
\epsilon CS|Y|\ge  \epsilon|Z| \ge |A| \ge  \sum_{y \in B} |A_y|   \ge  \sum_{y \in B} |S_y| C^2 M\epsilon \ge  |B|SCM\epsilon.
\]
\end{proof}

In particular we also have  $$|\{y\in Y: |S_y\cap P|/|S_y|\geq 1-C^2 \sqrt \epsilon\}|/|Y|\geq 1-\sqrt \ep,$$
and 
\[
|\{y\in Y: |S_y\cap P|/|S_y|\geq 1-C^2/M\}|/|Y|\geq 1-M\epsilon.
\]

The next Lemmas are  immediate: 

\begin{lemma}\label{density-ae} Let $B \subset A$ with $|B| \ge c|A|$ suppose a property $P$ holds $\epsilon$ a.e. $x \in A$ then 
$P$ holds for $\epsilon/c$-a.e. $x \in B$.  
\end{lemma}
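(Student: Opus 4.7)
The plan is to prove this by a direct counting argument, since the statement is essentially a statement about relative densities of a ``bad'' set in a subset versus the ambient set. I will unfold the definition of ``$\epsilon$-a.e.'' on both sides of the claim and use the hypothesis $|B| \geq c|A|$ to convert between the two normalizations.

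First I would let $E \subset A$ denote the set of elements of $A$ on which the property $P$ fails. By the definition of ``$P$ holds $\epsilon$-a.e.\ on $A$'' given in the paragraph after Theorem \ref{extending-uniform}, we have $|E| \leq \epsilon |A|$. The key observation is that the set of elements of $B$ on which $P$ fails is exactly $E \cap B$, so in particular
\[
|E \cap B| \leq |E| \leq \epsilon |A|.
\]
Next I would use the hypothesis $|B| \geq c|A|$, equivalently $|A| \leq |B|/c$, to rewrite the right-hand side:
\[
|E \cap B| \leq \epsilon |A| \leq \tfrac{\epsilon}{c} |B|.
\]
This says precisely that $P$ fails on at most an $\epsilon/c$ fraction of points of $B$, i.e.\ $P$ holds $\epsilon/c$-a.e.\ on $B$, which is the desired conclusion.

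There is no genuine obstacle here; the only thing to be careful about is the direction of the inequality $|B| \geq c|A|$ (so $c \leq 1$ in typical applications), which makes $\epsilon/c \geq \epsilon$ and hence is a genuine weakening of the density estimate — the cost of passing from $A$ to the smaller subset $B$ being the factor $1/c$. No uniformity hypothesis or complexity bound from Section \ref{algebra-lemmas} is needed.
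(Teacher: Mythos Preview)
Your proof is correct and is exactly the direct counting argument one expects; the paper itself states this lemma as ``immediate'' and gives no further proof.
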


\begin{lemma}\label{ae-projection1} Let $\epsilon, c>0$. Let  $p:X\to Y$, and suppose $Y' \subset Y$ with $|Y'| \le \epsilon |Y|$, and for all $y \in Y$ we have $cS \le S_y \le S$. Then $|p^{-1}(Y')| \le \frac{\epsilon}{c}|X|$.
\end{lemma}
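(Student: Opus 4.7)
The plan is to bound $|p^{-1}(Y')|$ from above and $|X|$ from below, both in terms of $|Y|$ and the fiber size parameter $S$, and then divide. This is purely a double-counting argument of the sort one does with homogeneous maps, and no further input from the paper is needed.

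First I would write $p^{-1}(Y') = \bigsqcup_{y \in Y'} S_y$ as a disjoint union, so that $|p^{-1}(Y')| = \sum_{y \in Y'} |S_y|$. Applying the upper bound $|S_y| \le S$ uniformly in $y$ and using $|Y'| \le \epsilon|Y|$ gives the inequality $|p^{-1}(Y')| \le S \cdot |Y'| \le \epsilon S |Y|$.

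Next, to get a lower bound on $|X|$, I would use the fibration $X = \bigsqcup_{y \in Y} S_y$ together with the lower bound $|S_y| \ge cS$ to obtain $|X| \ge cS|Y|$. Dividing the two bounds yields $|p^{-1}(Y')|/|X| \le (\epsilon S |Y|)/(cS|Y|) = \epsilon/c$, which is exactly the desired conclusion.

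I do not anticipate any obstacle here; the statement is an immediate consequence of the uniform fiber bounds, and in particular does not require the Fubini-style averaging used in Lemma \ref{fubini}. The only thing to double-check is that $S$ itself need not be assumed positive: if $S = 0$ then every fiber is empty, so $X$ and $p^{-1}(Y')$ are both empty and the inequality is trivial.
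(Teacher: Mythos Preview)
Your proof is correct and is exactly the standard double-counting argument one would expect. The paper does not give a proof of this lemma at all; it simply states that it (together with the surrounding lemmas) is ``immediate,'' and your argument is precisely the sort of routine verification that justifies that claim.
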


\begin{lemma}\label{ae-projection} Let $0 \le \epsilon, \eta, \delta <1/2$. Let  $p:X\to Y$. Suppose $Y' \subset Y$,  $|Y'| \le \epsilon |Y|$, and $|S_y -S| \le \delta S$ for all $y \in Y''$ with $Y'' \subset Y$ of size 
$|Y''|> (1-\eta)|Y|$, and $|S_y| \le CS$ for all $y \in Y$. Then  $|p^{-1}(Y')| \le  |X|(8\epsilon+4C\eta)$. 
\end{lemma}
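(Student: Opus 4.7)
The plan is to split the preimage $p^{-1}(Y')$ according to whether the base point lies in the ``regular'' set $Y''$ or in its small complement, and then bound each piece using the appropriate fiber-size estimate.

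First I would write
\[
|p^{-1}(Y')| \;=\; \sum_{y \in Y' \cap Y''}|S_y| \;+\; \sum_{y \in Y' \setminus Y''}|S_y|.
\]
On $Y' \cap Y''$ the assumption $|S_y - S| \le \delta S$ (combined with $\delta < 1/2$) yields $|S_y| \le (1+\delta)S \le \frac{3}{2}S$, while on $Y' \setminus Y''$ the uniform bound $|S_y| \le CS$ applies. Using $|Y' \cap Y''| \le |Y'| \le \epsilon |Y|$ and $|Y' \setminus Y''| \le |Y \setminus Y''| \le \eta |Y|$, this gives
\[
|p^{-1}(Y')| \;\le\; \tfrac{3}{2}\epsilon S |Y| + C \eta S |Y|.
\]

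Next I would bound $|X|$ from below to convert $S|Y|$ into $|X|$. Restricting the sum $|X| = \sum_{y \in Y}|S_y|$ to $y \in Y''$ and using $|S_y| \ge (1-\delta)S$ there,
\[
|X| \;\ge\; |Y''|\,(1-\delta)S \;>\; (1-\eta)(1-\delta)\, S|Y| \;>\; \tfrac{1}{4}\, S|Y|,
\]
where the last inequality uses $\eta,\delta < 1/2$. Hence $S|Y| < 4|X|$.

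Combining the two displays yields
\[
|p^{-1}(Y')| \;<\; 6\epsilon |X| + 4C\eta |X| \;\le\; |X|(8\epsilon + 4C\eta),
\]
which is the desired bound. The argument is entirely routine; the only thing to verify carefully is that the constraints $\eta, \delta < 1/2$ are strong enough to guarantee the factor $\frac{1}{4}$ in the lower bound on $|X|$, and the stated constants in the conclusion absorb the resulting $\frac{3}{2}$ comfortably.
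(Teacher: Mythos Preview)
Your proof is correct and follows exactly the same approach as the paper: split $p^{-1}(Y')$ over $Y'\cap Y''$ and $Y'\setminus Y''$, bound each piece by the appropriate fiber-size estimate, and then lower-bound $|X|$ via the fibers over $Y''$. The only difference is cosmetic---you simplify the constants using $\delta,\eta<1/2$ along the way, whereas the paper first writes the clean ratio $(\epsilon(1+\delta)+C\eta)/((1-\eta)(1-\delta))$ and then implicitly extracts the stated bound.
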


\begin{proof} On the one hand
\[
|p^{-1}(Y')| = \sum_{y \in Y' \cap Y''} |S_y| +  \sum_{y \in Y' \cap (Y'')^c} |S_y| \le \epsilon|Y|(1+\delta)S + \eta|Y|CS = |Y|S(\epsilon(1+\delta)+C\eta).
\]
on the other hand
\[
|X| = \sum_{y \in Y} |S_y| =\sum_{y \in  Y''} |S_y| +  \sum_{y \in (Y'')^c} |S_y|  \ge (1-\eta)|Y|(1-\delta)S.
\]
Together we get
\[
|p^{-1}(Y')| \le  |X|(\epsilon(1+\delta)+C\eta)/(1-\eta)(1-\delta).
\]
\end{proof}

Lemmas \ref{ae-projection1}, \ref{ae-projection} allows us to pull back good properties of an image of a map to the source.

\end{section}

\section{Proof of Theorem \ref{testing-uniform}.}

Let $m \ge 1$, and let $\delta,\eta,  \epsilon >0$. Let  $X$ be an $(\eta, m)$-uniform subset of $G$ of density $\delta$  and let $f:X \to H$ be with $f_m(c)=0$ for $\epsilon$-a.e.  $c \in C_m(X)$.  All $O(1), \Omega(1)$ in this section are constants that depend {\em only} on $m$, and we suppress this dependence. 
Fix $a \in V$. Recall that $Y_a=\{\bar v: (a|\bar v) \in C_m(X)\}$, and that uniformity of $X$ ensures that $Y_a$ is a large set, 
For  $\bar v \in Y_a$ denote
\[
F_a(\bar v) = \sum_{\omega \in \{0,1\}^{m}\setminus \bar 0}(-1)^{|\omega|} f(a+\omega \cdot \bar v) 
\]

The main step is to show that for $\eta$ sufficiently small $F_a$ is constant  for $O( \epsilon)$ a.e. $\bar v \in Y_a$ (Proposition \ref{constant}). 
This allows one to define $h(a)$ as the common value of $F_a(\bar v)$. 
To show this we compare the value of $F_a$ at two different point  $\bar u, \bar v$ in $Y_a$ and show that the difference vanishes almost surely. We write the difference in many ways as an alternating sum of $f_m$ evaluated at a bounded collection of cubes. 
To be able to make use of the fact that $f_m$ vanishes almost surely on $X$ we need control over the parameters involved in the different ways of writing the difference as a sum of cubes - this is the main difficulty.  Next we define $h(a)$ as the common value of $F_a(\bar v)$ and show that $h_m$ vanishes on $C_m(V)$ (Proposition \ref{hmzero}). Finally we show that $h=f$ almost surely.  

We explain the strategy in  more detail in the case when $m=2$. In this case for any $w_1, w_2$  we can write
\begin{equation}\label{ex1}\begin{aligned}
F_a(v_1, v_2 )-F_a(u_1,u_2 )=& f_2(a+u_1|w_1-u_1, u_2) +  f_2(a+u_2|w_1, w_2- u_2) \\
&-  f_2(a+v_1|w_1-v_1, v_2) +  f_2(a+v_2|w_1, w_2- v_2).
\end{aligned}\end{equation}
But we can not choose $w_1, w_2$ in an arbitrary way since we are given that $f_2$ vanishes almost surely only for cubes in $X$.
To be able to use this we look at the four maps  $p^u_1, p^u_2, p_1^v, p_2^v: Y_a^2 \times V^2 \to C_2(V)$ taking $(v_1, v_2, u_1,u_2, w_1, w_2)$ to
\[
(a+u_1|w_1-u_1, u_2), (a+u_2|w_1, w_2- u_2), (a+v_1|w_1-v_1, v_2), (a+v_2|w_1, w_2- v_2)
\] 
respectively and show that for almost any $2$-cube in $C_2(X)$ the fibers of each maps are of essentially the same size. 
Denote by $A$ the set of $(v_1, v_2, u_1,u_2, w_1, w_2)$  in  $Y_a^2 \times V^2$ such that the image of all four maps is in $C_2(X)$. Uniformity implies that this set is large.   Since $f_2$ vanishes on almost surely on $C_2(X)$ we can deduce that for almost any $(v_1, v_2, u_1,u_2, w_1, w_2)$  in  $A$ the expression on the right hand side of equation \eqref{ex1} vanishes. It then remains to  show that for almost any $(v_1, v_2, u_1,u_2) \in Y_a^2$ there are many $w_1, w_2$ such that $(v_1, v_2, u_1,u_2, w_1, w_2) \in A$. 

The next step is then to show that $h_2$ vanishes on all $2$-cubes . We write $h_2(a|a_1, a_2)$ in many ways as an alternating sum of $h_2$ evaluated of fours cubes 
\[(a|y^0_1,y^0_2), (a+a_1|y^0_1+y^1_1,y^0_2+y^1_2),(a+a_2|y^0_1+y^2_1,y^0_2+y^2_2),(a+a_1+a_2|y^0_1+y^1_1+y^2_1,y^0_2+y^1_2+y^2_2)
\]
Once again we can no choose $y^0_1, y^0_2,y^1_1,y^1_2,y^2_1,y^2_2$ in an arbitrary way - we need to choose them such that 
all the points in all cubes fall in $X$ and also such that the cubes fall into the set where $f_m$ vanishes and where $h(a) =F_a(y^0_1, y^0_2)$ (and similarly for $h(a+a_1), h(a+a_2),h(a+a_1+a_2)$). Once again we use uniformity of $X$ to get control over the fibers of the maps associated with these conditions. Finally we show that $h=f$ almost surely. \\

We turn to the details of the proof. We start by showing that  for $\eta$ sufficiently small $F_a$ is constant  a.e. $\bar v \in Y_a$.
 
\begin{proposition}\label{constant}  There exists $B>0$ depending on $m$ such that for $\eta<(\epsilon\delta)^{B}$ the function  $F_a(\bar v )$ is constant $O( \epsilon)$ a.e. $\bar v \in Y_a$.
\end{proposition}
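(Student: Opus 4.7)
My plan is pairwise: I will show that for $O(\epsilon)$-a.e.\ pair $(\bar u,\bar v)\in Y_a \times Y_a$ one has $F_a(\bar u) = F_a(\bar v)$, and then conclude via the elementary inequality $\max_c p_c \ge \sum_c p_c^2$ — where $p_c$ denotes the $Y_a$-density of $F_a^{-1}(c)$, so that the right-hand side is exactly the probability $\Pr[F_a(\bar u) = F_a(\bar v)]$ under two independent uniform draws from $Y_a$ — that $F_a$ takes a common value on a $(1 - O(\epsilon))$-fraction of $Y_a$.

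The core algebraic identity, generalising the $m=2$ display given in the proof sketch above, is
\begin{align*}
F_a(\bar v) - F_a(\bar u) &= \sum_{i=1}^m f_m(a+u_i | w_1,\ldots,w_{i-1},\, w_i-u_i,\, u_{i+1},\ldots,u_m) \\
&\quad {}- \sum_{i=1}^m f_m(a+v_i | w_1,\ldots,w_{i-1},\, w_i-v_i,\, v_{i+1},\ldots,v_m),
\end{align*}
valid for every auxiliary $\bar w \in V^m$. It follows by telescoping the identity $f_m(a|\bar w) - f_m(a|\bar u) = \sum_i f_m(a+u_i | w_1,\ldots,w_{i-1},\, w_i-u_i,\, u_{i+1},\ldots,u_m)$ one coordinate at a time, subtracting the analogous identity with $\bar v$ in place of $\bar u$, and using $f_m(a|\cdot) - f(a) = F_a(\cdot)$.

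Consequently, whenever $\bar w$ is chosen so that all $2m$ cubes on the right-hand side lie in $C_m(X)$ \emph{and} $f_m$ vanishes on each of them, we get $F_a(\bar u) = F_a(\bar v)$. Let $G_a \subset Y_a \times Y_a \times V^m$ collect the triples satisfying the former condition. Viewed as a system of affine forms in $(\bar u,\bar v,\bar w)$, the union of vertex forms of the $2m$ cubes has CS-complexity $\le m$: each individual cube has complexity $\le m$ by Lemma \ref{cube-complexity}, and the glueing of the $2m$ cubes along the $\bar w$-axis is handled by the structural lemmas of Section \ref{algebra-lemmas} (in particular Lemmas \ref{layer} and \ref{add}). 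Lemma \ref{counting}(1) then gives $|G_a| = (\delta^N + O(\eta))|V|^{3m}$ for some $N = N(m)$, and Lemma \ref{counting}(3), with the forms defining $Y_a \times Y_a$ as the $r_j$'s and the remaining vertex forms as the $s_i$'s, shows that for $O(\delta^{-N}\sqrt{\eta})$-a.e.\ $(\bar u,\bar v)\in Y_a\times Y_a$ the $\bar w$-fiber has the expected size $(\delta^{N'} + O(\eta^{1/4}))|V|^m$.

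Finally, for each of the $2m$ cubes in the decomposition, Lemma \ref{fubini} applied to the corresponding projection $G_a \to C_m(X)$ — whose fiber sizes are controlled by Lemma \ref{counting}(1) — combined with the hypothesis that $f_m$ vanishes on $\epsilon$-a.e.\ $c \in C_m(X)$, forces $f_m$ to vanish on that cube for $O(\epsilon)$-a.e.\ triple in $G_a$; a union bound over the $2m$ cubes then yields $F_a(\bar u) = F_a(\bar v)$ for $O(\epsilon)$-a.e.\ $(\bar u,\bar v,\bar w)\in G_a$. Combining this with the fiber size estimate via Lemma \ref{ae-projection} pushes the conclusion down to $O(\epsilon)$-a.e.\ $(\bar u,\bar v) \in Y_a \times Y_a$, provided $\eta \le (\epsilon\delta)^B$ for some $B = B(m)$ large enough to absorb the accumulated $\delta^{-\ast}$ and $\eta^{1/4}$ losses. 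The main obstacle is combinatorial bookkeeping: verifying CS-complexity of the full vertex system in $(\bar u,\bar v,\bar w)$ by iterated use of Section \ref{algebra-lemmas}, and tracking how errors compose across the several applications of Lemmas \ref{counting}, \ref{fubini}, and \ref{ae-projection}, so that the final loss is genuinely $O(\epsilon)$ with an exponent $B$ depending only on $m$.
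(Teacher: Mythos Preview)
Your outline matches the paper's: the same telescoping identity, the same auxiliary set $G_a$ (the paper calls it $A$), and the same endgame of pushing down to $Y_a\times Y_a$. The gap is in the sentence ``Lemma~\ref{fubini} applied to the corresponding projection $G_a \to C_m(X)$ --- whose fiber sizes are controlled by Lemma~\ref{counting}(1) --- forces $f_m$ to vanish on that cube for $O(\epsilon)$-a.e.\ triple in $G_a$.'' This is precisely where the $\delta$-independence of the constant is at stake, and your argument as written does not secure it.

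Here is why. Fix the projection onto (say) the first $u$-cube. Fixing a cube $c=(s|\bar t)\in C_m(X)$ determines $u_1,\dots,u_m,w_1$; the remaining vertex-forms are affine in the free variables $\bar v,w_2,\dots,w_m$, but several of them become \emph{constants} --- for instance the vertices of the $i=2$ $u$-cube with $\omega_2=0$ reduce to $a+t_2+\sum_{j\ge 3}\omega_j t_j$. These constants are \emph{not} vertices of $c$, so membership in $X$ is an extra constraint on $c$. Hence the image of $G_a\to C_m(X)$ is only a $\delta^{O(1)}$-dense subset of $C_m(X)$, and although fibers over the image are indeed of uniform size (so Lemma~\ref{counting}(1) applies there), the $\epsilon$-a.e.\ hypothesis is on all of $C_m(X)$, not on this subset. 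Pulling back therefore costs a factor $\delta^{-O(1)}$, yielding only $O(\delta^{-O(1)}\epsilon)$-a.e.\ on $G_a$. The paper flags exactly this in the Remark after Lemma~\ref{A}: the direct approach gives the weaker theorem in which the threshold $\alpha$ depends on $\delta$.

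To get the genuine $O(\epsilon)$ bound the paper does not analyse the single projection $G_a\to C_m(X)$ at all. Instead it runs an induction through intermediate sets $A_m\supset A_{m-1}\supset\cdots\supset A_1=A$, where $A_j$ records only the cubes with index $i\ge j$. At the base ($A_m$: two cubes sharing an $(m{-}1)$-face) and at each inductive step it uses the doubling Lemma~\ref{double} to verify that the system obtained by duplicating the \emph{newly introduced} variables still has CS-complexity $\le m$, and then Lemma~\ref{counting}(3) to show the relevant fibers are $(\delta^C+O(\eta^{1/4}))$-homogeneous over $(1-O(\delta^{-O(1)}\sqrt\eta))$ of the base. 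This is what keeps the error at $O(\epsilon)+O(\delta^{-O(1)}\eta^{\Omega(1)})$ rather than $O(\delta^{-O(1)}\epsilon)$; your invocation of Lemmas~\ref{layer} and~\ref{add} for the undoubled system does not substitute for it.
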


\begin{proof}
Fix $a \in V$. Observe that for any $w \in V$ we have 
\[
 f_m(a|\bar v) = f_m(a| w, v_2, \ldots, v_m)- f_m (a+v_1| w-v_1, v_2, \ldots, v_m),
\]
and similarly for any $i=2, \ldots, m$. 
Note that if $(a| w, v_2, \ldots, v_m),  (a+w| v_1-w, v_2, \ldots, v_m) \in C_m(X)$ then so is $ (a|\bar v)$.  

Thus we get that  $w_1, \ldots, w_m \in G$ we have 
\[\begin{aligned}
&f_m(a|v_1, \ldots, v_m)= f_m(a|w_1,v_2, \ldots, v_m)-f_m(a+v_1|w_1-v_1,v_2, \ldots, v_m)  \\
&= f_m(a|w_1,w_2, \ldots, v_m)- f_m(a+v_2|w_1,w_2-v_2, \ldots, v_m)-f_m(a+v_1|w_1-v_1,v_2, \ldots, v_m) \\
&=\qquad  \vdots \\
&= f_m(a|w_1,w_2, \ldots, w_m) - \sum_{i=1}^m  f_m(a+v_i|w_1,\ldots, w_{i-1}, w_i-v_i,v_{i+1} \ldots, v_m),
\end{aligned}\]
so that
\[\begin{aligned}
&F_a(\bar v )-F_a(\bar u )= \\
& \sum_{i=1}^m  f_m(a+u_i|w_1,\ldots, w_{i-1}, w_i-u_i,u_{i+1} \ldots, u_m)
- \sum_{i=1}^m f_m (a+v_i|w_1,\ldots, w_{i-1}, w_i-v_i,v_{i+1} \ldots, v_m)
\end{aligned}
\]
Consider the collection of affine forms associated with the cubes
\[
(*) \quad \{(a+u_i|w_1,\ldots, w_{i-1}, w_i-u_i,u_{i+1} \ldots, u_m), (a+v_i|w_1,\ldots, w_{i-1}, w_i-v_i,v_{i+1} \ldots, v_m)\}_{i=1}^m  
\]

\begin{lemma}
1) The system $(*) \cup \{a\}$ is of complexity $\le m$ in $a, \bar v, \bar u, \bar w$. \\
2) For fixed $a$, the system $(*)$ is of complexity $\le m$ in $\bar v, \bar u, \bar w$.
\end{lemma}

\begin{proof}
Note that  the system in 1) is equivalent to the collection of  forms associated with the cubes $(a| w_1,\ldots ,w_{i-1},u_i,\ldots ,u_m)$ and $(a| w_1,\ldots, w_{i-1},v_i, \ldots ,v_m)$ for $i=1, \ldots, m$, and the system in $2)$ to 
$(0| w_1,\ldots ,w_{i-1},u_i,\ldots ,u_m)'$ and $(0| w_1,\ldots, w_{i-1},v_i, \ldots ,v_m)'$ for $i=1, \ldots, m$. 
The following claim would conclude the proof:

\begin{claim} Let $\{z_1,\ldots,z_t\}$ be variables. The system of linear forms $\{\sum_{i \in T} z_i: |T|\le m\}$ has CS complexity $\le m$. Similarly, if $z_0$ is a new variable then $\{z_0+\sum_{i \in T} z_i: T \subset \{1, \ldots, t\}, |T|\le m\}$ has CS complexity $\le m$.
 \end{claim}

\begin{proof} We prove this by induction on $m$ and $t$. For $m=1$ any $t$ the claim is obvious. For any $m$, $t \le m$ the claim follows from  Lemmas \ref{cube-complexity}, \ref{Y_x}. Fix $m>1$ and assume the claim for $t \ge m$.  Consider now the system
on $t+1$ variables $\{z_1,\ldots,z_{t+1}\}$, $\{\sum_{i \in T} z_i: |T|\le m\}$. We can write this as $S_1 \cup S_2$ where 
\[
S_1=\{\sum_{i \in T} z_i: T \subset  [t]\}, \ S_2= \{z_{t+1}+ \sum_{i \in T} z_i: T \subset [t], |T| \le m-1\}
\]
By symmetry it suffices to show that the complexity of any form in $S_2$ is $\le m$. By the induction hypothesis the collection $S_2$ 
 is of complexity $\le m-1$. Fix  a form in $S_2$ and take an $m-1$ partition for the rest of the forms in $S_2$. Now add to this the set $S_1$ to obtain an $m$-partition. 
\end{proof}
Let $Z= \{u_1,\ldots,u_m, v_1,\ldots,v_m,w_1,\ldots,w_m\}$. We apply the Claim to $Z \cup \{a\}$ and $Z$ respectively. 
\end{proof}

By Lemma \ref{counting} we get:
\begin{corollary}
 The set 
\[ \begin{aligned}
A=\{(\bar u, \bar v, \bar w): &\{(a+u_i|w_1,\ldots, w_{i-1}, w_i-u_i,u_{i+1} \ldots, u_m), \\
&(a+v_i|w_1,\ldots, w_{i-1}, w_i-v_i,v_{i+1} \ldots, v_m)\}_{i=1}^m 
 \in C_m(X)\}
\end{aligned}\]
is of size $(\delta^{O(1)} +O(\eta))|V|^{3m}$.
\end{corollary}

\begin{lemma}\label{A} For $(O(\epsilon)+O(\delta^{-O(1)}\eta^{\Omega(1)}))$-a.e $(\bar u, \bar v, \bar w) \in A$ we have 
\[
(*) \quad f_m(a+u_i|w_1,\ldots, w_{i-1}, w_i-u_i,u_{i+1} \ldots, u_m)=f_m(a+v_i|w_1,\ldots, w_{i-1}, w_i-v_i,u_{i+1} \ldots, v_m)=0
\]
for all $1 \le i \le m$. 
\end{lemma}

\begin{remark} If one is interested in a weaker form of Theorem \ref{testing-uniform} where $\alpha$ is allowed to depend on $\delta$ (polynomially)
then  one can obtain the Lemma quickly by looking at the maps  $p^u_i, p^v_i: V^{3m} \to  C_m(V)$
\[
p^u_i: (\bar u, \bar v, \bar w) \mapsto (a+u_i|w_1,\ldots, w_{i-1}, w_i-u_i,u_{i+1} \ldots, u_m).
\]
and similarly $p^v_i$. For each such map the sieve over a point in $C_m(X)$ is of size $|V|^{2m-1}$. Since $|A|= (\delta^{O(1)} +O(\eta))|V|^{3m}$ we get 
 that  $O(\delta^{-O(1)}\epsilon)$ a.e   $(\bar u, \bar v, \bar w) \in A$ we have $f_m(a+u_i|w_1,\ldots, w_{i-1}, w_i-u_i,u_{i+1} \ldots, u_m)$. By the union bound this holds for all $i$. 
\end{remark}

\begin{proof}[Proof of Lemma \ref{A}]
We prove this by induction. The first step is the following Lemma:
\begin{lemma}\label{A_m}
Consider the set
\[
A_m=\{ u_m, v_m, w_1, \ldots, w_m  : (a+u_m|w_1,\ldots, ,w_{m-1}, w_m-u_m), (a+v_m|w_1,\ldots, w_{m-1}, w_m-v_m) \in C_m(X)\}
\]
Then for $(O(\epsilon)+O(\delta^{-O(1)}\eta^{\Omega(1)}))$ a.e. $u_m, v_m , \bar w \in A_m$ we have
$$f_m(a+u_m|w_1,\ldots, w_{m}-u_m) =f_m(a+v_m|w_1,\ldots, w_{m}-v_m) =0.$$ 
\end{lemma}

\begin{proof}
Consider the map $p_m:V^{m+2} \to C_m(V)$ defined by  
\[
(u_m, v_m , \bar w) \mapsto (a+u_m|w_1,\ldots,w_{m-1}, w_m-u_m).
\] 
Fix a cube 
$(s|\bar t) \in C_m(X)$ and consider the intersection $p_m^{-1}((s|\bar t)) \cap A_m$.  We will show that  the sizes of these fiber are almost surely of essentially the same size.  More precisely we show that for $O(\delta^{-O(1)} \sqrt \eta)$ a.e. $(s|\bar t) \in C_m(X)$  we have $p_m^{-1}((s|t)) \cap A_m$ is of size $((\delta^{2^{m-1}})+O(\eta^{1/4}))|V|$.  It will then follow by Lemma \ref{ae-projection} that  for $(O(\epsilon)+O(\delta^{-O(1)}\eta^{1/4}))$ a.e. $u_m, v_m , \bar w \in A_m$ we have
$f_m(a+u_m|w_1,\ldots, ,w_{m-1}, w_m-u_m) =0$. Similarly we will have that  for $(O(\epsilon)+O(\delta^{-O(1)}\eta^{1/4}))$ a.e. $u_m, v_m , \bar w \in A_m$,  have
$f_m(a+v_m|w_1,\ldots, ,w_{m-1}, w_m-v_m) =0$. Thus for $(O(\epsilon)+O(\delta^{-O(1)}\eta^{1/4}))$ a.e. $u_m, v_m , \bar w \in A_m$ we have
$$f_m(a+u_m|w_1,\ldots, ,w_{m-1}, w_m-u_m) =f_m(a+v_m|w_1,\ldots, ,w_{m-1}, w_m-v_m) =0.$$ 
\ \\
We now show that the sizes of the fibers  $p_m^{-1}((s|\bar t)) \cap A_m$ are almost surely of essentially the same size. 
Consider the system of affine forms
\[
 (a+u_m|w_1,\ldots, ,w_{m-1}, w_m-u_m), (a+v_m|w_1,\ldots, w_{m-1}, w_m-v_m) ,  (a+v'_m|w_1,\ldots, w_{m-1}, w_m-v'_m). 
\]
We claim that this system is of complexity $m$. Indeed  we can rewrite this system as 
\[
 (a+u_m|w_1,\ldots, ,w_{m-1}, w_m-u_m), (a+v_m|w_1,\ldots, w_{m-1}) ,  (a+v'_m|w_1,\ldots, w_{m-1}). 
\]
Since $ (a+u_m|w_1,\ldots, ,w_{m-1}, w_m-u_m)$ is of complexity $m$, and  $(a+v_m|w_1,\ldots, ,w_{m-1})$ is of complexity $m-1$, and both together are a subset of the forms in $A$ so of complexity $\le m$. by Lemma \ref{double} the above system is of complexity $m$. It follows by Proposition \ref{counting} that for $O(\delta^{-O(1)}\sqrt \eta)$ a.e. $(s|\bar t) \in C_m(X)$  we have $p_m^{-1}((s|t)) \cap A_m$ is of size $(\delta^{2^{m-1}}+O(\delta^{-O(1)}\eta^{1/4}))|V|$. 
\end{proof}

Note that $A_m$ is a parametrization for $B_m$ - the set of all pairs of $m$ cubes $c, c'$ in $X$ that share an $m-1$ dimensional face, and by the above lemma $f_m(c)=f_m(c')=0$ on $(O(\epsilon)+O(\delta^{-O(1)}\eta^{\Omega(1)}))$ a.e. on such configurations.

We proceed by induction: Let 
\[ \begin{aligned}
A_j=\{(u_j, \ldots, u_m, v_j, \dots, v_m,  \bar w): &\{(a+u_i|w_1,\ldots, w_{i-1}, w_i-u_i,u_{i+1} \ldots, u_m), \\
&(a+v_i|w_1,\ldots, w_{i-1}, w_i-v_i,v_{i+1} \ldots, v_m)\}_{i\ge j} 
 \in C_m(X)\}
\end{aligned}\]

Consider $B_j=B_j(X)$ - collection of configurations  of $2(m-j+1)$ $m$-cubes $(c_i)_{1=1}^{2(m+j-1)}$ in $C_m(X)^{2(m+j-1)}$ associated with  
with the forms
\[\{(a+u_i|w_1,\ldots, w_{i-1}, w_i-u_i,u_{i+1} \ldots, u_m), 
(a+v_i|w_1,\ldots, w_{i-1}, w_i-v_i,v_{i+1} \ldots, v_m)\}_{i\ge j},
\]
and let $B_j(V)$ be the same collection of configurations with points in $V$ (this is just the solutions to a collection of linear equations on  $V$).

We assume that for $(O(\epsilon)+O(\delta^{-O(1)}\eta^{\Omega(1)}))$-a.e  $(c_i)_{1=1}^{2(m+j-1)} \in B_j$ , we have  $f_m(c_i)=0$ for all $i$. 

Now we wish to show that for $(O(\epsilon)+O(\delta^{-O(1)}\eta^{\Omega(1)}))$ a.e. $(u_{j-1}, \ldots, u_m, v_{j-1}, \dots, v_m,  \bar w)\in A_{j-1}$ we have
$$f_m(a+u_i|w_1,\ldots, w_{i-1}, w_i-u_i,u_{i+1} \ldots, u_m)= f_m(a+u_i|w_1,\ldots, w_{i-1}, w_i-u_i,u_{i+1} \ldots, u_m)=0$$
for $i\ge j-1$. 

We observe that the forms in $A_{j-1} \cup \{(a|w_1, \ldots, w_m)'\}$ is a subset of the forms in $A$ and thus of complexity $m$. Furthermore, the forms in $A_{j-1}$ are the union of the forms in $A_j$ and the forms  associated with the cubes
\[
(a+u_{j-1}|w_1,\ldots,w_{j-2}, u_j , \ldots, u_{m}), (a+v_{j-1}|w_1,\ldots,w_{j-2}, v_j ,\ldots, v_{m})
\]

In the case when $m=2$,
\[
A_2=\{(u_2, v_2,w_1, w_2): (a+u_2|w_1,w_2-u_2), (a+v_2|w_1,v_2-u_2) \in C_m(X)\}
\]
and we adjoin to the system of  forms in $A_2 \cup \{(a|w_1, w_2)'\}$ the forms  associated to the cubes
\[
 (a+u_1|w_1-u_1,u_2),  (a+v_1|w_1-v_1,v_2) 
\]
So the new forms we add are  the forms 
\[
a+u_2+w_1, a+u_1+u_2, a+v_2+w_1, a+v_1+v_2, 
\]
which is a system of complexity $1$ in $w_1, u_1, u_2, v_1, v_2$.

Now for $m\ge 2$, $A_{j-1} \cup \{(a|w_1,\ldots, w_m)'\}$ is obtain from $A_j \cup \{(a|w_1,\ldots, w_m)'\}$ by adding the forms 
\[
(a+u_{j-1}|w_1,\ldots,w_{j-2}, u_j , \ldots, u_{m}), (a+v_{j-1}|w_1,\ldots,w_{j-2}, v_j ,\ldots, v_{m}).
\]
and the new forms we add are which is a system of complexity $m-1$ by Lemma \ref{double-u-v}.

Consider now the system of forms in $A_j \cup \{(a|w_1,\ldots, w_m)'\}$  adjoined with the forms associated to the cubes
\begin{equation}\label{ooof}\begin{aligned}
&(a+u_{j-1}|w_1,\ldots,w_{j-2}, u_j , \ldots, u_{m}), (a+v_{j-1}|w_1,\ldots,w_{j-2}, v_j ,\ldots, v_{m}) \\
&(a+u'_{j-1}|w_1,\ldots,w_{j-2}, u_j , \ldots, u_{m}), (a+v'_{j-1}|w_1,\ldots,w_{j-2}, v_j ,\ldots, v_{m})
\end{aligned}\end{equation}
By Lemma \ref{double} this system is of complexity $\le m$, and thus $A_{j-1}$  adjoined with the forms in \eqref{ooof}
is of complexity $ \le m$. 

Consider the natural map $p_{j-1}:V^{m+2(m-j+2)} \to B_j(V)$. Fix a configuration of cubes $c$  in 
$ B_j(X)$ and consider the intersection $p_m^{-1}(c) \cap A_{j-1}$.   It follows by Proposition \ref{counting} that for $O( \delta^{-O(1)}\sqrt \eta)$ a.e. $c \in B_j(X)$  we have $p_{j-1}^{-1}(c) \cap A_{j-1}$ is of size $((\delta^{2\cdot 2^{m-1}})+O(\eta^{1/4}))|V|^2$. Thus by Lemma \ref{ae-projection} for $(O(\epsilon)+O(\delta^{-O(1)}\eta^{\Omega(1)}))$ a.e.  $(u_{j-1}, \ldots, u_m, v_{j-1}, \dots, v_m,  \bar w)\in A_{j-1}$ we have 
\[
f_m(a+u_i|w_1,\ldots, w_{i-1}, w_i-u_i,u_{i+1} \ldots, u_m)=f_m(a+v_i|w_1,\ldots, w_{i-1}, w_i-v_i,u_{i+1} \ldots, v_m)=0
\]
for all $j-1 \le i \le m$. 
\end{proof}

Let $A'$ denote the set of $(\bar u, \bar v, \bar w) \in A$ for which $(*)$ in Lemma \ref{A} holds. Then $|A'|=(O(\epsilon)+O(\delta^{-O(1)}\eta))|A|$. 
For $\bar u, \bar v \in Y_a$, consider the sets:
\[
A_{\bar v, \bar u} =\{ \bar w: (\bar u, \bar v, \bar w) \in A\}.
\]
It remains to show that for $(O(\epsilon)+O(\delta^{-O(1)}\eta^{\Omega(1)}))$ a.e. $\bar u,\bar v \in Y_a$ the set $A_{\bar v, \bar u}  \cap A'$ is not empty. 
 We prove  by induction that $O(\delta^{-O(1)}\eta^{\Omega(1)}))$ a.e. $\bar u, \bar v \in Y_a$, $|A_{\bar v, \bar u}|= (\delta^C+O(\eta))|V|^m$, for some $C>0$;
 in particular not empty for $\eta$ sufficiently small (polynomially in $\delta$).

\begin{lemma}
For $O(\delta^{-O(1)}\eta^{\Omega(1)}))$ a.e. $\bar u,\bar v \in Y_a$ 
the set  
\[
D_1(\bar u, \bar v)= \{ w_1 \in V: (a+u_1|w_1-u_1,\ldots, ,u_m),(a+v_1|w_1-v_1,\ldots ,v_m) \in C_m(X)\}
\]
 is of size $(\delta^C+O(\eta))|V|$.
\end{lemma}

\begin{proof}
Define the set
\[
D_1= \{ w_1 \in V, \bar u, \bar v \in Y_a: (a+u_1|w_1-u_1,\ldots, ,u_m),(a+v_1|w_1-v_1,\ldots ,v_m) \in C_m(X)\}
\]
Consider the map $r: D_1 \to  Y_a^2$ defined by $(w_1, \bar u, \bar v) \mapsto  (\bar u, \bar v)$.  The collection of forms in $D_1$ is of complexity $m$ as a subset of the forms in $A$. Further more, we can write the forms in $D_1$ as the union of the forms associated with $(a| \bar v)', (a|\bar u)'$ and the firms associated with 
\begin{equation}\label{qqq}
(a+w_1| u_2, \ldots, u_m), (a+w_1| v_2, \ldots, v_m).
\end{equation}
Note that the forms in \eqref{qqq} are a system of complexity $m-1$, thus by Lemma \ref{double}, the system 
\[
(a| \bar v)', (a|\bar u)', (a+w_1| u_2, \ldots, u_m), (a+w_1| v_2, \ldots, v_m).
(a+w'_1| u_2, \ldots, u_m), (a+w'_1| v_2, \ldots, v_m).
\]
is of complexity $\le m$ and by Lemma \ref{counting} $O(\delta^{-O(1)}\sqrt \eta))$ a.e. $\bar u,\bar v \in Y_a$ 
the set  
\[
D_1(\bar u, \bar v)= \{ w_1 \in G: (a+u_1|w_1-u_1,\ldots, ,u_m),(a+v_1|w_1-v_1,\ldots ,v_m) \in C_m(X)\}
\]
is of size $(\delta^C+O(\eta^{1/4}))|G|$.
\end{proof}

Now consider the set.
\[\begin{aligned}
D_j= \{ w_1, \ldots, w_j \in G, \bar u, \bar v \in Y_a: &(a+u_i|w_1,\ldots,w_{i-1}, w_i-u_i ,u_{i+1}, \ldots, u_m), \\
&(a+v_i|w_1,\ldots,w_{i-1}, w_i-v_i ,v_{i+1}, \ldots, u_m)\in C_m(X), 1 \le i\le j\}
\end{aligned}\]

For $(\bar u, \bar v, w_1, \ldots, w_{j-1}) \in D_{j-1}$ consider the set
\[\begin{aligned}
D_j(\bar u, \bar v, w_1, \ldots, w_{j-1}) = \{ w_{j} \in V: &(a+u_j|w_1,\ldots,w_{j-1}, w_j-u_j ,u_{j+1}, \ldots, u_m),\\
&(a+v_j|w_1,\ldots,w_{j-1}, w_j-v_j ,v_{j+1}, \ldots, u_m) \in C_m(X)\}
\end{aligned}
\]
by the same argument as above we get that $(1+O(\delta^{-O(1)}\sqrt \eta))$ a.e. $(\bar u, \bar v, w_1, \ldots, w_{j-1}) \in D_{j-1}$
$D_j(\bar u, \bar v, w_1, \ldots, w_{j-1})$ is of size $(\delta^C+O(\eta^{1/4}))|V|$. This completes the proof of Proposition \ref{constant}.
\end{proof}

\begin{definition}
Define $h(a)$ to be the common values of $F_a(\bar v)$. 
\end{definition}

\begin{proposition}\label{hmzero} There exist $\alpha, R>0$ such that for any $\epsilon< \alpha$  if $\eta<(\epsilon\delta)^{R}$ then we have  $h_m \equiv 0$.
\end{proposition}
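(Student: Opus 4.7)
\emph{Proof plan.} Fix $a, a_1,\ldots, a_m \in V$ and write $\bar a = (a_1,\ldots, a_m)$. Introduce auxiliary vectors $\bar y^0, \bar y^1,\ldots, \bar y^m \in V^m$ and, for each $\omega \in \{0,1\}^m$, set $\bar v^\omega := \bar y^0 + \sum_{i=1}^m \omega_i \bar y^i$. Since $\nu\cdot \bar v^\omega = \nu\cdot \bar y^0 + \omega\cdot(\nu\cdot \bar y^1,\ldots, \nu\cdot \bar y^m)$ for every $\nu$, expanding $F$ and interchanging the $\omega$- and $\nu$-sums yields the purely formal identity
\[
\sum_{\omega\in\{0,1\}^m}(-1)^{|\omega|} F_{a+\omega\cdot\bar a}(\bar v^\omega) \;=\; \sum_{\nu\in\{0,1\}^m\setminus\bar 0}(-1)^{|\nu|} f_m\bigl(a+\nu\cdot\bar y^0 \,\big|\, a_1+\nu\cdot\bar y^1,\ldots, a_m+\nu\cdot\bar y^m\bigr),
\]
valid whenever both sides are defined. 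Call $\bar y = (\bar y^0,\ldots,\bar y^m)$ \emph{admissible} if (A) for every $\omega$, $\bar v^\omega\in Y_{a+\omega\cdot\bar a}$ and $F_{a+\omega\cdot\bar a}(\bar v^\omega) = h(a+\omega\cdot\bar a)$, and (B) for every $\nu \in \{0,1\}^m \setminus \bar 0$ the cube on the RHS lies in $C_m(X)$ and is good for $f$. If any admissible $\bar y$ exists, (A) turns the LHS into $h_m(a|\bar a)$ while (B) makes the RHS vanish, proving $h_m(a|\bar a) = 0$.

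The vertices of all cubes occurring in (A) and (B) together are precisely the affine forms
\[
a + \omega\cdot\bar a + \nu\cdot\bar y^0 + \omega\cdot(\nu\cdot\bar y^1,\ldots, \nu\cdot\bar y^m), \qquad \omega \in \{0,1\}^m,\ \nu \in \{0,1\}^m \setminus \bar 0,
\]
in the variables $\bar y^0,\ldots,\bar y^m$. After swapping the labels $\omega \leftrightarrow \nu$ this is the system of Lemma \ref{admisible}(1) (with $\epsilon \equiv 1$), which has CS-complexity $\le m$. By Lemma \ref{counting}(1) the number of $\bar y$'s for which all these vertices lie in $X$ is $(\delta^R + O(\eta))|V|^{m(m+1)}$ for some $R = R(m)$. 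To upgrade \emph{vertices in $X$} to \emph{admissible} we exclude two kinds of bad $\bar y$: those for which some $\bar v^\omega$ falls in the $O(\epsilon)$-bad subset of $Y_{a+\omega\cdot\bar a}$ supplied by Proposition \ref{constant}, and those for which some cube in (B) falls in the $\epsilon$-bad subset of $C_m(X)$. Each is controlled by the natural projection $\bar y \mapsto \bar v^\omega$ or $\bar y \mapsto (\text{cube})$, and doubling the relevant variables via Lemma \ref{double} together with Lemma \ref{admisible}(2)--(3) keeps the resulting doubled systems of CS-complexity $\le m$. Lemma \ref{counting}(2)--(3) then implies that outside an $O(\delta^{-O(1)}\eta^{\Omega(1)})$-exceptional set of image points the fibres have essentially constant size, and Lemma \ref{ae-projection} pulls the a.e.\ good properties back to the parameter space $\bar y$.

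A union bound over the $2^{m+1}-1$ conditions in (A) and (B) shows that the bad $\bar y$'s comprise at most an $O_m\bigl(\epsilon + \delta^{-O(1)}\eta^{\Omega(1)}\bigr)$-fraction of the $(\delta^R+O(\eta))|V|^{m(m+1)}$ candidates with vertices in $X$. Choosing $\alpha, R>0$ depending only on $m$ so that $\epsilon < \alpha$ and $\eta < (\epsilon\delta)^R$ force this fraction to be strictly less than $1$, admissible $\bar y$'s exist and hence $h_m(a|\bar a) = 0$ for arbitrary $(a,\bar a)$. The main obstacle throughout is the CS-complexity bookkeeping — verifying that the cubes of (A), those of (B), and all the doublings used in the fibre estimates collectively remain of complexity $\le m$; Lemma \ref{admisible} was set up precisely to supply this, after which the proof is a natural one-level-up analog of the argument for Proposition \ref{constant}.
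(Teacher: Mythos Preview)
Your proposal is correct and follows essentially the same approach as the paper: the paper introduces the identical auxiliary parameter set $B$ of $(\bar y^0,\ldots,\bar y^m)$ with all vertices in $X$, uses Lemma \ref{admisible}(1) and Lemma \ref{counting} to show $|B|=(\delta^{O(1)}+O(\eta))|V|^{m(m+1)}$, and then employs the maps $\pi_\nu$ (your condition (A)) and $p_\omega$ (your condition (B)) with fibre-size control coming from Lemma \ref{admisible}(2)--(3) and Lemma \ref{counting}(3), exactly as you describe. Your explicit algebraic identity relating the LHS and RHS, and the observation that the $\omega\leftrightarrow\nu$ swap matches the paper's indexing, make the underlying mechanism more transparent than the paper's presentation.
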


\begin{proof} 
For any fixed $(a_0,\bar a)\in V^{m+1}$ consider the
system of affine forms in $(\bar y^0,\ldots, \bar y^m)$:
\[
(a_0  +\nu \cdot \bar a+  \omega \cdot \bar y^0+ \nu \cdot (\omega \cdot \bar y^1,\ldots, \omega \cdot \bar y^m))_{\bo \in \2^m \setminus \bar 0,\  \nu \in \2^m}
 \]
 By Lemma \ref{admisible} and Lemma \ref{counting} the set 
 \[
 B= \{(\bar y^0,\ldots, \bar y^m): (a_0  +\nu \cdot \bar a+  \omega \cdot \bar y^0+ \nu \cdot (\omega \cdot \bar y^1,\ldots, \omega \cdot \bar y^m)) \in X, \ \forall \bo \in \2^m \setminus \bar 0,\  \nu \in \2^m \} 
 \]
 is of size $$(\delta^{2^{m}\cdot (2^{m}-1)}+O(\eta))|V|^{m(m+1)}.$$

Consider the maps $\pi_{\nu}, p_{\omega}$ on $B$ 
\[\begin{aligned}
&\pi_{\nu}: (\bar y^0,\ldots, \bar y^m) \mapsto
(a_0  +\nu \cdot \bar a+  \omega \cdot \bar y^0+ \nu \cdot (\omega \cdot \bar y^1,\ldots, \omega \cdot \bar y^m))_{ \bo \in \2^m \setminus \bar 0} \\
&p_{\omega}: (\bar y^0,\ldots, \bar y^m) \mapsto 
 (a_0  +\nu \cdot \bar a+  \omega \cdot \bar y^0+ \nu \cdot (\omega \cdot \bar y^1,\ldots, \omega \cdot \bar y^m))_{\nu \in \2^m }
 \end{aligned}\]
 Consider $\nu=\bar 0$ and consider $\pi_{\bar 0}: B \to C_m(X)'$.  Fix a point $(a_0  + \omega \cdot \bar y^0)_{\bo \in \2^m}$ in the image and consider the system
 \[\begin{aligned}
&( \omega \cdot \bar y^0+ \nu \cdot (\omega \cdot \bar y^1,\ldots, \omega \cdot \bar y^m))_{\bo \in \2^m \setminus \bar 0,\  \nu \in \2^m}\\
&( \omega \cdot \bar y^0+ \nu \cdot (\omega \cdot \bar z^1,\ldots, \omega \cdot \bar z^m))_{\bo \in \2^m \setminus \bar 0,\  \nu \in \2^m}
\end{aligned} \]
By Lemma \ref{admisible} this system is of complexity $\le m$. It follows by Lemma \ref{counting} that $O(\delta^{-O(1)}\sqrt \eta)$  a.e. $(a_0  + \omega \cdot \bar y^0)_{\bo \in \2^m\setminus \bar 0} \in C_m(X)'$ we have 
$\pi^{-1}_{\bar 0}(a_0  + \omega \cdot \bar y^0)_{\bo \in \2^m\setminus \bar 0} \bigcap B$ is of size $(\delta^{C}+O(\eta^{1/4}))|V|^{m^2}$.  

 Since the function $h(a)=F_a(\bar v )$ for  $(O(\epsilon)+O(\delta^{-O(1)}\eta^{\Omega(1)}))$ a.e. $\bar v\in Y_a$, we get that $(O(\epsilon)+O(\delta^{-O(1)}\eta^{\Omega(1)}))$ a.e.
 $ (\bar y^0,\ldots, \bar y^m) \in B$ we have $h(a)=f_m'(\pi_{\bar 0}(\bar y^0,\ldots, \bar y^m))$.  Similarly for all $\nu$ it holds that $
 (O(\epsilon)+O(\delta^{-O(1)}\eta^{\Omega(1)}))$ a.e.
 $ (\bar y^0,\ldots, \bar y^m) \in B$ we have $h(a+\nu \cdot a)=f_m'(\pi_{\nu}(\bar y^0,\ldots, \bar y^m))$. \\

Now consider  $\bo=(1\bar 0)$, and the map  $p_{1\bar 0}: B \to C_m(X)$.  Fix a point 
$(a_0  +\nu \cdot \bar a+  y_1^0+ \nu \cdot (y_1^1,\ldots, y_1^m))_{\nu \in \2^m}$ in the image. 
Consider the system
\[\begin{aligned}
&( y^0_1+\omega \cdot (y_2^0, \ldots, y^0_m)+ \nu \cdot (y^1_1 +\omega \cdot (\bar y^1_2, \ldots, y^1_m),\ldots,y^m_1+ \omega \cdot (y^m_2, \ldots, y^m_m)))_{\bo \in \2^{m-1} \setminus \bar 0,\  \nu \in \2^m}\\
&( y^0_1+\omega \cdot (z_2^0, \ldots, z^0_m)+ \nu \cdot (y^1_1 +\omega \cdot (\bar z^1_2, \ldots, z^1_m),\ldots,y^m_1+ \omega \cdot  (z^m_2, \ldots, z^m_m)))_{\bo \in \2^{m-1} \setminus \bar 0,\  \nu \in \2^m}
\end{aligned} \]
By Lemma \ref{admisible} system is of complexity $\le m$. It follows by Lemma \ref{counting} that $O(\delta^{-O(1)}\sqrt \eta)$  a.e. $ y_1^0+ \nu \cdot (y_1^1,\ldots, y_1^m)$ we have 
$p^{-1}_{1\bar 0}((a_0  +\nu \cdot \bar a+  y_1^0+ \nu \cdot (y_1^1,\ldots, y_1^m))_{\nu \in \2^m}) \bigcap B$ is of size $(\delta^{C}+O(\eta^{1/4}))|V|^{m^2}$. 
 
Since $f_m$ vanishes $\epsilon$ -a.e. on $C_m(X)$ we get that for  $(O(\epsilon)+O(\delta^{-O(1)}\eta^{\Omega(1)})$ a.e $(\bar y^0,\ldots, \bar y^m) \in B$ we have 
$f_m(p_{1\bar 0}((\bar y^0,\ldots, \bar y^m))=0$. Similarly for other $\bo$,  so that $
( O(\epsilon)+O(\delta^{-O(1)}\eta^{\Omega(1)})$ a.e.
 $ (\bar y^0,\ldots, \bar y^m) \in B$ we have $f_m(\pi_{\bo}(\bar y^0,\ldots, \bar y^m))=0$. \\

  We obtain that for all $\nu\in \2^m, \bo \in \2^m \setminus \bar 0$ we have 
\[
f_m(p_{\bo}(\bar y^0,\ldots, \bar y^m))=0; \qquad h(a+\nu \cdot a)=f_m'(\pi_{\nu}(\bar y^0,\ldots, \bar y^m))=0
\]
for $(O(\epsilon)+O(\delta^{-O(1)}\eta^{\Omega(1)}))$-a.e.  $(\bar y^0,\ldots, \bar y^m) \in B$. Thus there is a constant $R, \alpha>0$ so that for $\epsilon<\alpha$,   $\eta<\delta^R$ we have that  this set is not empty and thus $h_m(a|\bar a) =0$.
  \end{proof}

Finally we claim that there is a choice of $\alpha, B>0$ so that for $\ep < \alpha$  and $\eta<(\epsilon\delta)^{B}$we have   $h(a)=f(a)$ for $O(\epsilon)$ a.e.   $a \in X$ :
 On the one hand we can choose such $\alpha, B,C$ so that we have $h(a)=F_a(\bar v)$ for $C\epsilon$ a.e. $\bar v \in Y_a$.   Choose $\epsilon$ so that also $C\epsilon < 1/4$. 
 
 On the other hand,
let $$Y=\{(a,\bar v): (a|\bar v) \in C_m(X)\}$$
Then $Y = \bigcup_{a \in X} (a,Y_a)$. By Lemma \ref{size} $|Y_a|=(\delta^{2^{m}-1}+O(\eta))|V|^m$  for all $a$. By Lemma \ref{fubini}  since  $f_m \equiv 0$ for  
$\epsilon$-a.e. $(a|\bar v) \in C_m(X)$ then for  $8\epsilon$ a.e. $a$, for $(1+O(\delta^{-O(1)}\eta))^2/8$ a.e. $\bar v \in Y_a$, we have $f(a)=F_a(\bar v)$. Choose $\eta$ so that  so that $(1+O(\delta^{-O(1)}\eta))^2/8 < 1/4$. Then for $O(\epsilon)$ a.e. $a \in X$ we can find $\bar v \in Y_a$ such that 
such that $h(a)=F_a(\bar v)$ and $f(a)=F_a(\bar v)$, so that $f(a)=h(a)$.

\section{Splining result implies extension result}\label{follows}

In this section we show how to deduce Theorem \ref{extending-uniform} from Theorem \ref{testing-uniform}.\\

Let $X \subset V$ be of density $\delta>0$. Then by Theorem 
\ref{testing-uniform} there exist $\alpha, B ,C$ such that if  $0<\epsilon <\alpha$ and $\eta < (\epsilon\delta)^{C}$ and  $X$ is $(\eta,m)$-uniform, and $f_m|_{X} \equiv 0$, then we can find $h:V \to H$ such that $h_m\equiv 0$, and  $h (x)=f(x)$ on $C \epsilon$ a.e. $x \in X$. We claim
that for  $\epsilon$ sufficiently small we will have $h|_X \equiv f$. \\

Consider the function $g:X \to H$ defined by $g=f-h|_X$.  Then $g$ vanishes $C \epsilon$-a.e, $x \in X$  and $g_m$ vanishes on $C_m(X)$.  Let $Z$ be the set of  $z$ with $g(z)=0$. \\

For any $x\in X$ recall that
$$Y_x=\{\bar v: (x|\bar v) \in C_m(X)\}. $$

By Lemma \ref{size}, if $X \subset V$ is of density $\delta$, and $(\eta, m)$-uniform then
$$|Y_x|= (\delta^{2^{m}-1}+O(\eta))|V|^{m}.$$  
Namely all $x \in V$ participate in many $m$-cubes in $X$. 

Denote by $R_x$ the corresponding set of cubes completions:
\[
R_x=\{(x| \bar v)': \bar v \in Y_x\}.
\] 
Then $|R_x|= (\delta^{2^{m}-1}+O(\eta))|V|^{m}$.
We count how many $(x| \bar v)' \in R_x$ do not have a point in $Z$:  we can estimate the  number of cubes in $R_x$ with $x+v_1$ in the bad set (squared):
\[\begin{aligned}
&|\mE_{\bar v } 1_{Z^c \cap X}(x+v_1)\prod_{\bo \in \2^m\setminus \{\bar 0\}} 1_X(x+\omega \cdot \bar v)|^2  \le ( \mE_{v_1  } 1_{Z^c \cap X} (x+v_1))\mE_{v_1} |\mE_{v_2, \ldots, v_m }\prod_{\bo \in \2^m\setminus \{\bar 0\}} 1_X(x+\omega \cdot \bar v)|^2\end{aligned}\] 
which is bounded by Lemma \ref{counting} by $ C\epsilon (\delta+O(\eta)) (\delta^{2^{m+1}-2}+O(\eta))$, so that for $\eta <  O(\delta^{2^{m+1}-2})$ the  number of cubes in $R_x$ with $x+v_1$ in the bad set is $O(  \epsilon)|R_x|$, and similarly for other vertices in $(x|\bar v)'$).  Thus  for  
$\ep$ sufficiently small the set of  cubes completions of $x$ is with all vertices in $Z$ is not empty, i.e 
 for any $x \in X$ we can find $\bar v$, so that $(x| \bar v)$ is an $m$-cube and $x+\omega\cdot \bar v \in Z$ for $\omega \ne \bar 0$. Since  $g_m$ vanishes on $C_m(X)$,
 we get $g(x)=0$. Thus $h$ is an extension of $f|_X$. \\


\section{Polynomial splining on subvarieties}\label{testing}

In this section $V$ is a vectors space over a finite field $k=\mF_q$. and $X$ a subvariety of degree $\le d$ which is a complete intersection and codimension $L$.  We prove a  general splining statement about functions from $X \to H$, $H$ some abelian group.  This proposition is of independent interest, and it does not require $X$ to be of high rank. In this section $O(1)$ is a constant depending on $d, L$ and the dimension of the cubes $m$. We will suppress this dependence.

In this section we prove Theorem \ref{testing-X-intro}. Theorem \ref{testing-subspace-X-intro} can be derived in a similar manner adapting to proof of Theorem \ref{testing-X-intro} below to the arguments in \cite{KR}.

The key ingredient is the following proposition which we prove in the appendix.
 \begin{proposition}\label{solutions-X}For any triple $d,m,L\geq 1$ there exists $n=n(d,m,L)$  such that for any $k$-vector space $V$, homogeneous polynomials    $\{P_i:V\to k\}_{i=1}^L$ of degrees  $\le d$ and points 
$a_j\in X,1\leq j\leq m, X:=\{x:  P_i(x)=0 , i=1, \ldots, L\}$
 we have $|Y|\geq q^{-n}|V|$
where 
$$Y=\{x\in V: P_i(x+a_j)=0,1\leq i\leq L,1\leq j\leq m\}.$$ 
 \end{proposition}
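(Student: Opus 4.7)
The plan is to reduce the bound to the density estimate for complete intersections established in the appendix.

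First, observe that $0 \in Y$: since each $P_i$ is homogeneous of positive degree and $a_j \in X$, we have $P_i(0 + a_j) = P_i(a_j) = 0$ for all $i, j$, so $Y$ is nonempty. Moreover, $Y$ is cut out in $V$ by the $mL$ polynomial equations $\{P_i(x + a_j) = 0\}_{1 \leq i \leq L,\, 1 \leq j \leq m}$, each of degree at most $d$ in $x$, so the codimension of $Y$ in $V$ is at most $mL$.

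When the $mL$ polynomials $\{P_i(x + a_j)\}$ form a regular sequence in $k[V]$, $Y$ is a complete intersection of codimension $mL$ with defining polynomials of degree $\leq d$, and the appendix's density bound for complete intersections gives $|Y| \geq q^{-n(d, mL)} |V|$; in this case we set $n(d, m, L) := n(d, mL)$. In the general case, extract a maximal regular sub-sequence $R_1, \ldots, R_r \subseteq \{P_i(x + a_j)\}$ of length $r \leq mL$. Then $Y^{\ast} := V(R_1, \ldots, R_r)$ is a complete intersection of codimension $r$ containing $Y$, and the appendix bound yields $|Y^{\ast}| \geq q^{-n(d, r)} |V|$. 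One then argues, using that every $P_i(x + a_j)$ vanishes at $0$, that the irreducible components of $Y^{\ast}$ passing through $0$---which account for a bounded fraction of $|Y^{\ast}|$ by the unmixedness of complete intersections in $k[V]$---are entirely contained in $Y$, so that $|Y|$ differs from $|Y^{\ast}|$ by at most a factor depending only on $d, m, L$.

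The main obstacle is precisely this last comparison: controlling the set-theoretic difference $Y^{\ast} \setminus Y$ when the defining polynomials fail to form a regular sequence. A cleaner alternative route is to induct on the degree $d$, using the polarization identity
\[
P_i(x + a_j) = P_i(x) + Q_{ij}(x), \qquad \deg_x Q_{ij} \leq d - 1,
\]
which follows from homogeneity together with $P_i(a_j) = 0$. This splits the defining system of $Y$ into $L$ top-degree equations $P_i(x) + Q_{i1}(x) = 0$ and $L(m-1)$ lower-degree equations $Q_{ij}(x) - Q_{i1}(x) = 0$ for $j \geq 2$. The base case $d = 1$ is immediate: the $P_i$ are linear and $P_i(x + a_j) = P_i(x)$ for $a_j \in X$, so $Y = X$ has density $q^{-L}$. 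In the inductive step, the induction hypothesis controls the density of the solution set of the lower-degree sub-system, and the complete-intersection density bound from the appendix is then applied within that solution set to handle the remaining $L$ top-degree equations, yielding the desired $n(d, m, L)$.
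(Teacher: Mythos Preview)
Your proposal has a genuine gap in both routes, and it misses the key idea that makes the paper's argument work.

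\textbf{First route.} The appendix's density bound (Corollary~\ref{large}) is only stated and proved for \emph{homogeneous} systems: it rests on the Ax--Chevalley--Warning fact that a homogeneous system of total degree $D$ meets every subspace of dimension $>D$. The polynomials $P_i(x+a_j)$ are \emph{not} homogeneous in $x$ once $a_j\neq 0$, so you cannot invoke that bound directly, regular sequence or not. Your subsequent ``unmixedness'' comparison of $Y^\ast$ and $Y$ is also not an argument: nothing in the appendix gives uniform control, over $\mF_q$, on the point-count of the components of an inhomogeneous complete intersection that miss a given rational point.

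\textbf{Inductive route.} The inductive hypothesis is about systems of the specific shape ``homogeneous $P_i$, evaluated at $x+a_j$ with $a_j\in\{P_i=0\}$''. The lower-degree equations $Q_{ij}(x)-Q_{i1}(x)=0$ that you produce are neither homogeneous nor of this shape, so the hypothesis does not apply to them. Likewise, within the zero locus of these lower-degree equations there is no reason the remaining $L$ equations $P_i(x)+Q_{i1}(x)=0$ form anything to which the appendix bound applies.

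\textbf{What the paper actually does.} The missing idea is to pass to a \emph{smaller} set that is cut out by a homogeneous system. Expand
\[
P_i(y+t a_j)=\sum_{\ell=0}^{d_i} t^{\ell}\,P_{i,\ell,j}(y),
\]
where $P_{i,\ell,j}$ is homogeneous of degree $d_i-\ell$ in $y$; the top coefficient $P_{i,d_i,j}=P_i(a_j)$ vanishes because $a_j\in X$. The set
\[
Y'=\{y: P_{i,\ell,j}(y)=0\ \text{for all } i,j,\ 0\le \ell<d_i\}=\{y: y+ta_j\in X\ \text{for all }t\}
\]
is therefore defined by a homogeneous system of total degree $\sum_{i,j}\tfrac{d_i(d_i+1)}{2}\le \tfrac{mLd(d+1)}{2}$, to which Corollary~\ref{large} applies directly, giving $|Y'|\ge q^{-n(d,m,L)}|V|$. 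Since $Y'\subset Y$ (take $t=1$), the proposition follows. In short: instead of trying to bound an inhomogeneous system, strengthen the condition to ``line through each $a_j$'' so that homogeneity is restored.
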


 \begin{remark} $n=n(d,m,L)$ depends linearly on $L, m$.
 \end{remark}

 \begin{corollary}\label{solutions-X1}For any triple $d,m,L\geq 1$ there exists $n=n(d,m,L)$ such that for any $k$-vector space $V$,  polynomials    $\{P_i:V\to k\}_{i=1}^L$ of degrees  $\le d$ and points 
$a_j\in X,1\leq j\leq m, X:=\{x: P_i(x)=0\}$
 we have $|Y|\geq q^{-n}|V|$
where 
$$Y=\{x\in V: P_i(x+a_j)=0,1\leq i\leq L,1\leq j\leq m\}.$$ 
 \end{corollary}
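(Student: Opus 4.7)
My plan is to reduce the corollary to the homogeneous setting of Proposition \ref{solutions-X} via the standard homogenization trick. Let $\tilde V := V \oplus k$, with auxiliary coordinate $t$ on the second factor, and for each polynomial $P_i$ of degree $d_i \le d$ define its homogenization
\[
\tilde P_i(x, t) := t^{d_i} P_i(x/t) = \sum_{k=0}^{d_i} t^{d_i - k} P_i^{(k)}(x),
\]
where $P_i^{(k)}$ denotes the degree-$k$ homogeneous component of $P_i$. Each $\tilde P_i$ is homogeneous of degree $d_i \le d$ on $\tilde V$, and satisfies $\tilde P_i(x, 1) = P_i(x)$.

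Setting $\tilde a_j := (a_j, 1) \in \tilde V$, we have $\tilde P_i(\tilde a_j) = P_i(a_j) = 0$, so $\tilde a_j \in \tilde X := \{\tilde P_i = 0\}$. Applying Proposition \ref{solutions-X} to the homogeneous polynomials $\tilde P_i$ on $\tilde V$ with points $\tilde a_j$ yields
\[
|\tilde Y| \ge q^{-\tilde n}|\tilde V|, \qquad \tilde Y := \{(x, t) \in \tilde V : \tilde P_i(x + a_j, t + 1) = 0 \ \forall i, j\},
\]
where $\tilde n = n(d, m, L)$ is the constant from Proposition \ref{solutions-X}. The slice of $\tilde Y$ at $t = 0$ equals $Y \times \{0\}$, since $\tilde P_i(x + a_j, 1) = P_i(x + a_j)$, and this slice is non-empty: $(0, 0) \in \tilde Y$ because $P_i(a_j) = 0$ for all $i, j$.

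The main obstacle is that the bound $|\tilde Y| \ge q^{-\tilde n}|\tilde V|$ does not immediately yield a lower bound on the particular slice at $t = 0$ --- a priori the mass of $\tilde Y$ could concentrate on slices away from $t = 0$ (for instance, the slice at $t = -1$ is cut out by the top-degree homogeneous parts $P_i^{(d_i)}(x + a_j)$, which could be large). To circumvent this, my plan is to use that $Y$ itself is a non-empty subvariety of $V$ cut out by the $Lm$ polynomials $\{P_i(x + a_j)\}_{i,j}$, each of degree $\le d$, and to invoke the density lower bound for non-empty subvarieties of $V$ defined by a bounded number of polynomials of bounded degree --- a statement established in the appendix as part of the machinery leading to Proposition \ref{solutions-X} (and which, in effect, is what Proposition \ref{solutions-X} produces in the case $m=1$ applied to the shifted polynomials $P_i(\cdot + a_j)$ with distinguished point $0$). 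This directly gives $|Y| \ge q^{-n(d, m, L)}|V|$ and completes the derivation.
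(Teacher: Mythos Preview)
Your homogenization route is correctly identified as a dead end: the bound on $|\tilde Y|$ gives no control on the slice $t=0$, and you rightly abandon it. The problem is what you fall back on. You appeal to a ``density lower bound for non-empty subvarieties defined by a bounded number of polynomials of bounded degree,'' and justify it by saying it is ``what Proposition~\ref{solutions-X} produces in the case $m=1$ applied to the shifted polynomials $P_i(\cdot + a_j)$ with distinguished point $0$.'' But Proposition~\ref{solutions-X} is stated and proved only for \emph{homogeneous} polynomials, and the shifted polynomials $Q_{i,j}(x):=P_i(x+a_j)$ are not homogeneous. Applying the homogeneous proposition to them is exactly the non-homogeneous statement you are trying to prove, so the argument is circular as written.

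The missing step is short but essential, and it is precisely the content of Lemma~\ref{b} (which is the paper's one-line proof). Since $a_j\in X$, each $Q_{i,j}$ satisfies $Q_{i,j}(0)=P_i(a_j)=0$, so $Q_{i,j}$ has no constant term and decomposes as $Q_{i,j}=\sum_{l=1}^{d_i} Q_{i,j}^l$ with $Q_{i,j}^l$ homogeneous of degree $l$. The common zero locus of the homogeneous family $\{Q_{i,j}^l\}$ is contained in $Y$, and now Corollary~\ref{large} (the Chevalley--Warning/Ax bound) applies directly to this homogeneous family, giving $|Y|\ge q^{-n}|V|$ with $n$ depending only on $d,m,L$. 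Equivalently, apply Lemma~\ref{b} to the family $\{Q_{i,j}\}$ at the point $0\in Y$ to get many lines through $0$ inside $Y$, hence the lower bound on $|Y|$. Your proposal never uses the fact that $0\in Y$ to kill the constant terms, and without that reduction the appeal to the homogeneous appendix machinery has no force.
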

 
\begin{proof} By Lemma \ref{b}. 
\end{proof}

\begin{proof}[Proof proposition \ref{testing-X-intro}]
Let $X$ be a variety of degree $d$ and codimension $L$ that is a complete intersection. Let $f:X \to H$ s.t. $f_m$ vanishes 
$\epsilon$-a.e $c \in C_m(X)$. 

For $a \in X$ define
\[
Y_a=\{\bar v: (x|\bar v)' \in C_m'(X)\}
\]
By Corollary \ref{solutions-X1}  for any $a \in X$ we have $|Y_a| =q^{-O(1)}|V|^m$ \footnote{For sufficiently high rank $Y_a$ is approximately of size $q^{-(2^{m}-1)L}|V|^m$, but in this section we don't have any rank assumptions.}.
Define 
\[
F_a(\bar v)= f_m(a|\bar v)-f(a)
\]
\begin{lemma}\label{variety-constant} $F_a(\bar v)$ is constant $q^{O(1)}\epsilon$ a.e. $v \in Y_a$.
\end{lemma}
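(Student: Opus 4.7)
The plan is to mimic the argument of Proposition \ref{constant}, replacing the uniformity-based counting of Lemma \ref{counting} by the algebraic abundance provided by Corollary \ref{solutions-X1}. All constants of the form $q^{O(1)}$ below depend only on $d,L,m$.

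First, the same telescoping identity used in the proof of Proposition \ref{constant} gives that for any $\bar u,\bar v\in Y_a$ and any $\bar w\in V^m$,
\[
F_a(\bar v) - F_a(\bar u) = \sum_{i=1}^{m} f_m(a+u_i|w_1,\ldots,w_{i-1},w_i-u_i,u_{i+1},\ldots,u_m) - \sum_{i=1}^{m} f_m(a+v_i|w_1,\ldots,w_{i-1},w_i-v_i,v_{i+1},\ldots,v_m).
\]
Hence, if $\bar w$ is such that all $2m$ cubes on the right-hand side lie in $C_m(X)$ and $f_m$ vanishes on each, then $F_a(\bar v)=F_a(\bar u)$. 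Let $A\subset V^{3m}$ denote the set of triples $(\bar u,\bar v,\bar w)$ with $\bar u,\bar v\in Y_a$ and all $2m$ associated cubes in $C_m(X)$, and write $A_{\bar u,\bar v}:=\{\bar w:(\bar u,\bar v,\bar w)\in A\}$.

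The heart of the argument is the following pair of size estimates, proved by iterating Corollary \ref{solutions-X1} coordinate by coordinate: (i) $|A|\geq q^{-O(1)}|V|^{3m}$, and (ii) for every $(\bar u,\bar v)\in Y_a\times Y_a$ one has $|A_{\bar u,\bar v}|\geq q^{-O(1)}|V|^m$. Combined with the trivial fiber bound $\leq|V|^{2m-1}$ for each of the $2m$ projections $p^u_i,p^v_i:A\to C_m(X)$, and with the fact that the set $B$ of cubes bad for $f$ has $|B|\leq\epsilon|C_m(X)|\leq\epsilon|V|^{m+1}$, the union bound yields
\[
\sum_{i=1}^{m}\bigl(|(p^u_i)^{-1}(B)|+|(p^v_i)^{-1}(B)|\bigr)\leq 2m\cdot\epsilon\,|V|^{3m}\leq q^{O(1)}\epsilon\cdot|A|.
\]
Hence the subset $A^{\sharp}\subset A$ on which all $2m$ cubes are good for $f$ has relative density $\geq 1-q^{O(1)}\epsilon$ in $A$. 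Combining (ii) with Lemma \ref{fubini} applied to the projection $A\to Y_a\times Y_a$, for $q^{O(1)}\epsilon$-a.e.\ pair $(\bar u,\bar v)$ some $\bar w\in A_{\bar u,\bar v}$ lies in $A^{\sharp}$, so $F_a(\bar v)=F_a(\bar u)$ on such pairs. A standard Markov argument then picks out $\bar u_0\in Y_a$ such that $F_a(\bar v)=F_a(\bar u_0)$ for $q^{O(1)}\epsilon$-a.e.\ $\bar v\in Y_a$, which is the claim.

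The principal technical obstacle will be the inductive verification of (i) and (ii). For (i) I would order the $3m$ coordinates and, at each step, apply Corollary \ref{solutions-X1} to place the next coordinate so that every newly created linear combination of already-fixed coordinates lands in $X$; the number of such combinations at each step is bounded by $2^m(2+2m)$, a quantity depending only on $m$, so each step costs only a $q^{-O(1)}$ factor. Estimate (ii) is obtained by the same procedure restricted to the $\bar w$ variables, using that the vertices of the cubes $(a|\bar u)$ and $(a|\bar v)$ already lie in $X$ by the assumption $(\bar u,\bar v)\in Y_a\times Y_a$, so the required ``prior'' shifts needed to invoke Corollary \ref{solutions-X1} are available from the start. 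No rank hypothesis is used; the only losses $q^{O(1)}$ are those already present in Proposition \ref{solutions-X}.
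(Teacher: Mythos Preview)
Your proposal is correct and follows essentially the same route as the paper's own proof: the identical telescoping identity for $F_a(\bar v)-F_a(\bar u)$, the same auxiliary set $A$ (called $S$ in the paper), the same per-pair fibers $A_{\bar u,\bar v}$ with the lower bound $q^{-O(1)}|V|^m$ from Corollary~\ref{solutions-X1}, and the same Fubini/Markov finish.

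The one place where you diverge slightly is in handling the projections $p^u_i,p^v_i$ to $C_m(X)$. The paper goes to the trouble of showing, via Corollary~\ref{solutions-X1}, that each nonempty fiber of $\pi_i|_S$ has size between $q^{-O(1)}|V|^{2m-1}$ and $|V|^{2m-1}$, and then quotes Lemma~\ref{ae-projection1}. You instead use only the trivial upper bound $|V|^{2m-1}$ on fibers together with $|A|\ge q^{-O(1)}|V|^{3m}$ and $|C_m(X)|\le|V|^{m+1}$. Your route is cleaner and avoids the fiber lower-bound computation entirely; it is exactly the shortcut flagged in the Remark after Lemma~\ref{A} in Section~4. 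Both arguments yield the same $q^{O(1)}\epsilon$ loss.
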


\begin{proof}
As in Proposition \ref{constant} we have 
\[\begin{aligned}
F_a(\bar v )-F_a(\bar u )&= 
 \sum_{i=1}^m  f_m(a+u_i|w_1,\ldots, w_{i-1}, w_i-u_i,u_{i+1} \ldots, u_m)\\
& \quad - \sum_{i=1}^m f_m (a+v_i|w_1,\ldots, w_{i-1}, w_i-v_i,v_{i+1} \ldots, v_m).
\end{aligned}\]

Let $S$ be the set of $(\bar v, \bar u, \bar w) \in Y_a^2 \times V^m$ such that for all $i=1, \ldots, m$ we have
\[
 (a+u_i|w_1,\ldots, w_{i-1}, w_i-u_i,u_{i+1} \ldots, u_m),  (a+v_i|w_1,\ldots, w_{i-1}, w_i-v_i,v_{i+1} \ldots, v_m) \in C_m(X).
\]
By Corollary \ref{solutions-X1} $|S|=q^{-O(1)}|V|^{3m}$.

Consider the maps: $\pi_i, p_i: S  \to V^{2^m}$ defined by
\[\begin{aligned}
&\pi_i:(\bar v, \bar u ,\bar w )\mapsto  (a+u_i|w_1,\ldots, w_{i-1}, w_i-u_i,u_{i+1} \ldots, u_m),  \\
&p_i:(\bar v, \bar u ,\bar w )\mapsto  (a+v_i|w_1,\ldots, w_{i-1}, w_i-v_i,v_{i+1} \ldots, v_m).
\end{aligned}
\]

Fix  $(s|\bar t) \in C_m(X)$ for which the preimage under $\pi_i$ of $(s|t)$ is not empty,  This oreimage is the set of  $(\bar v, \bar u ,\bar w )$ such that 
\[\begin{aligned}
&(a|\bar v)', (a|\bar u)'  \in C_m'(X), \\
&  a+u_i=s, w_1=t_1,\ldots, w_{i-1}=t_{i-1}, w_i-u_i=t_i,u_{i+1}=t_{i+1} \ldots, u_m=t_m
\end{aligned}\]
which is the same as
\[\begin{aligned}
&(a+\nu \cdot \bar v)_{\nu \in \2^m} \in X, (a+\nu \cdot (u_1, \ldots, u_{i-1}, s-a,t_{i+1}, \ldots, t_m ))_{\nu \in \2^m} \in X, \\
&w_1=t_1,\ldots, w_{i-1}=t_{i-1}, w_i-u_i=t_i.
\end{aligned}
\]
Observe that the condition $(a+\nu \cdot (u_1, \ldots, u_{i-1}, s-a,t_{i+1}, \ldots, t_m ))_{\nu \in \2^m} \in X$ can be written as
\[
(a+\nu \cdot (u_1, \ldots, u_{i-1}, t_{i+1}, \ldots, t_m ) \in X; \  
(s+\nu \cdot (u_1, \ldots, u_{i-1}, t_{i+1}, \ldots, t_m );  \quad \nu \in \2^{m-1} 
\]
Since the preimage is not empty,  we have in particular  that $a+\nu'  \cdot( t_{i+1}, \ldots, t_m ) \in X$ for any $\nu' \in \2^{m-i}$. 
By Corollary \ref{solutions-X1} this system has at least  $q^{-O(1)}|V|^{2m-1}$ many solutions and clearly it has at most $|V|^{2m-1}$ solutions. By Lemma \ref{ae-projection1}, since $f_m$ vanishes 
$\epsilon$-a.e. on $C_m(X)$ we find that $f_m(\pi_i(\bar v, \bar u ,\bar w))$ vanishes for   $q^{O(1)}\epsilon$-a.e $(\bar v, \bar u, \bar w) \in S$

 Similarly for the maps $p_i$ we have $f_m(p_i(\bar v, \bar u ,\bar w))$ vanishes for $q^{O(1)}\epsilon$-a.e $(\bar v, \bar u, \bar w) \in S$.  It follows that for $q^{O(1)}\epsilon$-a.e $(\bar v, \bar u, \bar w) \in S$ we have 
 $f_m(\pi_i(\bar v, \bar u, \bar w) )=f_m(p_i(\bar v, \bar u, \bar w)) =0$ for $i=1, \ldots, m$.  \\

 For $\bar v, \bar u \in Y_a$ let $S_{\bar u, \bar v}$ be the set of $\bar w \in V^m$ such that $(\bar v, \bar u, \bar w)  \in S$. By Corollary \ref{solutions-X1} $|S_{\bar u, \bar v}|$ is of size $q^{-O(1)}|V|^m$, so is not empty for $|V|$ sufficiently large.  Now $S=\bigcup _{\bar u, \bar v} (\bar u, \bar v, S_{\bar u ,\bar v})$. It follows that for $q^{O(1)}\epsilon$ a.e.
$\bar v, \bar u \in Y_a$ we can find $\bar w$ such that   $f_m(\pi_i(\bar v, \bar u, \bar w) )=f_m(p_i(\bar v, \bar u, \bar w)) =0$ for $i=1, \ldots, m$, and thus $F_a(\bar v)-F_a(\bar w) =0$. 
\end{proof}

We define $h: X \to H$ setting $h(x)$ to be the common value of $F_a(\bar v)$, as $\bar v$ range over  $Y_a$.

\begin{lemma} For $\epsilon$ sufficiently small in terms of $d, L, m$, we have $h_m|_{X} \equiv 0$.
\end{lemma}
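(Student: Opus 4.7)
Fix a cube $(a_0|\bar a) \in C_m(X)$; I will show $h_m(a_0|\bar a) = 0$. The plan is to introduce auxiliary parameters $\bar y^0, \bar y^1, \ldots, \bar y^m \in V^m$ and, for each $\nu \in \2^m$, to form the tuple $\bar v_\nu := \bar y^0 + \sum_{i=1}^m \nu_i \bar y^i$ (coordinate-wise in $V^m$), so that $\omega \cdot \bar v_\nu = \omega\cdot\bar y^0 + \sum_i \nu_i(\omega\cdot\bar y^i)$. A direct expansion then yields the key algebraic identity
\[
\sum_{\nu\in\2^m}(-1)^{|\nu|} F_{a_0+\nu\cdot\bar a}(\bar v_\nu) \;=\; \sum_{\omega\in\2^m\setminus\bar 0}(-1)^{|\omega|}\, f_m\!\bigl(a_0+\omega\cdot\bar y^0\,\bigm|\,a_1+\omega\cdot\bar y^1,\ldots,a_m+\omega\cdot\bar y^m\bigr),
\]
since both sides equal $\sum_{\omega\ne\bar 0}\sum_\nu(-1)^{|\omega|+|\nu|} f\bigl((a_0+\omega\cdot\bar y^0)+\nu\cdot(a_1+\omega\cdot\bar y^1,\ldots,a_m+\omega\cdot\bar y^m)\bigr)$. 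Hence it will suffice to produce a single tuple $(\bar y^0,\ldots,\bar y^m)$ simultaneously satisfying: (A) $\bar v_\nu \in Y_{a_0+\nu\cdot\bar a}$ and $F_{a_0+\nu\cdot\bar a}(\bar v_\nu)=h(a_0+\nu\cdot\bar a)$ for every $\nu\in\2^m$, and (B) $f_m(a_0+\omega\cdot\bar y^0\,|\,a_1+\omega\cdot\bar y^1,\ldots,a_m+\omega\cdot\bar y^m)=0$ for every $\omega\in\2^m\setminus\bar 0$; the identity then collapses to $\sum_\nu(-1)^{|\nu|}h(a_0+\nu\cdot\bar a)=h_m(a_0|\bar a)=0$.

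To set up the ambient parameter set, define $B\subset V^{m(m+1)}$ to consist of all tuples $(\bar y^0,\ldots,\bar y^m)$ for which each of the $2^m(2^m-1)$ points $a_0+\nu\cdot\bar a + \omega\cdot\bar y^0 + \sum_i\nu_i(\omega\cdot\bar y^i)$, $\omega\ne\bar 0$, $\nu\in\2^m$, lies in $X$. Since each vertex $a_0+\nu\cdot\bar a$ belongs to $X$ (as $(a_0|\bar a)\in C_m(X)$), applying Corollary \ref{solutions-X1} with these shifts of $X$ gives $|B|\ge q^{-O(1)}|V|^{m(m+1)}$. By construction every tuple in $B$ automatically has $\bar v_\nu \in Y_{a_0+\nu\cdot\bar a}$ and places the cubes from (B) inside $C_m(X)$, so only the equality part of (A) and the vanishing part of (B) remain to be arranged.

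It remains to show that, apart from a small subset of $B$, both (A) and (B) hold. I will use fiber counting. For fixed $\nu$, the projection $\pi_\nu\colon B\to Y_{a_0+\nu\cdot\bar a}$, $(\bar y^0,\ldots,\bar y^m)\mapsto\bar v_\nu$, has non-empty fibers given by residual systems on $V^{m^2}$ whose solution counts Corollary \ref{solutions-X1} sandwiches in $[q^{-O(1)}|V|^{m^2},\,|V|^{m^2}]$; combined with Lemma \ref{variety-constant}, which bounds the exceptional set $E_\nu\subset Y_{a_0+\nu\cdot\bar a}$ where $F_{a_0+\nu\cdot\bar a}\ne h(a_0+\nu\cdot\bar a)$ by $q^{O(1)}\epsilon|Y_{a_0+\nu\cdot\bar a}|$, this yields $|\pi_\nu^{-1}(E_\nu)|\le q^{O(1)}\epsilon|B|$. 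Analogously, for $\omega\ne\bar 0$ the projection $p_\omega\colon B\to C_m(X)$ onto the $\omega$-scaled cube has non-empty fibers in $[q^{-O(1)}|V|^{m^2-1},\,|V|^{m^2-1}]$, and the hypothesis that $f_m$ vanishes $\epsilon$-a.e.\ on $C_m(X)$ similarly bounds the portion of $B$ where (B) fails at $\omega$ by $q^{O(1)}\epsilon|B|$.

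A union bound over the $2^m$ choices of $\nu$ in (A) and the $2^m-1$ choices of $\omega$ in (B) bounds the total bad subset of $B$ by $q^{O(1)}\epsilon|B|$. Thus for $\epsilon<q^{-A}$ with $A=A(d,L,m)$ sufficiently large the bad set is strictly smaller than $B$, so some $(\bar y^0,\ldots,\bar y^m)\in B$ satisfies both (A) and (B), proving $h_m(a_0|\bar a)=0$. The main technical point, as in Proposition \ref{hmzero}, is the fiber-counting step that transfers the almost-everywhere properties of $F_a$ and $f_m$ up into $B$; here, since no uniformity of $X$ is assumed, the Gowers-norm counting of Lemma \ref{counting} is replaced by the polynomial-equation count of Proposition \ref{solutions-X}/Corollary \ref{solutions-X1}, and this is where the $q^{O(1)}$ losses come from.
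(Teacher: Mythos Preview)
Your proof is correct and follows the same strategy as the paper: set up the parameter set $B$, verify $|B|\ge q^{-O(1)}|V|^{m(m+1)}$ via Corollary \ref{solutions-X1}, and then pull back the almost-everywhere properties of $F_a$ and $f_m$ through the projections $\pi_\nu$ and $p_\omega$. The only difference is in the fiber-counting step: the paper simply notes that $\pi_\nu,p_\omega$ are affine-linear on all of $V^{m(m+1)}$ and therefore have constant-size fibers there (so the transfer to $B$ costs only the density factor $|V|^{m(m+1)}/|B|=q^{O(1)}$), whereas you bound the non-empty fibers inside $B$ directly by a second application of Corollary \ref{solutions-X1}; both routes yield the same $q^{O(1)}\epsilon$ loss.
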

\begin{proof}  
For any fixed $(a_0,\bar a)\in V^{m+1}$
system of affine  forms in $(\bar y^0,\ldots, \bar y^m)$ 
\[
(a_0  +\nu \cdot \bar a+  \omega \cdot \bar y^0+ \nu \cdot (\omega \cdot \bar y^1,\ldots, \omega \cdot \bar y^m))_{\bo \in \2^m \setminus \bar 0,\  \nu \in \2^m}
 \]
 By Corollary \ref{solutions-X1} the set
 \[
 B= \{(\bar y^0,\ldots, \bar y^m): (a_0  +\nu \cdot \bar a+  \omega \cdot \bar y^0+ \nu \cdot (\omega \cdot \bar y^1,\ldots, \omega \cdot \bar y^m)) \in X, \ \forall \bo \in \2^m \setminus \bar 0,\  \nu \in \2^m \} 
 \]
 is of size  $q^{-O(1)}|V|^{m(m+1)}$.\\

Consider the maps $\pi_{\nu} : V^{m(m+1)} \to V^{2^m-1}$ and $p_{\omega} : V^{m(m+1)} \to V^{2^m}$  defined by
\[\begin{aligned}
&\pi_{\nu}: (\bar y^0,\ldots, \bar y^m) \mapsto
(a_0  +\nu \cdot \bar a+  \omega \cdot \bar y^0+ \nu \cdot (\omega \cdot \bar y^1,\ldots, \omega \cdot \bar y^m))_{ \bo \in \2^m \setminus \bar 0} \\
&p_{\omega}: (\bar y^0,\ldots, \bar y^m) \mapsto 
 (a_0  +\nu \cdot \bar a+  \omega \cdot \bar y^0+ \nu \cdot (\omega \cdot \bar y^1,\ldots, \omega \cdot \bar y^m))_{\nu \in \2^m }
 \end{aligned}\]
$\pi_{\nu}, p_{\omega}$ are linear maps on $V^{m(m+1)}$ thus all fibers are of the same size.

By Proposition \ref{variety-constant} the function $F_a(\bar v )$ is constant  $q^{O(1)}\epsilon$  a.e. $\bar v \in V^m$, so that
 for $q^{O(1)}\epsilon$ a.s.
 $(\bar y^0,\ldots, \bar y^m) \in V^{m(m+1)}$, such that $\pi_{\nu}(\bar y^0,\ldots, \bar y^m) \in C_m(X)'$ we have 
 \[
 h(a+\nu \cdot \bar a)= f'_m( \pi_{\nu}(\bar y^0,\ldots, \bar y^m)).
  \]

Since $f_m$ is zero $\epsilon$-a.e it follows that for $\epsilon$-a.e $(\bar y^0,\ldots, \bar y^m) \in V^{m(m+1)}$,
\[
f_m( p_{\bo}(\bar y^0,\ldots, \bar y^m))=0.
\]

 It follows that  $q^{O(1)}\epsilon$  a.e $(\bar y^0,\ldots, \bar y^m) \in B$ we have for all $\nu, \bo$
 \[
 f_m( p_{\bo}(\bar y^0,\ldots, \bar y^m))=0, \quad  h(a+\nu \cdot \bar a)=f'_m(\pi_{\nu}(\bar y^0,\ldots, \bar y^m) ),
 \]
 so that $h_m(a|\bar a) =0$.
\end{proof}

Finally we need to show that $q^{O(1)}\epsilon$ a.e. $x \in X$ we have $h(x)=f(x)$.  Now $C_m(X)=\bigcup_{x \in X} (x, Y_x)$, and $f_m$ vanishes $\epsilon$ a.e. on $C_m(X)$ thus  by Lemma \ref{fubini}  for  $q^{O(1)}\epsilon$ a.e. $\bar v \in Y_x$  we have $h(x)=F_x(\bar v)$ and $q^{O(1)}\epsilon$ a.e. $x$, for  $\bar v \in Y_x$  we have $f(x)=F_x(\bar v)$.

\end{proof}

\section{Polynomial splining  on subvarieties; high rank case}

In this section we prove Theorem \ref{testing-X-high}. Let $V$ be a vector space over a finite field $k$.
We assume that $k=\mF _q$,  $q=p^l$ and denote by $e_q:k\to \mC^\star$ the additive character 
$e_q (a):=\exp (tr _{k/\mF _p}(a))$.
 Let $\bar P=(P_{ij}: 1 \le j \le d, 1 \le i \le M_j)$ be a collection of polynomials,  where $\{P_{ij}\}_{i=1}^{M_j}$ are of degree exactly $j$, 
$P_{ij}:V \to k$. Let $L=\sum M_j$. We denote by   
$\mathcal B$ the  level sets of the polynomials $\bar P$.  Let $\Sigma = \prod_{j \in [d]} k^{M_j}$.  For any $\bar a \in \Sigma$ denote 
$X_{\bar a}$ the variety $\bar P = \bar a$.  Restricting to a subspace of codimension bounded by $L$, we may  assume all polynomials in $\bar P$ are of degree $\ge 2$.
 All bounds $O(1), \Omega(1)$ in this section depend on $d,L,k$; we suppress this dependence.

Fix $\bar a$ and let $X=X_{\bar a}$ (High rank implies that $X_{\bar a}$ are all of essentially the same size).  

We assume that $\bar a =0$ for simplicity in notation; the proof is the same for other $\bar a$. We follow the proof in section $4$, with two observations: the first is that all sets in the proof in section $4$ have many points as long as $a \in X$ (as in previous section, this is not a high rank property; high rank gives a precise estimate, but this is not necessary for the argument). The  second observation (Lemmas \ref{quadratic}, \ref{general} below) that in the high rank case all maps in the proofs of the various Lemmas in section $4$ have fibers of essentially the same size; in section $4$ we this property was obtained using the uniformity condition.

We consider first the case $d=2$, and let $P$ be of degree $2$.  Denote by $(x,y)$ the bilinear form $P(x+y)-P(x)-P(y)$. 
For $ n \in \mN$  denote $[n]_*^2=\{(i,j):  1 \le i \le j \le n\}$.
The key is the following Lemma: 

\begin{lemma}\label{quadratic}  Let $n, m \in \mN$.  Let $P$ be of degree $d$ and rank $>r$. Let  $J \subset [m]_*^2$,  $K \subset [n]_*^2$, $I\subset [m]_* \times [n]_*$. 
 For $q^{-\Omega_{m,n}(r)}$-almost any $\bar t \in V^{m}$ such that $(t_i, t_j) =0$ for  $(i,j) \in J$,
   the system in $\bar v \in V^n$.
\[
(v_i, v_j) =0; (i,j) \in K;   \quad   (v_i, t_j) =0; (i,j) \in I
\]
has the $(q^{-C}+O(q^{-\Omega_{m,n}(r)}))|V|$ many solutions, where $C$ is the sum of the sizes of the non empty of the set $I$, $J$, $K$.
\end{lemma}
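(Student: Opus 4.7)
The strategy is Fourier inversion against the additive character $e_q$ of $k$, combined with a second-moment argument in $\bar t$. For a fixed $\bar t$, write the number of $\bar v \in V^n$ satisfying the system as
\[
N(\bar t) = \frac{1}{q^{|K|+|I|}}\sum_{\bar\lambda \in k^K,\,\bar\mu \in k^I}\ \sum_{\bar v \in V^n} e_q\!\left(\sum_{(i,j)\in K}\lambda_{ij}(v_i,v_j)+\sum_{(i,j)\in I}\mu_{ij}(v_i,t_j)\right).
\]
The term $(\bar\lambda,\bar\mu)=(0,0)$ contributes the expected main term $q^{-C}|V|^n$ with $C=|K|+|I|$ (the empty-set conventions in the statement correspond to the trivial collapse of the corresponding Fourier factor). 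Write the remainder as $E(\bar t)$. The goal is to show that $|E(\bar t)|\le q^{-\Omega_{m,n}(r)}|V|^n$ for all but a $q^{-\Omega_{m,n}(r)}$ proportion of $\bar t \in T:=\{\bar t:(t_i,t_j)=0,\ (i,j)\in J\}$.

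To achieve this I bound the second moment $\mE_{\bar t \in T}|E(\bar t)|^2$. Squaring the error, applying Fourier inversion once more to $\mathbf{1}_T$ (introducing $\bar\kappa\in k^J$ as dual variables), and doubling $\bar v\to (\bar v,\bar v')$, reduces the second moment, up to the explicit normalizing factor $q^{|J|}/|T|$, to a sum indexed by nonzero tuples $(\bar\lambda,\bar\mu,\bar\lambda',\bar\mu',\bar\kappa)$ of normalized exponential sums
\[
\frac{1}{|V|^{m+2n}}\sum_{\bar t,\bar v,\bar v'}e_q\bigl(Q(\bar t,\bar v,\bar v')\bigr),
\]
where each $Q$ is the nontrivial linear combination of the bilinear forms $(t_i,t_j)$, $(v_i,v_j)$, $(v_i',v_j')$, $(v_i,t_j)$, $(v_i',t_j)$ with the prescribed coefficients. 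By the standard bias lemma, if $Q$ has partition rank $\ge s$ then the normalized sum is at most $q^{-\Omega(s)}$, so provided every nonzero such $Q$ has partition rank $\Omega(r)$, the second moment is $O(q^{-\Omega_{m,n}(r)})$. Markov's inequality then gives $|E(\bar t)|\le q^{-\Omega_{m,n}(r)}|V|^n$ outside a $q^{-\Omega_{m,n}(r)}$-small set of $\bar t\in T$, as required.

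The main obstacle is the \emph{rank-transfer} step: showing that every nonzero linear combination $Q$ of the bilinear forms above has partition rank $\Omega(r)$ as a polynomial in $(\bar t,\bar v,\bar v')$. I would argue by contradiction. Any low-rank factorization $Q=\sum_k A_k B_k$ with $A_k,B_k$ linear, combined with a carefully chosen specialization of a subset of the coordinates designed to isolate a single nonzero coefficient (say $\lambda_{ij}$ on $(v_i,v_j)$, or $\mu_{ij}$ on $(v_i,t_j)$), would exhibit the symmetric bilinear form $(\cdot,\cdot)$ associated to $P$, and hence $P$ itself, as an $O(\mathrm{rank}(Q))$ sum of products of linear forms, contradicting $\mathrm{rank}(P)>r$. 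The combinatorial cost of choosing the right coefficient and specialization (at most polynomial in $m,n$) is the source of the $m,n$-dependence in the implied constant $\Omega_{m,n}$.

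Once Lemma \ref{quadratic} is established in this form for $d=2$, the higher-$d$ case should follow by replacing each bilinear form $(\cdot,\cdot)$ by the appropriate $d$-linear form derived from $P$ (the $d$-th discrete derivative), running the same Fourier expansion over the multilinear level conditions, and invoking the analogous bias lemma and rank-transfer statement for polynomials of degree $d$; the structure of the argument is identical.
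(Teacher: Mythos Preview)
Your proposal is correct and follows the paper's strategy: Fourier-expand the solution count, separate the main term, and bound the error by a second moment in $\bar t$ combined with a rank-based bias estimate, concluding via Markov/Fubini. Two execution differences are worth noting. First, the paper averages the second moment over all $\bar t\in V^m$ rather than over the constraint set $T$, so no Fourier inversion of $\mathbf{1}_T$ (and no auxiliary variables $\bar\kappa\in k^J$) is needed; passing from ``almost every $\bar t$'' to ``almost every $\bar t\in T$'' then costs only the density of $T$. Second, for the bias bound the paper does not run a rank-transfer argument: in the quadratic case it asserts the $q^{-r/2}$ bound directly, and in the general-degree analogue (Lemma~\ref{general}) it iterates Cauchy--Schwarz until only a single multilinear term $(x_1,\dots,x_d)$ survives and then invokes Theorem~\ref{equi}. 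Your specialization route is sound in principle, but the Cauchy--Schwarz alternative is cleaner and sidesteps the case analysis needed when diagonal and off-diagonal bilinear terms coexist after restriction.
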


\begin{proof}
For fixed $\bar t$, the number of solutions is given by
\[
q^{-(|K|+|J|+|I|)}\sum _{\bar a \in  k^I, \bar b \in  k^J, \bar c  \in  k^K} \sum_{\bar v} e_q(\sum_{(i,j) \in I} \bar a_{i,j} (v_i, t_j) +\sum_{(i,j) \in J}\bar b_{i,j}(t_i, t_j) +\sum _{(i,j) \in K}\bar c_{i,j}(v_i, v_j) ).
\]
 The contribution to the sum of  $(\bar a, \bar b, \bar c) = \bar 0$ is $q^{-(|K|+|J||+I|)}$. 
 
For fixed $(\bar a, \bar b, \bar c) \ne \bar 0$ consider the average
\[\begin{aligned}
&\mE_{\bar t}| \mE_{\bar v} e_q(\sum_{(i,j) \in I} \bar a_{i,j} (v_i, t_j) +\sum_{(i,j) \in J}\bar b_{i,j}(t_i, t_j) +\sum _{(i,j) \in K}\bar c_{i,j}(v_i, v_j) )|^2\\
&= \mE_{\bar t} \mE_{\bar v, \bar v'} e_q(\sum_{(i,j) \in I} \bar a_{i,j} (v'_i, t_j) +\sum _{(i,j) \in K}\bar c_{i,j}((v_i, v_j')+(v_i', v_j)+(v'_i, v'_j) )
\end{aligned}\]
Since $P$ is of rank $>r$ the contribution is $q^{-r/2}$.   Now the Lemma follows from Lemma \ref{fubini}. 
\end{proof}

As mentioned above, the proof of Theorem \ref{testing-X-high} follow along the lines of the proof of  Theorem \ref{testing-uniform}, except the uniformity in the sizes of the fibers of the various maps in the proof now follows from the high rank property, instead of uniformity. 

We demonstrate this with a proof of Lemma \ref{A_m} in the case $d=2$ under the condition of  high rank (replacing the condition of uniformity). 
We consider the case $L=1$; the case $L>1$ is similar. 

\begin{lemma}\label{QA_m}
Consider the set
\[
A_m=\{ u_m, v_m, w_1, \ldots, w_m  : (a+u_m|w_1,\ldots, ,w_{m-1}, w_m-u_m), (a+v_m|w_1,\ldots, w_{m-1}, w_m-v_m) \in C_m(X)\}.
\]
Then for any $s>0$ there exists $r=r(s,d,m)$ such that   for $(O(\epsilon)+O(q^{-s}))$ a.e. $(u_m, v_m , \bar w) \in A_m$ we have
$$f_m(a+u_m|w_1,\ldots, w_{m}-u_m) =f_m(a+v_m|w_1,\ldots, w_{m}-v_m) =0.$$ 
\end{lemma}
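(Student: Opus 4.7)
The plan is to imitate the proof of Lemma \ref{A_m}, replacing the uniformity-based fiber counting (Lemma \ref{counting}) by the high-rank fiber counting of Lemma \ref{quadratic}. Consider the map $p_m: V^{m+2} \to C_m(V)$ given by $(u_m, v_m, \bar w) \mapsto c_1 := (a+u_m\,|\,w_1, \ldots, w_{m-1}, w_m - u_m)$; its restriction to $A_m$ lands in $C_m(X)$. For $c_1 = (s|\bar t) \in C_m(X)$ in the image the values $u_m = s-a$, $w_i = t_i$ ($i<m$), $w_m = t_m + s - a$ are forced, and the fiber $p_m^{-1}(c_1) \cap A_m$ is parameterized by those $v_m \in V$ for which $c_2 := (a+v_m\,|\,w_1, \ldots, w_{m-1}, w_m - v_m)$ also lies in $C_m(X)$. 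Since the $\bo_m=1$ vertices of $c_2$ coincide with the $\bo_m=1$ vertices of $c_1$ and therefore already lie in $X$, this collapses to the $2^{m-1}$ quadratic conditions $P(a + v_m + \bo \cdot (t_1,\ldots,t_{m-1})) = 0$ for $\bo \in \2^{m-1}$.

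The central step is to show that for $O(q^{-\Omega_m(r)})$-almost every $c_1 \in C_m(X)$ this fiber has size $(q^{-2^{m-1}} + O(q^{-\Omega_m(r)}))|V|$. Following the template of Lemma \ref{quadratic}, I would expand each indicator using $1_X = q^{-1}\sum_\alpha e_q(\alpha P(\cdot))$ and double the free variable $v_m \mapsto (v_m, v_m')$ to control the variance
\[
\mE_{(s|\bar t)\in C_m(X)} \Bigl|\,\mE_{v_m}\prod_{\bo \in \2^{m-1}} 1_X(a+v_m+\bo\cdot \bar t') - q^{-2^{m-1}}\Bigr|^2.
\]
After Fourier expansion, every nonzero tuple of frequencies produces a quadratic/bilinear expression in $(v_m, v_m', \bar t)$ whose rank is bounded below by the rank of the pairing $(\cdot,\cdot)$ attached to $P$, so the off-diagonal Gauss sums contribute $O(q^{-\Omega_m(r)})$. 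A Chebyshev/Markov step then isolates an exceptional set of cubes $c_1$ of density $O(q^{-\Omega_m(r)})$, outside which the single-variable fiber has the claimed size.

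Once the fiber size is essentially uniform, the rest is routine. Since $f_m$ vanishes on $\ep$-a.e.\ $c \in C_m(X)$, Lemma \ref{ae-projection} applied to $p_m$ gives $f_m(c_1) = 0$ for $(O(\ep) + O(q^{-\Omega_m(r)}))$-a.e.\ $(u_m, v_m, \bar w) \in A_m$; the symmetric argument for the partner map $(u_m, v_m, \bar w) \mapsto c_2$ yields $f_m(c_2)=0$ on a set of the same measure, and a union bound delivers both vanishings simultaneously. Choosing $r=r(s,2,m)$ large enough that $q^{-\Omega_m(r)} < q^{-s}$ finishes the proof. The main obstacle is the rank computation inside the doubled count: one must verify that, for every nontrivial frequency vector $(\bar\alpha,\bar\alpha')$, the resulting polynomial $\sum_\bo \alpha_\bo P(a+v_m+\bo\cdot\bar t') + \sum_\bo \alpha'_\bo P(a+v_m'+\bo\cdot\bar t')$ retains rank $\Omega(r)$ after the relevant specializations and averagings. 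This is the direct analogue of the rank bookkeeping hidden in Lemma \ref{quadratic}, and it is precisely where the hypothesis $r > r(s,2,m)$ is consumed.
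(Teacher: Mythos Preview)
Your proposal is correct and follows the same skeleton as the paper: identify the fiber of $p_m$ over a cube $(s|\bar t)\in C_m(X)$ as a one-parameter family in $v_m$, show these fibers have essentially constant size for almost every base cube, and then push forward the $\epsilon$-a.e.\ vanishing of $f_m$ via Lemma~\ref{ae-projection}. Your observation that the $\omega_m=1$ vertices of $c_2$ coincide with those of $c_1$, so that only $2^{m-1}$ conditions remain, is a nice simplification the paper does not make explicit.

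The only real difference is packaging. The paper first rewrites the $P(a+v_m+\nu\cdot(\cdots))=0$ conditions as a system of bilinear equations in $v_m$ with parameters $(x,\bar t)$, namely $(v_m,v_m)=0$, $(v_m,t_j)=0$ for $j<m$, and $(v_m,t_m+x-v_m)=0$, and then invokes Lemma~\ref{quadratic} as a black box to get uniform fiber sizes for $q^{-s}$-a.e.\ base cube. You instead propose to run the Fourier/variance argument directly on the fiber indicator, which is exactly the proof of Lemma~\ref{quadratic} unrolled inline. Both routes land in the same place; the paper's is tidier because the ``rank bookkeeping'' you flag as the main obstacle is already absorbed into Lemma~\ref{quadratic}. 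If you want to match the paper, just translate your $2^{m-1}$ conditions into bilinear form and cite Lemma~\ref{quadratic} rather than redoing its proof. One small point: your variance is written as a conditional expectation over $(s|\bar t)\in C_m(X)$, whereas the mechanism in Lemma~\ref{quadratic} averages over all $\bar t\in V^m$ and then uses Fubini plus the density of the constraint set; be sure to do the latter when you fill in the details.
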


\begin{proof}
Consider the map $p_m:V^{m+2} \to C_m(V)$ defined by  
\[
(u_m, v_m , \bar w) \mapsto (a+u_m|w_1,\ldots, ,w_{m-1}, w_m-u_m).
\] Fix a cube in 
$(x|\bar t) \in C_m(X)$ and consider the intersection $p_m^{-1}((x|\bar t)) \cap A_m$. The proof  of Lemma \ref{A_m} is based on showing that the sizes of these fibers are almost surely of essentially the same size. 

Consider the  intersection $p_m^{-1}((s|\bar t)) \cap A_m$. This is the set  of $u_m, v_m, w_1, \ldots, w_m$ such that
\[
 (a+u_m|w_1,\ldots, ,w_{m-1}, w_m-u_m), (a+v_m|w_1,\ldots, w_{m-1}, w_m-v_m)  \in X
\]
and 
\[
 (a+u_m|w_1,\ldots, ,w_{m-1}, w_m-u_m)=(s| \bar t). 
\]
Namely we are looking for solutions to the system
\[
P(a+v_m+\nu \cdot (w_1,\ldots, w_{m-1}, w_m-v_m))=0;  P(a+u_m+\nu \cdot (w_1,\ldots, w_{m-1}, w_m-u_m))=0; \
 \nu \in \{0,1\}^m\]
given 
\[
a+u_m=s, w_1=t_1, \ldots, w_{m-1}=t_{m-1}, w_m-u_m = w_m-(x-a)= t_m.
\]
Namely this is the set of $v_m$ such that 
\[
P(a+v_m+\nu \cdot (t_1,\ldots, t_{m-1}, t_m+(x-a)-v_m))=0; \ \nu \in \{0,1\}^m,
\]
which is the same as finding solutions to 
\[
P(v_m+\nu \cdot (t_1,\ldots, t_{m-1}, t_m+x-v_m))=0;   \nu \in \{0,1\}^m.
\]
This translates to the system of equations in $v_m$
\[
(v_m,v_m)=0;\  (v_m, t_j)=0,  j=1, \ldots, m-1; \ (v_m, t_m+x-v_m)=0
\]
 By Lemma \ref{quadratic} there exists $C>0$ such that we can choose $r$ so that for $q^{-s}$ a.e. $(x|\bar t) \in C_{m}(X)$ this system has  $(q^{-C}+O(q^{-s}))|V|$ many solutions.  Thus we can choose $r$ so that for  $q^{-s}$ a.e. $(x|\bar t) \in C_m(X)$ we have that
$p_m^{-1}((x|\bar t)) \cap A_m$ is of size $(q^{-C}+q^{-s})|V|$.  
\end{proof}

The rest of the Lemmas in section $4$ follow from Lemma \ref{quadratic} in the same way. \\
\ \\
We turn to the case $d>2$. If $P$ is of degree $d$ then denote 
\[
(v_1, \ldots, v_d) = P_{d}(x|v_1, \ldots, v_d).
\]
This is a multilinear symmetric  form (and is independent of $x$).  For $n \in \mN$ denote $[n]^k_*$ the set 
$\{(n_1, \ldots, n_k): 1 \le n_i \le n,  i<j \implies n_i \le n_j \}$.  

The key Lemma here is the following  analogue of Lemma \ref{quadratic}; Theorem  \ref{testing-X-high} will from Lemma  \ref{general} in the same way:

\begin{lemma}\label{general}  Let $s>0$, Let $m, n \in \mN$. There exists $r=r(s,d, k,m,n)$, such that for any $P$ be of degree $d$ and rank $>r$, any 
Let $J_i \subset  [m]_*^{i} \times [n]_*^{d-i}$ for $i=0, \ldots, d$, the following holds: 
for $p^{-s}$ almost any $\bar t \in V^{m}$ the system in $\bar v \in V^n$ 
\[
(v_{j_1}, \ldots, v_{j_i}, t_{l_1}, \ldots, t_{j_{d-i}}) =0; \ i=0, \ldots, d; \ (j_1, \ldots, j_i,l_1, \ldots, l_{d-i}) \in J_i 
\]
has the $(q^{-C}+O(q^{-s}))|V|$ many solutions, where $C=\sum_{i: J_i \ne \emptyset} |J_i|$.  
\end{lemma}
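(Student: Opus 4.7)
The plan is to follow the Fourier-analytic strategy used in the proof of Lemma \ref{quadratic}, now applied to the symmetric $d$-linear form $\Psi: V^d \to k$ obtained as the full polarization of the top-degree part of $P$ (so that $\Psi(v,\ldots, v) = d!\,P_{top}(v)$). In these terms every equation of the system reads $\Psi(v_{j_1}, \ldots, v_{j_i}, t_{l_1}, \ldots, t_{l_{d-i}}) = 0$, and Fourier inversion expresses the count of $\bar v$ for a fixed $\bar t$ as
\[
q^{-C}\sum_{\bar a \in k^C}\sum_{\bar v \in V^n} e_q\bigl(R_{\bar a}(\bar v, \bar t)\bigr), \qquad R_{\bar a}(\bar v, \bar t) := \sum_{i=0}^d \sum_{I \in J_i} a_I \,\Psi(v_{j_1}, \ldots, v_{j_i}, t_{l_1}, \ldots, t_{l_{d-i}}).
\]
The trivial coefficient $\bar a = \bar 0$ contributes exactly the main term $q^{-C}|V|^n$, so it remains to control the contribution of each nonzero $\bar a$.

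For each such $\bar a$, we need that the set of $\bar t$ with $|\mE_{\bar v} e_q(R_{\bar a}(\bar v, \bar t))| > q^{-s}$ has $|V|^m$-density $\leq q^{-s}/q^C$; a union bound over the $q^C - 1$ nontrivial choices of $\bar a$ then yields the conclusion. By Markov, this reduces to bounding
\[
\mE_{\bar t}\bigl|\mE_{\bar v} e_q(R_{\bar a}(\bar v, \bar t))\bigr|^2 = \mE_{\bar t, \bar v, \bar v'}\, e_q\bigl(R_{\bar a}(\bar v, \bar t) - R_{\bar a}(\bar v', \bar t)\bigr).
\]
Iterated Cauchy--Schwarz differencing in the individual variables $v_j$ and $t_l$ (at most $d$ rounds) reduces this $L^2$ quantity to a bias estimate for a non-trivial multilinear form derived from $\Psi$. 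Since the algebraic rank of $P$ exceeds $r$, the Bhowmick--Lovett theorem (\cite{bl}, Theorem 4.8) guarantees that all such multilinear forms have small Gowers norm, hence bias $\leq q^{-\gamma(r)}$ with $\gamma(r)\to\infty$ as $r\to \infty$. Choosing $r = r(s, d, k, m, n)$ so that $\gamma(r) > 3s + C$ makes the $L^2$-bound $\leq q^{-3s - C}$; applying Lemma \ref{fubini} then converts this into the desired pointwise-almost-everywhere statement.

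The main obstacle is the bookkeeping of the Cauchy--Schwarz differencing step: we must verify that after the appropriate sequence of differencings, the resulting multilinear form is genuinely non-trivial for every choice of nonzero $\bar a$. The potential difficulty is cancellation between the contributions of different index patterns $I$ within the same ``type'' $(i, d-i)$, or between types of different $i$. This can be handled by performing the differencings in a carefully chosen order: grouping the $I$'s by type and first differencing the $v$-variables that appear in as few patterns as possible, one isolates a subset of coefficients that cannot simultaneously vanish when $\bar a \ne 0$, by the linear independence of distinct symmetric tensor components of $\Psi$. The resulting multilinear form in the remaining ``new'' variables thus has a nontrivial top-degree part drawn from $\Psi$, and its bias is controlled by the high-rank hypothesis on $P$, completing the argument.
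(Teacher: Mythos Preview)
Your approach is essentially the same as the paper's: Fourier inversion to isolate the main term $q^{-C}$, then a second-moment argument over $\bar t$ for each nonzero coefficient vector $\bar a$, followed by iterated Cauchy--Schwarz to reduce to a single multilinear bias $\mE_{x_1,\ldots,x_d} e_q(c(x_1,\ldots,x_d))$, which is small by the high-rank hypothesis (the paper invokes this as Theorem~\ref{equi}), and finally Lemma~\ref{fubini} to pass to the a.e.\ statement. The only expository difference is in the Cauchy--Schwarz bookkeeping: the paper first linearizes in each variable (so every surviving term carries a unique set of variable-copies), then isolates any chosen nonzero term by successively differencing in its $d$ variables---this is cleaner than your appeal to ``linear independence of distinct symmetric tensor components,'' which is not quite the right language for what is really a purely combinatorial isolation step.
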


\begin{remark} Lemma \ref{general} holds for any system of polynomials with the same proof for a family of polynomials  $\bar P$ of degree $\le d$ and rank $>r$ (one needs length $j$ multilinear forms for the polynomials of degree $j$ in the collection). 
\end{remark}

 Lemma \ref{general} follows from the following Theorem:
\begin{theorem}[\cite{gt1}, \cite{kl}, \cite{bl}]\label{equi} Let $s>0$. There exits $r=r(s,d,k)$ so that for any $P$ of degree $d$ and rank $>r$ 
\[
|\mE_{x_1, \ldots, x_d} e_q(x_1, \ldots, x_d) | \le q^{-s}.
\] 
 Furthermore $r=r(k,d,s)=r(d,s)$ for all $k$ of char $>d$.
\end{theorem}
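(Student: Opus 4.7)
The plan is to first reduce the theorem to a statement about the Gowers $U_d$-norm of $e_q(P)$, and then prove the contrapositive ``large Gowers norm implies low rank'' by induction on $d$, where the technical heart is an ``integration'' step.

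First, since $P$ has degree exactly $d$, its $d$-fold additive derivative $P_d(x|\bar h) = \sum_{\omega \in \2^d}(-1)^{|\omega|} P(x + \omega \cdot \bar h)$ equals the multilinear form $(h_1, \ldots, h_d)$ and is independent of $x$. Expanding the definition of the Gowers norm and using $\overline{e_q(a)} = e_q(-a)$,
\[
\|e_q(P)\|_{U_d}^{2^d} = \mE_{x, \bar h}\, \prod_{\omega \in \2^d} e_q(P(x + \omega \cdot \bar h))^{(-1)^{|\omega|}} = \mE_{\bar h \in V^d} e_q((h_1, \ldots, h_d)).
\]
Hence it suffices to show: if $\|e_q(P)\|_{U_d}^{2^d} > q^{-s}$, then $\mathrm{rank}(P) \le r(s,d,k)$.

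I would proceed by induction on $d$. The base case $d = 2$ is a classical Gauss-sum estimate: writing $P(x) = B(x,x) + L(x) + c$ with $B$ bilinear of rank $>r$, a direct Gauss-sum evaluation yields $\|e_q(P)\|_{U_2}^{4} \le q^{-\Omega(r)}$. For the inductive step, I use the identity $\|e_q(P)\|_{U_d}^{2^d} = \mE_h \|e_q(\Delta_h P)\|_{U_{d-1}}^{2^{d-1}}$, where $\Delta_h P(x) = P(x+h) - P(x)$ has degree $\le d-1$. A large Gowers norm for $P$ then forces a set $H \subset V$ of density $\ge q^{-s}/2$ on which $\|e_q(\Delta_h P)\|_{U_{d-1}}^{2^{d-1}} \ge q^{-s}/2$. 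By the inductive hypothesis applied at a slightly weaker parameter, for each $h \in H$ we get a decomposition $\Delta_h P = \sum_{i=1}^{r_{d-1}} Q_{h,i} R_{h,i}$ with the $Q_{h,i}, R_{h,i}$ of degree $< d-1$.

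The main obstacle --- and the technical core of the argument --- is assembling these $h$-dependent low-rank decompositions into a single bounded-rank decomposition of $P$ itself (the ``integration'' step). Following Kaufman--Lovett \cite{kl}, one applies a polynomial regularity lemma to enlarge $\{Q_{h,i}, R_{h,i}\}_{h \in H, i}$ to a \emph{fixed} family $\mcF$ of polynomials of degree $< d$ having bounded rank, so that every $Q_{h,i}, R_{h,i}$ becomes measurable with respect to the atoms of $\mcF$; one then exploits the additive compatibility $\Delta_h P = P(x+h) - P(x)$ at the level of these atoms to synthesize a bounded-rank expression for $P$ itself, with the final rank bound expressed as an iterated function of $r_{d-1}$ and $s$. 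In characteristic $> d$, a cleaner route is available via Taylor expansion: one identifies $P$ (up to the invertible scalar $d!$ and lower-degree terms) with its associated symmetric multilinear form $\Phi$, so that $\mathrm{rank}(P)$ corresponds to the partition rank of $\Phi$, and one can relate partition rank directly to $|\mE_{\bar x} e_q(\Phi(\bar x))|$ by a purely algebraic argument (as in \cite{bl}) whose bounds depend only on $d$ and $s$, yielding the refined $r = r(d, s)$ independent of $k$.
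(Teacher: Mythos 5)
The paper does not actually prove Theorem \ref{equi}: it is imported from \cite{gt1}, \cite{kl} and \cite{bl}, and your outline---rewriting the bias of the multilinear form as $\|e_q(P)\|_{U_d}^{2^d}$, proving the contrapositive by induction on $d$ with a Gauss-sum base case, pigeonholing on the derivatives $\Delta_h P$, carrying out the Kaufman--Lovett regularity/``integration'' step, and using the symmetrization route for the field-independent bound in characteristic $>d$---is exactly the strategy of those references, so you are following essentially the same (cited) route as the paper. The one caution is that the step you compress into a single sentence is where all the work lies, including the passage from the ``$P$ is a function of boundedly many lower-degree polynomials'' conclusion that \cite{gt1}, \cite{kl}, \cite{bl} actually deliver to the $P=\sum_j Q_jR_j$ notion of rank in Definition \ref{alg}, which requires an additional (standard, regularity-based) conversion argument.
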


\begin{proof}[Proof of Lemma \ref{general}]
For fixed $\bar t$, the number of solutions is given by
\[
q^{-C} \sum_{0 \le i \le d, \bar b^i \in k^{J_i}} \sum_{v}  e_q(\sum_i \sum_{\bar j \in J_i} \bar b^i_{\bar j}((v_{j_1}, \ldots, v_{j_i}, t_{l_1}, \ldots, t_{j_{d-i}}).
\]
 The contribution to the sum of  $(\bar b^1, \ldots, \bar b^d)= \bar 0$ is $q^{-C}$.
 
 We fix $(\bar b^1, \ldots, \bar b^d) \ne \bar 0$, and evaluate the average
\[
\mE_{\bar t \in V^m}| \mE_{\bar v}  e_q(\sum_{1 \le i \le d, \bar j \in J_i} \bar b^i_{\bar j}((v_{j_1}, \ldots, v_{j_i}, t_{l_1}, \ldots, t_{j_{d-i}}).|^2
\]
which is 
\[
\mE_{\bar t \in V^m}  \mE_{\bar v, \bar v'}  e_q\big(\sum_{1 \le i \le d, \bar j \in J_i} \bar b^i_{\bar j}
((v_{j_1}, \ldots, v_{j_i}, t_{l_1}, \ldots, t_{j_{d-i}}) - ((v'_{j_1}, \ldots, v'_{j_i}, t_{l_1}, \ldots, t_{j_{d-i}})\big).
\]

After repeated applications of Cauchy-Schwartz with an expression of the from 
\[
\mE_{x_1, \ldots, x_d} e_q(c(x_1, \ldots, x_d) )
\] 
for some $c \ne 0$. By Theorem \ref{equi} this is $\le q^{-s}$ for sufficiently large rank $r$.  Now the Lemma follows from  Lemma \ref{fubini}. 

To demonstrate how this is done: choose a variable appearing non trivially. Assume without loss of generality this is $t_1$. 
Let $h$ be the number of appearances. After $h-1$ applications of the Cauchy Schwartz inequality to the above average, isolating only the expressions containing $t_1$, we obtain in the exponent a sum of terms in $t_1,t^1_1, \ldots, t^{h-1}_1, t_2, \ldots, t_m, \bar v, \bar v'$, and the dependence on $t_1$ is linear. We can do this for all variables, so we may assume that at the expense of adding some variables, all terms are in all variables. In particular any term
contains a unique set of parameters, and is linear in each of them. Say for example the term is $(v_1, t_2, \ldots, t_d)$. After $1$ application of Cauchy-Schwartz, isolating the terms containing $v_1$, we are left only with terms that contain $v_1$. Repeating this, after $d-1$ more applications we are left with the terms containing only $v_1,t_2 \ldots, t_d$ but there is a unique such term - $(v_1, t_2, \ldots, t_d)$.  
\end{proof}

Theorem  \ref{testing-X-high} follows from Lemma  \ref{general} in the same way:  as an example we demonstrate the proof of  Lemma \ref{QA_m} for $d>2$.  By Lemma \ref{general} there exists $C=C(m)$ for any $s>0$ there exists $r=r(s,d, L)$ such that  for $q^{-s}$ a.e $\bar t \in V^{m}$ such that $P_i(a+\nu \cdot (t_1,\ldots, t_{m}))=0$, $\nu \in \{0,1\}^m$, $i=1, \ldots, L$ the system  of equations in $v_m$
\[
P_i(a+v_m+\nu \cdot (t_1,\ldots, t_{m}))=0; \ \nu \in \{0,1\}^m, i=1, \ldots, L
\]
has $|V|(q^{-CL}+O(q^{-s}))$ many solutions.

\appendix \label{appendix-solutions}

\section{Subvarieties of bounded degree and codimension in high dimensional vector spaces contain many lines}

Let $k=\mF _q$, $V$ be a $k$-vector space, $N=dim(V)$, and $\mP(V)$ the corresponding projective space. For any subspace $W\subset V$ we have a natural embedding $\mP (W) \ho \mP (V)$.  \\

\begin{definition} A subset  $ Z\subset \mP (V)$ is {\it $D$-large} if $Z\cap \mP (W)\neq \emp$ for any subspace $W$ of $V$ of dimension $> D$.
\end{definition}

\begin{lemma}\label{large1}  $|Z|\geq |\mP (V)|/2q^{D+1}$ for any {\it $D$-large} subset 
$Z\subset \mP (V)$.  
\end{lemma}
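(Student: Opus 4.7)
The plan is to use a double counting argument on the incidences between points of $Z$ and $(D+1)$-dimensional subspaces of $V$. Since $Z$ is $D$-large, every $(D+1)$-dimensional subspace $W$ contributes at least one point to $Z$, which gives a lower bound on the total number of incidences; on the other hand, each individual point of $Z$ lies in a fixed, explicit number of such subspaces, which gives an upper bound in terms of $|Z|$. Comparing these two bounds yields the claim.

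More precisely, let $\mathcal{S}$ denote the set of $(D+1)$-dimensional linear subspaces of $V$, and consider
\[
I = |\{(z,W) : z \in Z,\ W \in \mathcal{S},\ z \in \mathbb{P}(W)\}|.
\]
By $D$-largeness, $I \geq |\mathcal{S}| = \binom{N}{D+1}_q$, where $\binom{\cdot}{\cdot}_q$ denotes the Gaussian binomial. On the other hand, each $z \in \mathbb{P}(V)$ corresponds to a line $L_z \subset V$, and the number of $(D+1)$-dimensional subspaces of $V$ containing $L_z$ equals the number of $D$-dimensional subspaces of $V/L_z$, namely $\binom{N-1}{D}_q$. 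Hence
\[
|Z| \cdot \binom{N-1}{D}_q \geq \binom{N}{D+1}_q.
\]

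The standard Gaussian binomial identity gives
\[
\frac{\binom{N}{D+1}_q}{\binom{N-1}{D}_q} = \frac{q^N-1}{q^{D+1}-1},
\]
so $|Z| \geq (q^N-1)/(q^{D+1}-1)$. Writing $|\mathbb{P}(V)| = (q^N-1)/(q-1)$, the claim reduces to the elementary estimate $(q-1)/(q^{D+1}-1) \geq 1/(2q^{D+1})$, which is immediate for $q \geq 2$. The only step requiring a little thought is the Gaussian binomial ratio; everything else is formal. I do not expect a serious obstacle.
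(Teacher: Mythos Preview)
Your proof is correct and is essentially identical to the paper's: the paper phrases the same double count as the covering statement $Gr_N^D=\bigcup_{l\in Z}Gr_l$, giving $|Gr_N^D|\le |Z|\cdot|Gr_{N-1}^{D-1}|$, and then uses the same Gaussian binomial ratio and the same elementary bound to finish.
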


\begin{proof}
Let $Gr_N^D$ be the set of $D+1$-dimensional subspaces $W$ of $V$. It is well known that $|Gr_N^D| =\binom{N}{D+1}_q= \frac {\prod _{i=0}^d(1-q^{N-i})}{\prod _{i=1}^{D+1}(1-q^i)}$.
This obviously imply the inequality
 $$|Gr_N^D|/|Gr _{N-1}^{D-1} |\geq \frac{q^{N}-1}{q^{D+1}-1} \ge \frac{q^{N}}{2q^{D+1}}\cdot \frac{q}{q-1}$$

For any $l\in \mP (V)$ we denote  by  $Gr_l\subset Gr_N^D$ the set of $D+1$-planes $W$ containing $l$. The size of  $Gr _l$ does not depend on $l$, and is equal to  $|Gr_{N-1}^{D-1}|$. Since $Z$ is {\it $D$-large}, we know that for  any $D$-dimensional subspace $W$ of $V$ there exists $l\in Z$ such that $W\in Gr _l$. So  $Gr _N^D=\bigcup _{l\in Z}Gr _l$ and therefore $ |Gr_N^D| \leq |Z| |Gr _{N-1}^{D-1}| $. So 
$$|Z|\geq |Gr_N^D|/|Gr _{N-1}^{D-1}| \ge \frac{q^N}{2q^{D+1}}\frac{q}{q-1}.$$ 
Since $|\mP (V)|/q^N\leq \frac{q}{q-1}$ the Lemma is proven.
\end{proof}

Let   $V$ be a $k$-vector space,
 $P_i:W\to k,1\leq i\leq s$ be homogeneous  polynomials of  degrees $d_i\ge 1$ and let  $D:=\sum _id_i$.  Let  $\ti Y=\{ v|P_i(v)=0\}, 1\leq i\leq s$. The subset $\ti Y$ of $V$ is homogeneous. We denote by $Y$ the corresponding subset of $\mP (V)$.

\begin{corollary}\label{large} $|Y(k)|\geq |\mP (V)|/2q^{D+1}$.
\end{corollary}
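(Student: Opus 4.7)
The plan is to apply Lemma \ref{large1} directly: it suffices to show that $Y \subset \mP(V)$ is $D$-large, i.e.\ that $Y \cap \mP(W) \neq \emptyset$ for every subspace $W \subset V$ with $\dim W > D$.

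Fix such a $W$. I would prove that the system $P_1 = \cdots = P_s = 0$ admits a nonzero common solution in $W$ by invoking the Chevalley--Warning theorem: since the polynomials $P_i$ have total degrees $d_i$ with $\sum_i d_i = D < \dim W$, the number of common zeros of the $P_i$ in $W$ is divisible by the characteristic $p$ of $k$. The origin is always such a zero (the $P_i$ are homogeneous), so the count is at least $p \geq 2$, hence there exists a nonzero $v \in W$ with $P_i(v) = 0$ for all $i$. The projective class $[v]$ then lies in $Y \cap \mP(W)$, which is therefore nonempty.

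This shows $Y$ is $D$-large, and Lemma \ref{large1} immediately yields $|Y(k)| \geq |\mP(V)|/2q^{D+1}$.

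There is no real obstacle here; the only point to verify is the applicability of Chevalley--Warning, which requires $\sum d_i < \dim W$ — this is exactly the hypothesis $\dim W > D$ built into the definition of $D$-largeness. The whole proof is essentially a one-line reduction of a dimension-counting statement to the standard Chevalley--Warning bound plus the projective counting lemma proved in Lemma \ref{large1}.
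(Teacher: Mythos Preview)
Your proof is correct and follows essentially the same route as the paper: reduce via Lemma \ref{large1} to showing $Y$ is $D$-large, and then produce a nontrivial common zero in any $(D+1)$-dimensional subspace by a Chevalley--Warning type argument. The only cosmetic difference is that the paper cites Ax's refinement \cite{ax} of Chevalley--Warning rather than the classical statement, but either suffices here.
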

\begin{proof}As follows from Lemma \ref{large1} it is sufficient to show that  $Y$ is {\it $D$-large} that is that 
$Y\cap W\neq \{ 0\}$ for any $D$-dimensional subspace $W$ of $V$.
In other words we have to show that there exists a non-zero $w\in W,w\neq 0$ such that $P_i(w)=0, 1\leq i\leq s$. But the existsence of such $w$ follows from Corollary to the main theorem in the end of section $3$ of \cite{ax}.
\end{proof}

Let $P:V\to k$ be a homogeneous polynomial of degree $d \ge 1$ and 
$X:=\{ v|P(v)=0\}$. Fix $x_j \in X \setminus 0$ , $j=1, \ldots, m$ and define 
 $Y'$ as  the set of 
$y\in X$ such that  $y+tx_j\in X(\bar k)$ for all $t\in \bar k$, $j=1, \ldots, m$. 

\begin{lemma}\label{many}
 $|Y'(k)|\geq |\mP (V)(k)|/2q^{md(d+1)/2 +1}$.
\end{lemma}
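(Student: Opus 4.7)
The idea is to realize $Y'$ as the zero set in $V$ of a finite system of homogeneous polynomials whose degrees sum to at most $md(d+1)/2$, and then invoke Corollary \ref{large}.

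First I would expand, for each $j = 1, \dots, m$,
\[ P(y + t x_j) = \sum_{i=0}^{d} t^{i}\, Q_i^{(j)}(y), \]
viewed as a polynomial in $t$. Its coefficients $Q_i^{(j)}(y)$ are polynomials in $y$ (with $x_j$ treated as a parameter), homogeneous of degree $d - i$, since $P$ is homogeneous of degree $d$. The requirement $y + tx_j \in X(\bar k)$ for every $t \in \bar k$ is precisely the identical vanishing of this polynomial in $t$, i.e.\ $Q_i^{(j)}(y) = 0$ for all $0 \le i \le d$.

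Next I would prune the redundant conditions: $Q_0^{(j)}(y) = P(y)$ for every $j$, giving the single condition $y \in X$; and $Q_d^{(j)}(y) = P(x_j) = 0$ automatically by hypothesis on $x_j$. Thus $Y'$ is cut out in $V$ by the homogeneous equations
\[ P(y) = 0, \qquad Q_i^{(j)}(y) = 0 \text{ for } 1 \le i \le d-1,\ 1 \le j \le m, \]
of degrees $d$ and $d-i$ respectively. Summing,
\[ D := d + m\sum_{i=1}^{d-1}(d-i) = d + \frac{md(d-1)}{2} \le \frac{md(d+1)}{2}. \]
If in small characteristic some $Q_i^{(j)}$ vanishes identically, I simply drop it from the system; this only decreases $D$ and so only strengthens the bound.

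Since $P$ is homogeneous, $Y'$ is a cone in $V$, so Corollary \ref{large} applied to its projectivization $Y \subset \mP(V)$ yields $|Y(k)| \ge |\mP(V)(k)|/(2q^{D+1})$. Passing back to the affine cone gives $|Y'(k)| \ge (q-1)|Y(k)|$, and combining with $D+1 \le md(d+1)/2+1$ delivers the desired bound. The only substantive step is verifying that the Taylor-like coefficients $Q_i^{(j)}$ are indeed homogeneous of the claimed degrees and that the system genuinely defines $Y'$; after that, the estimate is a direct application of the combinatorial input of Corollary \ref{large}, itself resting on the Chevalley–Warning-type estimate of \cite{ax}.
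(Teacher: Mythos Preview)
Your proposal is correct and follows essentially the same approach as the paper: expand $P(y+tx_j)$ in powers of $t$, observe that the resulting coefficients are homogeneous in $y$ of the appropriate degrees, and apply Corollary~\ref{large} to the resulting system. The paper does not bother pruning the redundant copies of $P(y)$ (it simply takes all $P_{ij}$ for $0\le i<d$, $1\le j\le m$, giving degree sum exactly $md(d+1)/2$), and it works with the subset $Y\subset Y'$ rather than noting that the equations cut out $Y'$ exactly; your extra pruning and your more careful handling of the projective/affine passage only sharpen or clarify the argument without changing its substance.
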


\begin{proof}Let $D:=
md(d+1)/2 $.
We expand
\[
 P(tx_j+y) =\sum t^iP_{ij}(y)
 \]
 where $P_{ij}(y)$  are homonogeneous polynomials of degree $d-i$. Let
 $Y=\{v| P_{ij}(y)=0\}, 0\leq i<d$, $j=1, \ldots, m$. 
Then  $Y\subset Y'$. By Corollary  \ref{large}  we have $|Y|\geq |V|/2q^{D+1}$.
\end{proof}

Let $P_i :V\to k$, with $i =1, \ldots, c$ be  homogeneous polynomials of degrees $1 \le d_ i \le d$, let  $\bar d=(d_1, \dots ,d_c)$, 
and let  $X:= \bigcap_i \{ v|P_i (v)=0\}$. Fix $x_j \in X \setminus 0$ , $j=1, \ldots, m$ and define 
 $Y'$ as  the set of 
$y\in X$ such that  $y+tx_j\in X(\bar k)$ for all $t\in \bar k$, $j=1, \ldots, m$. 

\begin{lemma}\label{analog}
 $|Y'(k)|\geq |\mP (V)(k)|/q^{O_{\bar d, m}(1)}$.
\end{lemma}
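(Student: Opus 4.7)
The plan is to mimic the proof of Lemma \ref{many} verbatim, now for each of the polynomials $P_i$ in the system. The point is that Corollary \ref{large} is already stated for an arbitrary system of homogeneous polynomials, so once we assemble the right collection of defining equations for (a subset of) $Y'$, the bound follows directly.

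Concretely, for each $i \in \{1,\ldots,c\}$ and each $j \in \{1,\ldots,m\}$, expand
\[
P_i(y+tx_j) \;=\; \sum_{k=0}^{d_i} t^{k} P_{i,j,k}(y),
\]
where $P_{i,j,k}$ is a homogeneous polynomial in $y$ of degree $d_i - k$. Since $x_j \in X$ we have $P_i(x_j)=0$, which forces $P_{i,j,d_i}\equiv 0$, so the only conditions that need to be imposed to guarantee $y+tx_j \in X(\bar k)$ for \emph{all} $t\in\bar k$ are $P_{i,j,k}(y)=0$ for $k=0,1,\ldots,d_i-1$. Note that $P_{i,j,0} = P_i$, so $y\in X$ is automatically included among these conditions.

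Let $Y \subset \mathbb{P}(V)$ be the common zero set of the homogeneous polynomials $\{P_{i,j,k}\}_{1\le i\le c,\ 1\le j\le m,\ 0\le k\le d_i-1}$. By construction $Y \subset Y'$. The sum of the degrees of these defining polynomials is
\[
D \;=\; \sum_{i=1}^{c}\sum_{j=1}^{m}\sum_{k=0}^{d_i-1}(d_i-k) \;=\; m\sum_{i=1}^{c}\frac{d_i(d_i+1)}{2},
\]
which is bounded by a quantity depending only on $\bar d$ and $m$.

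Applying Corollary \ref{large} to the system $\{P_{i,j,k}\}$ gives
\[
|Y(k)| \;\ge\; \frac{|\mathbb{P}(V)(k)|}{2 q^{D+1}},
\]
and hence the same bound for $|Y'(k)|$, which is the desired estimate with $O_{\bar d,m}(1)=D+2$. No step here is a real obstacle; the only thing to verify carefully is that the $k=d_i$ coefficient vanishes (so we genuinely get only finitely many conditions independent of $\mathrm{char}\,k$ and of $|\bar k|$), which is exactly the homogeneity together with the assumption $x_j \in X$.
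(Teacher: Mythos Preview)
Your proof is correct and is exactly the argument the paper has in mind: the paper merely says ``The proof is completely analogous to the proof of Lemma \ref{many},'' and you have spelled out that analogy by expanding each $P_i(y+tx_j)$ into homogeneous components and then invoking Corollary \ref{large} with $D=m\sum_i d_i(d_i+1)/2$.
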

The proof is completely analogous to the proof of Lemma \ref{many}.

\begin{lemma}\label{b} For a family $\bar P =(P_1, \dots ,P_c) $
of polynomials $P_i$ on a $k$-vector space $V$ we define $X_{\bar P}=\{ x\in V|P_i(x)=0\},1\leq i\leq c$.
For any degree vector $\bar d=(d_1, \dots ,d_c)$ there exists a constant 
$C(\bar d)$ such that the following holds. For any finite field $k=\mF _q$, $q>d:=\max _id_i$ 
and a point $x\in X_{\bar P}$ we have $|\mcL (x)|\geq q^{dim(V)-C(\bar d)}$ where 
$\mcL (x)$ is the set of affine lines $L\subset X_{\bar P}$ containing $x$.
\end{lemma}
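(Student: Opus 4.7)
The plan is to parametrize affine lines through $x$ by their direction vectors, convert the line-containment condition into a system of \emph{homogeneous} polynomial equations in the direction variable by exploiting $q>d$, and then invoke Corollary~\ref{large}.

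First I would note that any affine line through $x$ has the form $L_y = \{x+ty : t\in k\}$ for some $y\in V\setminus\{0\}$, and $L_{y}=L_{\lambda y}$ for every $\lambda\in k^\star$. Hence, setting
\[
Y := \{ y\in V : P_i(x+ty)=0 \text{ for all } t\in k,\ 1\le i\le c\},
\]
and letting $\bar Y\subset \mP(V)$ be the image of $Y\setminus\{0\}$, we have $|\mcL(x)| = |\bar Y|$. Since $\deg_t P_i(x+ty)\le d_i\le d<q$, the polynomial $t\mapsto P_i(x+ty)$ vanishes on all of $k$ if and only if it vanishes identically. Expanding
\[
P_i(x+ty) \;=\; \sum_{j=0}^{d_i} t^{\,j}\, Q_{ij}(x,y),
\]
one sees by the multinomial expansion of $(x+ty)^\alpha$ that for each fixed $x$ the coefficient $Q_{ij}(x,\cdot)$ is a linear combination of monomials $y^\beta$ with $|\beta|=j$, hence a \emph{homogeneous} polynomial in $y$ of degree exactly $j$. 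The condition at $j=0$ is $Q_{i,0}(x,y)=P_i(x)=0$, which holds automatically because $x\in X_{\bar P}$.

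Therefore $Y$ is the zero set in $V$ of the homogeneous (in $y$) system $\{Q_{ij}(x,\cdot)=0 : 1\le i\le c,\ 1\le j\le d_i\}$, of degrees summing to $D(\bar d)=\sum_{i=1}^{c} d_i(d_i+1)/2$, a constant depending only on $\bar d$. Applying Corollary~\ref{large} to this homogeneous system yields $|\bar Y|\ge |\mP(V)|/(2q^{D(\bar d)+1})$, and combining with $|\mP(V)|\ge q^{\dim V-1}/2$ gives
\[
|\mcL(x)| \;=\; |\bar Y| \;\ge\; q^{\dim V - C(\bar d)}
\]
for a suitable constant $C(\bar d)$ (e.g.\ $C(\bar d)=D(\bar d)+3$); in the regime $\dim V\le C(\bar d)$ the inequality is trivial after possibly enlarging $C(\bar d)$. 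The only nontrivial step is the homogeneity observation for $Q_{ij}(x,\cdot)$, which is what makes Corollary~\ref{large} (an Ax--Katz type statement for homogeneous systems) applicable even though the original $P_i$ need not be homogeneous.
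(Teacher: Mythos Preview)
Your proposal is correct and essentially matches the paper's argument: both parametrize lines through $x$ by a direction variable, use $q>d$ to turn ``vanishing on the whole line'' into a finite system, reduce to homogeneous equations in the direction, and invoke Corollary~\ref{large}. The only cosmetic difference is that you expand $P_i(x+ty)$ in powers of $t$ and observe the coefficients $Q_{ij}(x,\cdot)$ are already homogeneous in $y$, whereas the paper evaluates at $d+1$ distinct points $a_0,\dots,a_d\in k$ and then decomposes each $P_i(x+a_j v)$ into homogeneous components --- the two systems have the same zero set, and your route is slightly more direct with a cleaner degree count $D(\bar d)=\sum_i d_i(d_i+1)/2$.
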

\begin{proof}Let $a_0,\dots ,a_d$ be distinct elements of $k$. For any $v\in V-0$ denote by  by 
$L_v$ the image of the map $t\to x+tv,t\in k$. It is clear that $L_v\in \mcL$ for any $v\in V-0$ 
such that 
$P_i(x+a_jv)=0,1\leq i\leq c,0\leq j\leq d$.

Define polynomials $P_{i,j}$ on $V$ by 
$P_{i,j}(v)= P_i(x+a_jv)$. Since $P_{i,j} (0)=0$
 we can  write them as sums of homogeneous polynomials 
$P_{i,j} =\sum _{l=1}^{d_i}Q_{i,j}^l$ where $Q_{i,j}^l$ are homogeneous polynomials of degree $l$. Now Lemma \ref{b} follows from Lemma \ref{analog}.
\end{proof}


\end{document}